\documentclass[oneside,10pt]{article}
\usepackage[b5paper]{geometry}	

\usepackage{amsfonts,amsmath,latexsym,amssymb}
\usepackage{mathrsfs,upref}
\usepackage{mathptmx}		    	                            		
\usepackage{amscd}
\usepackage{amsthm}
\usepackage{graphicx}
\usepackage{amsmath}
\usepackage{lipsum}

\newcommand\blfootnote[1]{%
  \begingroup
  \renewcommand\thefootnote{}\footnote{#1}%
  \addtocounter{footnote}{-1}%
  \endgroup}

\newtheorem{theorem}{Theorem}

\newtheorem{corollary}[theorem]{Corollary}

\newtheorem{definition}[theorem]{Definition}

\newtheorem{lemma}[theorem]{Lemma}

\newtheorem{proposition}[theorem]{Proposition}
\newtheorem{remark}[theorem]{Remark}

\begin{document}

\title{On potentials of distributions in Orlicz-Hardy type spaces on the Heisenberg group}
\author{Pablo Rocha}

\maketitle

\begin{abstract}
In this work, we introduce Orlicz-Hardy type spaces and Orlicz-Calder\'on Hardy type spaces on the Heisenberg group $\mathbb{H}^{n}$ and study the relationship between them by means of the Heisenberg sub-Laplacian $\mathcal{L}$. More precisely, we show, under suitable assumptions, that every distribution in the Orlicz-Hardy space $H^{\Phi}(\mathbb{H}^{n})$ can be represented uniquely as the sub-Laplacian of a function in an appropriate Orlicz-Calder\'on Hardy space. In this way, for any $f \in H^{\Phi}(\mathbb{H}^{n})$,  we obtain a uniqueness and solvability result for the equation $\mathcal{L}F=f$.
\end{abstract}

\blfootnote{{\bf Keywords}: Orlicz-Calder\'on Hardy type spaces, Orlicz-Hardy type spaces, atomic decomposition, Heisenberg group, sub-laplacian. \\
{\bf 2020 Mathematics Subject Classification:} 42B25, 42B30, 42B35, 43A80}

\section{Introduction}

On the Heisenberg group $\mathbb{H}^n$, we consider the following inhomogeneous equation
\begin{equation} \label{L problem}
\mathcal{L} F = f,
\end{equation}
where $\mathcal{L}$ is the Heisenberg sub-Laplacian, $f$ is a data distribution on $\mathbb{H}^n$ and $F$ is an unknown function. Recently, the author in \cite{rocha3} proved that if $f \in H^p(\mathbb{H}^n)$ with $Q \, (2 + \frac{Q}{q})^{-1} < p \leq 1$ (where $Q = 2n+2$ and 
$1 < q < \frac{n+1}{n}$), then there exists a unique $F \in \mathcal{H}^{p}_{q, 2}(\mathbb{H}^n)$ what solves (\ref{L problem}). Here, 
$\mathcal{H}^{p}_{q, 2}(\mathbb{H}^n)$ is the Calder\'on-Hardy space on $\mathbb{H}^n$. This problem was also posed in the variable context. In \cite{rocha4}, we solved the equation (\ref{L problem}) for $f \in H^{p(\cdot)}(\mathbb{H}^n)$, and certain variable exponents 
$p(\cdot) : \mathbb{H}^n \to (0, \infty)$. In this work, for an Orlicz function $\Phi$, we introduce the Orlicz-Hardy spaces on the Heisenberg group $H^{\Phi}(\mathbb{H}^n)$ and posed the equation (\ref{L problem}) for $f \in H^{\Phi}(\mathbb{H}^n)$. Naturally, the classical Hardy spaces $H^{p}(\mathbb{H}^n)$, $0 < p < \infty$, are variable Hardy spaces and Orlicz-Hardy spaces. These last two spaces have different nature. That is, $H^{p(\cdot)}(\mathbb{H}^n)$ mixes the value of $p$ according to the position of $z$ and $H^{\Phi}(\mathbb{H}^n)$ mixes the value of $p$ according to the value of $f(z)$. 

The counterpart of the equation (\ref{L problem}) on $\mathbb{R}^n$ was first considered by A. Gatto, J. Jim\'enez and C. Segovia in \cite{segovia}. More precisely, they posed, for $0< p \leq 1$, $m \in \mathbb{N}$ and $f \in H^p(\mathbb{R}^n)$ (see \cite{fefferman}), the equation
\begin{equation} \label{delta problem}
\Delta^m F = f,
\end{equation}
where $\Delta$ is the Laplace operator on $\mathbb{R}^n$. To address this problem, they introduced the Calder\'on-Hardy spaces
$\mathcal{H}^p_{q, \gamma}(\mathbb{R}^n)$, $0 < p \leq 1 < q < \infty$ and $\gamma > 0$, and proved for $n(2m + n/q)^{-1} < p \leq 1$
that given $f \in H^p(\mathbb{R}^n)$ there exists a unique $F \in \mathcal{H}^p_{q, 2m}(\mathbb{R}^n)$ what solves (\ref{delta problem}).

In \cite{Duran}, R. Dur\'an extended the definition of $\mathcal{H}^p_{q, 2m}(\mathbb{R}^n)$ to the case of non-isotropic dilations on 
$\mathbb{R}^n$, solving the problem (\ref{delta problem}) for more general elliptic operators with symbols of the form 
$\xi_1^{2k_1} + \cdot \cdot \cdot + \xi_n^{2k_n}$, with $k_1, ..., k_n \in \mathbb{N}$.

The equation (\ref{delta problem}), for $f \in H^{p(\cdot)}(\mathbb{R}^{n})$ and $f \in H^{p}(\mathbb{R}^{n}, w)$, was studied by the present author in \cite{rocha1} and \cite{rocha2} respectively, obtaining analogous results to those of Gatto, Jim\'enez and Segovia.

Recently, Z. Liu, Z. He and H. Mo in \cite{He} extended the definition of Calder\'on-Hardy spaces to Orlicz setting on $\mathbb{R}^n$ and  solved the equation (\ref{delta problem}) when $f \in H^{\Phi}(\mathbb{R}^{n})$, where $H^{\Phi}(\mathbb{R}^{n})$ are the Orlicz-Hardy spaces defined in \cite{Nakai}.

The purpose of this work is to solve the equation (\ref{L problem}) for $f \in H^{\Phi}(\mathbb{H}^n)$ and $Q \, (2 + \frac{Q}{q})^{-1} < 
i(\Phi) \leq I(\Phi) < \infty$, where $Q= 2n+2$, $1 < q < \frac{n+1}{n}$ and the index $i(\Phi)$ and $I(\Phi)$ are given by (\ref{p menos}) and (\ref{p mas}) below. Once defined the spaces $H^{\Phi}(\mathbb{H}^n)$ and $\mathcal{H}^{\Phi}_{q, 2}(\mathbb{H}^{n})$, we will prove 
that $f$ has an atomic decomposition $f = \sum_j \lambda_j a_j$ in $\mathcal{S}'(\mathbb{H}^n)$. Then, as in \cite{rocha3}, we consider the following potential of $f$,
\[ 
f \ast c_n \rho^{-2n} = \sum_j \lambda_j (a_j \ast c_n \rho^{-2n}),
\]
where "$\ast$" is the convolution in $\mathbb{H}^n$ and $c_n \rho^{-2n}$ is the fundamental solution of $\mathcal{L}$ obtained by G. Folland in \cite{Folland}, and show that a representative for the solution $F \in \mathcal{H}^{\Phi}_{q, 2}(\mathbb{H}^{n})$ of (\ref{L problem}) is 
$\sum_j \lambda_j (a_j \ast_{\mathbb{H}^n} c_n \rho^{-2n})$. On the other hand, the case $0 < I(\Phi) < Q \, (2 + \frac{Q}{q})^{-1}$ 
is trivial. Indeed, we have $\mathcal{H}^{\Phi}_{q, 2}(\mathbb{H}^{n}) = \{ 0\}$, when $0 < I(\Phi) < Q \, (2 + \frac{Q}{q})^{-1}$. 
These results is contained in Theorems \ref{principal thm} and \ref{2nd thm} of Section \ref{main thm}, respectively. 

Although the fundamental solutions for the iterated Heisenberg sub-Laplacian $\mathcal{L}^m$ are known for every integer $m \geq 2$ 
(see \cite{Benson}), the problem $\mathcal{L}^m F = f$ on $\mathbb{H}^n$ is much more complicated. For this reason we focus solely on 
the case $m=1$.

The tools used in this work are the atomic series for elements in $H^{\Phi}(\mathbb{H}^n)$ and the fundamental solution of $\mathcal{L}$ already mentioned, together with the complementary function $\Phi^{*}$, a Fefferman-Stein inequality for the Hardy-Littlewood maximal 
operator, and certain pointwise inequalities established in Section \ref{OCH}, among them, (\ref{N estimate}) is the most important.

Our results apply to the following Orlicz functions:

\

(i) $\Phi(t) = t^p$, with $t \geq 0$ and $0 < p < \infty$. It this case, $L^{\Phi}(\mathbb{H}^n) \equiv L^{p}(\mathbb{H}^n)$.

(ii) $\Phi(t) = t^{p_1} + t^{p_2}$, with $t \geq 0$ and $0 < p_1 \leq p_2 < \infty$. In this case, $L^{\Phi}(\mathbb{H}^n)$ is isomorphic to 
$L^{p_1}(\mathbb{H}^n) \cap L^{p_2}(\mathbb{H}^n)$.

(iii) $\Phi(t) = \min \{ t^{p_1}, t^{p_2} \}$, with $t \geq 0$ and $0 < p_1 \leq p_2 < \infty$. In this case, $L^{\Phi}(\mathbb{H}^n)$ is isomorphic to $L^{p_1}(\mathbb{H}^n) + L^{p_2}(\mathbb{H}^n)$.

(iv) $\Phi(t) = t \log(e + t)$, with $t \geq 0$. In this case, $L^{\Phi}(\mathbb{H}^n)$ is isomorphic to $L \log L (\mathbb{H}^n)$.

\

We observe that the functions (i)-(iv) are all of positive lower and upper type (see Definition \ref{positive type} below). 

\

This paper is organized as follows. In Section \ref{Heisn} we state the basics of the Heisenberg group. The properties of Orlicz functions and Orlicz spaces are presented in Section \ref{LO}. The definition of Orlicz-Hardy spaces on the Heisenberg group as well as its atomic decomposition are presented in Section \ref{OH}. We introduce the Orlicz-Calder\'on Hardy spaces on the Heisenberg group and investigate their properties in Section \ref{OCH}. Finally, our main results are proved in Section \ref{main thm}.

\

\textbf{Notation:} The symbol $S \lesssim T$ stands for the inequality $S \leq c T$ for some constant $c$. The symbol $S \approx T$ 
stands for $T \lesssim S \lesssim T$. For a measurable subset $E\subseteq \mathbb{H}^{n}$ we denote by $\left\vert E\right\vert $ and 
$\chi_{E}$ the Haar measure of $E$ and the characteristic function of $E$ respectively. Given a real number $s \geq 0$, we write 
$\lfloor s \rfloor$ for the integer part of $s$.

Throughout this paper, $C$ will denote a positive constant, not necessarily the same at each occurrence.

\section{The Heisenberg group} \label{Heisn}

The  Heisenberg group $\mathbb{H}^{n}$ can be identified with $\mathbb{R}^{2n} \times \mathbb{R}$ whose group law 
(noncommutative) is given by
\[
(x,t) \cdot (y,s) = \left( x+y, t+s + x^{t} J y \right),
\]
where $J$ is the $2n \times 2n$ skew-symmetric matrix given by
\[
J= 2 \left( \begin{array}{cc}
                           0 & -I_n \\
                           I_n & 0 \\
                                      \end{array} \right)
\]
being $I_n$ the $n \times n$ identity matrix. One can easily check that $e = (0,0)$ is the neutral element, and $(x, t)^{-1}=(-x, -t)$ is the inverse of $(x, t)$.

The Heisenberg group $\mathbb{H}^n$ also admits the following homogeneous dilation: 
\[
\lambda \cdot (x,t) = (\lambda x, \lambda^{2}t), \,\,\,\ \lambda > 0,
\]
which satisfies $\lambda \cdot((x,t) \cdot (y,s)) = (\lambda \cdot(x,t)) \cdot (\lambda \cdot(y,s))$. 

The {\it Koranyi norm} on $\mathbb{H}^{n}$ is the function $\rho : \mathbb{H}^{n} \to [0, \infty)$ defined by
\begin{equation} \label{Koranyi norm}
\rho(z) = \rho(x,t) = \left( |x|^{4} + \, t^{2}  \right)^{1/4}, \,\,\,\, z=(x,t) \in \mathbb{H}^{n},
\end{equation}
where $| \cdot |$ is the usual Euclidean norm on $\mathbb{R}^{2n}$. It is easy to check that for any $z, w \in \mathbb{H}^n$ and $\lambda > 0$, 

\

(i) $\rho(\lambda \cdot z) = \lambda \rho(z)$,

(ii) $\rho(z + w) \leq \rho(z) + \rho(w)$,

(iii) $|x| \leq \rho(x,t)$ and $|t| \leq \rho(x,t)^2$.
\\\\
Moreover, $\rho$ is continuous on $\mathbb{H}^{n}$ and is smooth on $\mathbb{H}^{n} \setminus \{ e \}$. The $\rho$ - ball centered at 
$z_0 \in \mathbb{H}^{n}$ with radius $\delta > 0$ is defined by
\[
B(z_0, \delta) := \{ w \in \mathbb{H}^{n} : \rho(z_0^{-1} \cdot w) < \delta \}.
\]

The topology in $\mathbb{H}^{n}$ induced by the $\rho$ - balls coincides with the Euclidean topology of 
$\mathbb{R}^{2n} \times \mathbb{R} \equiv\mathbb{R}^{2n+1}$ (see \cite[Proposition 3.1.37]{Fischer}). So, the borelian sets of 
$\mathbb{H}^{n}$ are identified with those of $\mathbb{R}^{2n+1}$. The Haar measure in $\mathbb{H}^{n}$ is the Lebesgue measure of 
$\mathbb{R}^{2n+1}$, thus $L^{p}(\mathbb{H}^{n}) \equiv L^{p}(\mathbb{R}^{2n+1})$, for every $0 < p \leq \infty$. Moreover, for 
$f \in L^{1}(\mathbb{H}^{n})$ and for $r > 0$ fixed, we have
\begin{equation} \label{homog dim}
\int_{\mathbb{H}^{n}} f(r \cdot z) \, dz = r^{-Q} \int_{\mathbb{H}^{n}} f(z) \, dz,
\end{equation}
where $Q= 2n+2$. The number $2n+2$ is known as the {\it homogeneous dimension} of $\mathbb{H}^{n}$ (we observe that the {\it topological dimension} of $\mathbb{H}^{n}$ is $2n+1$).

Let $|B(z_0, \delta)|$ be the Haar measure of the $\rho$ - ball $B(z_0, \delta) \subset \mathbb{H}^{n}$. Then, 
\[
|B(z_0, \delta)| = c \delta^{Q},
\]
where $c = |B(e,1)|$ and  $Q = 2n+2$. Given $\lambda > 0$, we put $\lambda B = \lambda B(z_0, \delta) = 
B(z_0, \lambda \delta)$. So $|\lambda B| = \lambda^{Q}	|B|$.

\begin{remark}
For any $z, z_0 \in \mathbb{H}^{n}$ and $\delta >0$, we have
\[
z_0 \cdot B(z, \delta) = B(z_0 \cdot z, \delta).
\]
In particular, $B(z, \delta) = z \cdot B(e, \delta)$. It is also easy to check that $B(e, \delta) = \delta \cdot B(e,1)$ for any $\delta > 0$.
\end{remark}
\begin{remark} \label{cambio de centro}
If $f \in L^{1}(\mathbb{H}^{n})$, then for every $\rho$ - ball $B$ and every $z_0 \in \mathbb{H}^{n}$, we have
\[
\int_{B} f(w) \, dw = \int_{z_{0}^{-1} \cdot B} f(z_0 \cdot u) \, du.
\]
\end{remark}

\begin{definition} \label{radial}
A function $f : \mathbb{H}^n \to \mathbb{C}$ is said to be radial if there exists a function $f_0 : [0, \infty) \to \mathbb{C}$ such that
\[
f(x,t) = f_0(\rho(x,t)), \,\,\,\, \text{for all} \,\, (x,t) \in \mathbb{H}^n. 
\]
\end{definition}

\begin{remark}
We observe that the literature is not unanimous in the use of this terminology. Sometimes, a function $f : \mathbb{H}^n \to \mathbb{C}$ is said to be radial if there exists $f_0 : [0, \infty) \times \mathbb{R} \to \mathbb{C}$ such that $f(x,t) = f_0 (|x|, t)$.
\end{remark}

The Hardy-Littlewood maximal operator $M$ is defined by
\[
Mf(z) = \sup_{B \ni z} |B|^{-1}\int_{B} |f(w)| \, dw,
\]
where $f$ is a locally integrable function on $\mathbb{H}^{n}$ and the supremum is taken over all the $\rho$ - balls $B$ containing $z$.

\

If $f$ and $g$ are measurable functions on $\mathbb{H}^{n}$, their convolution $f * g$ is defined by
\[
(f * g)(z) := \int_{\mathbb{H}^{n}} f(w) g(w^{-1} \cdot z) \, dw,
\]
when the integral is finite.

For every $i = 1,2, ..., 2n+1$, $X_i$ denotes the left invariant vector field given by
\[
X_i = \frac{\partial}{\partial x_i} + 2 x_{i+n} \frac{\partial}{\partial t}, \,\,\,\, i=1, 2, ..., n;
\]
\[
X_{i+n} = \frac{\partial}{\partial x_{i+n}} - 2 x_{i} \frac{\partial}{\partial t}, \,\,\, i=1, 2, ..., n;
\]
and
\[
X_{2n+1} = \frac{\partial}{\partial t}.
\]
The sublaplacian on $\mathbb{H}^n$, denoted by $\mathcal{L}$, is the counterpart of the Laplacain $\Delta$ on $\mathbb{R}^n$. The sublaplacian $\mathcal{L}$ is defined by
\[
\mathcal{L} =- \sum_{i=1}^{2n} X_i^2,
\]
where $X_i$, $i= 1, ..., 2n$, are the left invariant vector fields defined above.

Given a multi-index $I=(i_1,i_2, ..., i_{2n}, i_{2n+1}) \in (\mathbb{N} \cup \{ 0 \})^{2n+1}$, we set
\[
|I| = i_1 + i_2 + \cdot \cdot \cdot + i_{2n} + i_{2n+1}, \hspace{.5cm} d(I) = i_1 + i_2 + \cdot \cdot \cdot + i_{2n} + 2 \, i_{2n+1}.
\]
The amount $|I|$ is called the length of $I$ and $d(I)$ the homogeneous degree of $I$. We adopt the following multi-index notation for
higher order derivatives and for monomials on $\mathbb{H}^{n}$. If $I=(i_1, i_2, ..., i_{2n+1})$ is a multi-index,  
$z = (x,t) = (x_1, ..., x_{2n}, t) \in \mathbb{H}^{n}$ and $X = \{ X_i \}_{i=1}^{2n+1}$, we put
\[
z^{I} := x_{1}^{i_1} \cdot \cdot \cdot x_{2n}^{i_{2n}} \cdot t^{i_{2n+1}} \,\,\,\,\,\, \text{and} \,\,\,\,\,\, 
X^{I} := X_{1}^{i_1} X_{2}^{i_2} \cdot \cdot \cdot X_{2n+1}^{i_{2n+1}}.
\]
A computation give
\[
(r\cdot z)^{I} = r^{d(I)} z^{I} \,\,\,\,\,\, \text{and}  \,\,\,\,\,\, X^{I}(f(r \cdot z)) = r^{d(I)} (X^{I}f)(r\cdot z).
\]
So, the operators $X^{I}$ and the monomials $z^{I}$ are homogeneous of degree $d(I)$. In particular, the 
sublaplacian $\mathcal{L}$ is an operator homogeneous of degree $2$. The operators $X^{I}$, and $\mathcal{L}$  
interact with the convolutions in the following way
\[
X^{I}(f \ast g) = f \ast (X^{I}g), \,\,\,\,\,\, \text{and} \,\,\,\,\,\, \mathcal{L} (f \ast g) = f \ast \mathcal{L}g.
\]

We recall that every polynomial $p$ on $\mathbb{H}^n$ can be written as a unique finite linear combination of the monomials $z^I$, that is
\begin{equation} \label{polynomial}
p(z) = \sum_{I \in \mathbb{N}_{0}^n} c_I \, z^I,
\end{equation}
where all but finitely many of the coefficients $c_I \in \mathbb{C}$ vanish. The \textit{homogeneous degree} of a polynomial $p$ written as 
(\ref{polynomial}) is $\max \{ d(I) : I \in \mathbb{N}_{0}^n \,\, \text{with} \,\, c_I \neq 0 \}$. Let $k \in \mathbb{N} \cup \{ 0 \}$, 
with $\mathcal{P}_{k}$ we denote the subspace formed by all the polynomials of homogeneous degree at 
most $k$. So, every $p \in \mathcal{P}_{k}$ can be written as $p(z) = \sum_{d(I) \leq k} c_I \, z^I$, with $c_I \in \mathbb{C}$.

The Schwartz space $\mathcal{S}(\mathbb{H}^{n})$ is defined by
\[
\mathcal{S}(\mathbb{H}^{n}) = \left\{ \phi \in C^{\infty}(\mathbb{H}^{n}) : \sup_{z \in \mathbb{H}^{n}} (1+\rho(z))^{N} 
|(X^{I} \phi)(z)| < \infty \,\,\, \forall \,\, N \in \mathbb{N}_{0}, \, I \in (\mathbb{N}_{0})^{2n+1}  \right\}.
\]
We topologize the space $\mathcal{S}(\mathbb{H}^{n})$ with the following family of seminorms
\[
\| \phi \|_{\mathcal{S}(\mathbb{H}^{n}), \, N} := \sum_{d(I) \leq N} \sup_{z \in \mathbb{H}^{n}} (1+\rho(z))^{N} |(X^{I} \phi)(z)| \,\,\,\,\,\,\, 
(N \in \mathbb{N}_{0}),
\]
with $\mathcal{S}'(\mathbb{H}^{n})$ we denote the dual space of $\mathcal{S}(\mathbb{H}^{n})$. 

A fundamental solution for the sublaplacian on $\mathbb{H}^n$ was obtained by G. Folland in \cite{Folland}. More precisely, he proved the following result.

\begin{theorem} \label{fund solution}
$c_n \, \rho^{-2n}$ is a fundamental solution for $\mathcal{L}$ with source at $0$, where
\[
\rho(x,t) = (|x|^4 + t^2)^{1/4},
\]
and
\[
c_n = \left[ n(n+2) \int_{\mathbb{H}^n} |x|^2 (\rho(x,t)^4 + 1)^{-(n+4)/2} dxdt \right]^{-1}.
\]
In others words, for any $u \in \mathcal{S}(\mathbb{H}^{n})$, $\left( \mathcal{L}u,  c_n \rho^{-2n} \right) = u(0)$.
\end{theorem}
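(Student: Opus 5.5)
The plan is the classical regularization argument: find an explicit smooth approximation of $\rho^{-2n}$ whose sub-Laplacian can be computed by hand, and let the regularization parameter tend to zero. For $\epsilon>0$ set
\[
g_\epsilon(x,t) = (\rho(x,t)^4 + \epsilon^4)^{-n/4} = \big( (|x|^2+\epsilon^2)^2 + t^2 \big)^{-n/4},
\]
where we first replace $|x|^4$ by $(|x|^2+\epsilon^2)^2$. Any choice works as long as it gives a smooth positive function converging to $\rho^{-2n}$, dominated by it, with a sub-Laplacian of constant sign. Since $\rho^{-2n}$ is locally integrable ($2n<Q$) and bounded at infinity, dominated convergence gives $(\mathcal{L}u, g_\epsilon)\to(\mathcal{L}u,\rho^{-2n})$ for $u\in\mathcal{S}(\mathbb{H}^n)$.

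Because $g_\epsilon$ is smooth, we can integrate by parts to get $(\mathcal{L}u,g_\epsilon)=(u,\mathcal{L}g_\epsilon)$. The vector fields $X_i$ are divergence free, so $X_i^{*}=-X_i$, and the boundary terms vanish since $g_\epsilon$ and its derivatives decay polynomially while $u$ is Schwartz.

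The main computational step is evaluating $\mathcal{L}g_\epsilon$. Write $g_\epsilon=\psi(A,t)$ with $A=|x|^2+\epsilon^2$, so that $g_\epsilon=(A^2+t^2)^{-n/4}$. Using $X_iA=2x_i$, $X_{i+n}A=2x_{i+n}$, $X_i t = 2x_{i+n}$ and $X_{i+n}t=-2x_i$, apply the chain rule twice and sum over $i$. The cross terms should cancel by the skew structure of $J$. What remains is expected to simplify to
\[
\mathcal{L}g_\epsilon = n(n+2)\,\epsilon^2\big((|x|^2+\epsilon^2)^2+t^2\big)^{-(n+4)/4}.
\]
If the exponent comes out differently, the dilation argument below still applies with the corresponding homogeneous profile.

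Write $h(x,t)=n(n+2)\big((|x|^2+1)^2+t^2\big)^{-(n+4)/4}$. Then $\mathcal{L}g_\epsilon(z)=\epsilon^{-Q}h(\epsilon^{-1}\cdot z)$, since $\epsilon^{2}\cdot\epsilon^{-2n-4}=\epsilon^{-Q}$. This is an approximate identity: $h\geq 0$ and $h\in L^1$, because it decays like $\rho^{-2n-4}$ and $2n+4>Q$. By (\ref{homog dim}) and continuity of $u$, $(u,\mathcal{L}g_\epsilon)\to u(0)\int h$.

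It then remains to identify $\int h=c_n^{-1}$. Here I expect the main obstacle, which is reconciling the normalizing integral with the form of $c_n$ in the statement. Folland's version effectively regularizes as $\rho^4+\epsilon^4$, which gives a density involving $|x|^2$, namely $n(n+2)\epsilon^4|x|^2(\rho^4+\epsilon^4)^{-(n+4)/2}$ or a similar expression. So I would redo the computation with $g_\epsilon=(\rho^4+\epsilon^4)^{-n/4}$ exactly as written and check that $\mathcal{L}g_\epsilon$ equals $\epsilon^{-Q}h(\epsilon^{-1}\cdot z)$ with $h=n(n+2)|x|^2(\rho^4+1)^{-(n+4)/2}$. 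The homogeneity count supports this: $\epsilon^{4}\cdot\epsilon^{2}\cdot\epsilon^{-2n-8}=\epsilon^{-Q}$. Then $\int h=c_n^{-1}$ exactly as in the statement. Multiplying by $c_n$ gives $(\mathcal{L}u,c_n\rho^{-2n})=u(0)$.
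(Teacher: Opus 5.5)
Your strategy (regularize $\rho^{-2n}$, move $\mathcal{L}$ onto the regularization, and recognize a dilated approximate identity) is the right one. It is Folland's argument, which the paper only cites and does not reprove. As written, however, the proposal fails, and the one computation that carries the theorem is left as an expectation.

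\textbf{The regularization has the wrong exponent.} $(\rho^4+\epsilon^4)^{-n/4}\to\rho^{-n}$, not $\rho^{-2n}$. Also $\rho^4+\epsilon^4\neq(|x|^2+\epsilon^2)^2+t^2$, so your two formulas describe different functions. This breaks the approximate-identity step. The function $\rho^{-n}$ is not $\mathcal{L}$-harmonic off the origin: for either of your choices, $\mathcal{L}g_\epsilon$ converges away from $0$ to the nonzero function $n^2|x|^2\rho^{-n-4}$, so it does not concentrate at $0$. Your homogeneity count also disagrees with your displayed formula. The function $((|x|^2+\epsilon^2)^2+t^2)^{-(n+4)/4}$ scales like $\epsilon^{-(n+4)}$, so the product is $\epsilon^{-n-2}$, not $\epsilon^{-Q}$. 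You need $g_\epsilon=(\rho^4+\epsilon^4)^{-n/2}$.

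\textbf{The corrected computation.} With that choice the computation is short. For $g=\psi(|x|^2,t)$ and the paper's fields,
\[
\sum_{i=1}^{2n}X_i^2g=4n\psi_r+4|x|^2(\psi_{rr}+\psi_{tt}),
\]
since the terms $\pm 8x_ix_{i+n}\psi_{rt}$ cancel, as you anticipated. Hence
\[
\mathcal{L}g_\epsilon=4n(n+2)\,\epsilon^4|x|^2(\rho^4+\epsilon^4)^{-(n+4)/2}=\epsilon^{-Q}h(\epsilon^{-1}\cdot z),\qquad h=4n(n+2)|x|^2(\rho^4+1)^{-(n+4)/2}.
\]

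\textbf{The factor $4$.} We get $\int h=4c_n^{-1}$, so the argument proves $(\mathcal{L}u,\rho^{-2n})=4c_n^{-1}u(0)$. That is, the fundamental solution of $\mathcal{L}=-\sum_{i=1}^{2n}X_i^2$ is $\frac{c_n}{4}\rho^{-2n}$. The constant in the statement is Folland's, normalized for his operator $-\frac12\sum_j(Z_j\bar Z_j+\bar Z_jZ_j)=-\frac14\sum_{i=1}^{2n}X_i^2$. So your closing claim that $\int h=c_n^{-1}$ ``exactly as in the statement'' is precisely the step you did not verify. Verifying it shows that, with the paper's $\mathcal{L}$, the stated constant must be divided by $4$; the method itself is unaffected.

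\textbf{Your other regularization.} The choice $((|x|^2+\epsilon^2)^2+t^2)^{-n/2}$ also works once the exponent is $-n/2$. It gives
\[
\mathcal{L}g_\epsilon=4n^2\epsilon^2\big((|x|^2+\epsilon^2)^2+t^2\big)^{-(n+2)/2}.
\]
You would then still have to match its mass with the $|x|^2$-integral defining $c_n$, for instance by comparing the limit with the first regularization.
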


\section{Orlicz spaces on the Heisenberg group} \label{LO}

We start this section with some basic notions and results about Orlicz functions and Orlicz spaces on $\mathbb{H}^n$ (see e.g. \cite{Hasto}).

\begin{definition}
A function $\Phi : [0, \infty) \to [0, \infty)$ is called an Orlicz function if

(i) it is non-decreasing and satisfies $\lim_{t \to 0^{+}} \Phi(t) = \Phi(0) = 0$, $\Phi(t) > 0$ for all $t>0$ and 
$\lim_{t \to \infty} \Phi(t) = \infty$;

(ii) the function $z \to \Phi(\vert f(z) \vert)$ is measurable for every measurable function $f$ on $\mathbb{H}^n$.
\end{definition}

\begin{definition}
Let $\Phi$ be an Orlicz function, the Orlicz space $L^{\Phi}(\mathbb{H}^n)$ is defined to be the set of all measurable functions $f$ on 
$\mathbb{H}^n$ such that
\[
\Vert f \Vert_{\Phi} := \inf \left\{ \lambda > 0 : \int_{\mathbb{H}^n} \Phi(\vert f(z) \vert/ \lambda) dz \leq 1 \right\} < \infty. 
\]
The amount $\Vert f \Vert_{\Phi}$ is known as the Luxemburg norm of $f$ with respect to $\Phi$.
\end{definition}

Given an Orlicz function $\Phi$ and any $s \in (0, \infty)$ fixed, we define
\[
\Phi_s(t) := \Phi(t^s), \,\,\,\,  t \in (0, \infty).
\]

\begin{lemma} \label{fits}
Let $\Phi$ be an Orlicz function. Then, for any $s \in (0, \infty)$ and any measurable function $f$, one has
\[
\Vert f \Vert_{\Phi}^{s} = \Vert \vert f \vert^s \Vert_{\Phi_{1/s}}.
\]
\end{lemma}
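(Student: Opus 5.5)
The plan is to unwind both Luxemburg norms and compare the two sets of admissible dilation parameters. For $\lambda>0$ set
\[
A_f := \Big\{ \lambda > 0 : \int_{\mathbb{H}^n} \Phi(|f(z)|/\lambda)\, dz \leq 1 \Big\}, \qquad
B := \Big\{ \mu > 0 : \int_{\mathbb{H}^n} \Phi_{1/s}(|f(z)|^s/\mu)\, dz \leq 1 \Big\}.
\]
By definition $\Vert f \Vert_{\Phi} = \inf A_f$ and $\Vert |f|^s \Vert_{\Phi_{1/s}} = \inf B$. Note that $|f|^s$ is measurable whenever $f$ is. Moreover, $\Phi_{1/s}(t)=\Phi(t^{1/s})$ is again an Orlicz function, since $t\mapsto t^{1/s}$ is an increasing homeomorphism of $[0,\infty)$ fixing $0$. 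Hence the right-hand side is well defined.

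The key step is a pointwise identity. For $\mu>0$,
\[
\Phi_{1/s}\big(|f(z)|^s/\mu\big) = \Phi\Big( \big(|f(z)|^s/\mu\big)^{1/s} \Big) = \Phi\big( |f(z)|/\mu^{1/s} \big).
\]
Integrating over $\mathbb{H}^n$ shows that $\mu \in B$ if and only if $\mu^{1/s} \in A_f$. Equivalently, $B = \{\lambda^s : \lambda \in A_f\}$.

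Since $\lambda\mapsto\lambda^s$ is a strictly increasing continuous bijection of $(0,\infty)$, it commutes with infima. Therefore $\inf B = (\inf A_f)^s$, that is, $\Vert |f|^s \Vert_{\Phi_{1/s}} = \Vert f \Vert_{\Phi}^s$.

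Two degenerate cases need a word, but both are handled by the same bijection. If $A_f=\emptyset$, then $B=\emptyset$ as well, and both sides equal $+\infty$ with the convention $\inf\emptyset=\infty$. If $\inf A_f = 0$, then $\inf B=0$ too, using the convention $0^s=0$.

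There is no real obstacle here. The only point requiring care is keeping the exponent bookkeeping straight: $\Phi_{1/s}(u)=\Phi(u^{1/s})$, not $\Phi(u^s)$. I would also note explicitly that the inf–power interchange uses monotonicity and continuity of $t\mapsto t^s$ on $(0,\infty)$, including the limiting cases $0$ and $\infty$.
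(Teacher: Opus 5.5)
Your proposal is correct and follows the paper's proof: both use the pointwise identity $\Phi_{1/s}(|f|^s/\mu)=\Phi(|f|/\mu^{1/s})$ and then pass the infimum through the increasing bijection $\lambda\mapsto\lambda^s$ of $(0,\infty)$. Your explicit treatment of the degenerate cases ($\inf=0$ or empty admissible set) is a small addition the paper leaves implicit.
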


\begin{proof}
Fix $s \in (0, \infty)$. Then, by definition
\[
\Vert \vert f \vert^s \Vert_{\Phi_{1/s}} = \inf \left\{ \lambda > 0 : \int_{\mathbb{H}^n} \Phi(\vert f(z) \vert/ \lambda^{1/s}) dz \leq 1 \right\} 
\]
\[
= \inf \left\{ \mu^s > 0 : \int_{\mathbb{H}^n} \Phi(\vert f(z) \vert/ \mu) dz \leq 1 \right\} = \Vert f \Vert_{\Phi}^{s}.
\]
Then, the lemma follows.
\end{proof}

\begin{lemma} \label{fi menor uno}
Let $\Phi$ be a continuous Orlicz function. If $0 \neq f \in L^{\Phi}(\mathbb{H}^n)$, then
\[
\int_{\mathbb{H}^n} \Phi \left( \| f \|^{-1}_{\Phi} |f(z)|  \right) dz \leq 1.
\] 
\end{lemma}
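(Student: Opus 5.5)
The plan is to pass to the limit along a sequence of admissible parameters decreasing to the norm, using continuity of $\Phi$ pointwise and Fatou's lemma for the integrals.

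First, set $\lambda_0 := \|f\|_{\Phi}$. Since $f \neq 0$ on a set of positive measure, we have $\lambda_0 > 0$. Indeed, if $\lambda_0 = 0$, there would be admissible $\lambda_k \to 0^+$. Because $\Phi$ is non-decreasing, positive on $(0,\infty)$ and $\Phi(t) \to \infty$ as $t \to \infty$, this would force
\[
\int_{\{|f|>0\}} \Phi(|f(z)|/\lambda_k)\,dz \to \infty
\]
by monotone convergence, which contradicts $\int \Phi(|f|/\lambda_k) \le 1$. Also $\lambda_0 < \infty$ because $f \in L^{\Phi}(\mathbb{H}^n)$.

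Next, by the definition of the infimum, choose a sequence $\lambda_k \downarrow \lambda_0$ with $\lambda_k > \lambda_0$ and
\[
\int_{\mathbb{H}^n} \Phi(|f(z)|/\lambda_k)\,dz \le 1 \quad \text{for every } k.
\]
Such a choice is possible because the admissible set is upward closed: $\Phi$ is non-decreasing, so if $\lambda$ is admissible then every $\lambda' > \lambda$ is too. Now $|f(z)|/\lambda_k \uparrow |f(z)|/\lambda_0$ for a.e. $z$, where $|f(z)| < \infty$ a.e. since $f$ is measurable and finite-valued. Continuity of $\Phi$ then gives
\[
\Phi(|f(z)|/\lambda_k) \to \Phi(|f(z)|/\lambda_0) \quad \text{pointwise a.e.}
\]
Applying Fatou's lemma, or monotone convergence since the sequence increases by monotonicity of $\Phi$, we obtain
\[
\int_{\mathbb{H}^n} \Phi\left(\|f\|_{\Phi}^{-1}|f(z)|\right) dz \le \liminf_{k} \int_{\mathbb{H}^n} \Phi(|f(z)|/\lambda_k)\,dz \le 1,
\]
which is the statement.

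The only delicate point is the use of continuity. Without it, $\Phi$ could jump at the point $|f(z)|/\lambda_0$, and the pointwise limit would be only $\Phi((|f(z)|/\lambda_0)^-)$. This is exactly why the hypothesis that $\Phi$ is continuous is imposed. The positivity $\lambda_0 > 0$ is a minor check, handled as above.
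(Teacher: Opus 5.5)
Your proof is correct and follows the paper's argument: you take admissible parameters decreasing to $\|f\|_{\Phi}$, use continuity of $\Phi$ to get pointwise convergence, and pass to the limit with Fatou's lemma. Your explicit check that $\|f\|_{\Phi} > 0$ fills in a step the paper only asserts.
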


\begin{proof}
From the definition of the amount $\| f \|_{\Phi}$, there exists a positive decreasing sequence $\{ \alpha_j \}$ such that 
$\int_{\mathbb{H}^n} \Phi \left( \alpha^{-1}_j |f(z)|  \right) dz \leq 1$ for all $j$ and $\alpha_j \to \| f \|_{\Phi} > 0$. 
Finally, by Fatou's lemma and the continuity of the Orlicz function $\Phi$, we get
\[
\int_{\mathbb{H}^n} \Phi \left( \| f \|^{-1}_{\Phi} |f(z)|  \right) dz \leq 
\limsup_{j \to \infty} \int_{\mathbb{H}^n} \Phi \left( \alpha^{-1}_j |f(z)|  \right) dz \leq 1.
\]
This finishes the proof.
\end{proof}

\begin{definition} \label{positive type}
An Orlicz function $\Phi : [0, \infty) \to [0, \infty)$ is said to be of positive lower (respectively, positive upper) type $p$ with 
$p \in (0, \infty)$ if there exists a positive constant $C$, depending on $p$, such that, for any $t >0$ and $r \in (0,1]$ (respectively, 
$r \in [1, \infty)$),

\begin{equation} \label{lower upper type}
\Phi(rt) \leq C r^p \Phi(t).
\end{equation}
\end{definition}

\begin{lemma} \label{lower estim}
Let $\Phi$ be an Orlicz function with positive lower type $p_{\Phi}^{-}$ and positive upper type $p_{\Phi}^{+}$. Then,

(i) $\Phi(t) \gtrsim \, t^{p_{\Phi}^{-}}$, if \, $t \geq 1$,

(ii) $\Phi(t) \gtrsim \, t^{p_{\Phi}^{+}}$, if \, $0 < t \leq 1$.
\end{lemma}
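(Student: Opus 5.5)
The proof will be a direct application of Definition \ref{positive type}, choosing the scaling parameter $r$ so that the argument of $\Phi$ on the right-hand side is pinned at $1$. Throughout, $\Phi(1)>0$ is a fixed positive number, since an Orlicz function is positive at every $t>0$.

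For (i), fix $t \geq 1$. I will use the lower type inequality (\ref{lower upper type}) with exponent $p_{\Phi}^{-}$, which holds for $r \in (0,1]$. Take $r = 1/t \in (0,1]$ and apply the inequality at the point $t$. This gives
\[
\Phi(1) = \Phi\!\left(\tfrac{1}{t}\, t\right) \leq C\, t^{-p_{\Phi}^{-}} \Phi(t).
\]
Rearranging yields $\Phi(t) \geq C^{-1}\Phi(1)\, t^{p_{\Phi}^{-}}$, which is (i) with implicit constant $C^{-1}\Phi(1)$.

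For (ii), fix $0<t\leq 1$. Now I use the upper type inequality with exponent $p_{\Phi}^{+}$, which holds for $r \in [1,\infty)$. Take $r = 1/t \geq 1$. Then
\[
\Phi(1) = \Phi\!\left(\tfrac{1}{t}\, t\right) \leq C\, t^{-p_{\Phi}^{+}} \Phi(t),
\]
so $\Phi(t) \geq C^{-1}\Phi(1)\, t^{p_{\Phi}^{+}}$, which is (ii).

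There is no real obstacle here. The only point needing care is to pair the right type with the right range of $t$: lower type when $r\le 1$ (that is, $t\geq1$) and upper type when $r\geq 1$ (that is, $t\le 1$). The implicit constants depend only on $C$ and $\Phi(1)$.
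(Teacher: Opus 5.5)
Your proposal is correct and is exactly the paper's argument: write $\Phi(1)=\Phi(t^{-1}t)$, then apply the lower-type inequality with $r=1/t\le 1$ when $t\geq 1$, and the upper-type inequality with $r=1/t\geq 1$ when $0<t\le 1$. In both cases the implicit constant is $C^{-1}\Phi(1)>0$.
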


\begin{proof}
Write $\Phi(1) = \Phi(t^{-1} t)$ and apply (\ref{lower upper type}) according to the case.
\end{proof}

Given an Orlicz function $\Phi$, for any measurable function $f$ on $\mathbb{H}^n$, we set
\[
\kappa_{\Phi}(f) := \int_{\mathbb{H}^n} \Phi(|f(z)|) dz.
\]

\begin{lemma} \label{modular 1}
Let $\Phi$ be an Orlicz function of positive upper type $p_{\Phi}^{+}$. Then, $f \in L^{\Phi}(\mathbb{H}^n)$ if and only if 
$\kappa_{\Phi}(f) < \infty$.
\end{lemma}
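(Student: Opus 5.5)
We prove the two implications separately, using only the definition of the Luxemburg norm and the upper type inequality (\ref{lower upper type}).

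\emph{The implication $\kappa_{\Phi}(f) < \infty \Rightarrow f \in L^{\Phi}(\mathbb{H}^n)$.} This is where the positive upper type $p_{\Phi}^{+}$ is used. We may assume $\kappa_{\Phi}(f) > 0$, since otherwise $\Phi(|f|)=0$ a.e., so $f=0$ a.e. and $\|f\|_{\Phi}=0$. Take $\lambda \geq 1$ and apply (\ref{lower upper type}) pointwise with $t = |f(z)|/\lambda$ and $r = \lambda \in [1,\infty)$. This gives $\Phi(|f(z)|) \leq C \lambda^{p_{\Phi}^{+}} \Phi(|f(z)|/\lambda)$. That is the wrong direction, so we instead look for a bound on $\Phi(|f|/\lambda)$ from above.

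The right comparison comes from writing $|f(z)|/\lambda$ in the form $r t$. If we rewrite the upper type condition as a decay statement for $\lambda \geq 1$, it only controls growth: $\Phi(\lambda t) \leq C\lambda^{p}\Phi(t)$. The lower type would give $\Phi(|f|/\lambda) \leq C\lambda^{-p^-}\Phi(|f|)$ directly, but the lemma assumes only the upper type. So we use the upper type in the form $\Phi(s) = \Phi(\lambda \cdot s/\lambda) \leq C\lambda^{p^+}\Phi(s/\lambda)$. This also gives only a lower bound for $\Phi(s/\lambda)$.

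Consequently the decay of $\int\Phi(|f|/\lambda)$ as $\lambda\to\infty$ has to come from elsewhere. We use monotonicity and dominated convergence: for $\lambda\ge1$, $\Phi(|f|/\lambda)\le\Phi(|f|)\in L^1$, and $\Phi(|f(z)|/\lambda)\to \Phi(0^+)=0$ pointwise, because $\lim_{t\to0^+}\Phi(t)=0$ and $|f(z)|<\infty$ a.e. (finiteness of $f$ a.e. is part of $f$ being measurable real/complex-valued). Hence $\int\Phi(|f|/\lambda)\,dz\le 1$ for some large $\lambda$, and $\|f\|_{\Phi}<\infty$. This direction therefore needs no type assumption at all.

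\emph{The implication $f \in L^{\Phi}(\mathbb{H}^n) \Rightarrow \kappa_{\Phi}(f) < \infty$.} This is where the upper type is genuinely needed. If $\|f\|_{\Phi}<\infty$, pick $\lambda$ with $\|f\|_{\Phi} < \lambda$ and $\lambda \geq 1$. By the definition of the infimum, some $\mu\le\lambda$ satisfies $\int\Phi(|f|/\mu)\le1$. Since $\Phi$ is non-decreasing, $\int\Phi(|f|/\lambda)\le1$. Now apply (\ref{lower upper type}) with $r=\lambda\ge1$ and $t=|f(z)|/\lambda$:
\[
\Phi(|f(z)|)\le C\lambda^{p_{\Phi}^{+}}\Phi(|f(z)|/\lambda).
\]
Integrating gives $\kappa_{\Phi}(f)\le C\lambda^{p_{\Phi}^{+}}<\infty$.

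The only subtle points are handling the infimum, which monotonicity of $\Phi$ resolves, and arranging $\lambda\ge1$ so that the upper type applies; neither should pose real difficulty.
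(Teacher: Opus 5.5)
Your proof is correct and follows the paper's argument. The implication $f\in L^{\Phi}(\mathbb{H}^n)\Rightarrow\kappa_{\Phi}(f)<\infty$ is exactly the paper's: pick $\lambda\ge 1$ with $\kappa_{\Phi}(f/\lambda)\le 1$ and apply the upper type with $r=\lambda$. Your dominated convergence step with dominant $\Phi(|f|)$ is the paper's appeal to monotone convergence and uses no type assumption. The only thing to fix is presentation: delete the abandoned false starts at the beginning of the first implication, including the incorrect opening claim that the upper type is used there.
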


\begin{proof}
Assume that $f \neq 0$. If $\kappa_{\Phi}(f) < \infty$, by the monotone convergence theorem, we obtain $f \in L^{\Phi}(\mathbb{H}^n)$. Now, if 
$f \in L^{\Phi}(\mathbb{H}^n)$, there exists $\lambda > 1$ such that $\kappa_{\Phi}(f/\lambda) \leq 1$. Then, (\ref{lower upper type}) leads to
\[
\kappa_{\Phi}(f) = \kappa_{\Phi}(\lambda f/ \lambda) \leq C \lambda^{p_{\Phi}^{+}} \kappa_{\Phi}(f/\lambda) \leq C \lambda^{p_{\Phi}^{+}} < \infty. 
\]
This concludes the proof.
\end{proof}

\begin{lemma} \label{modular 2}
Let $\Phi$ be an Orlicz function of positive upper type $p_{\Phi}^{+}$. If $\{ f_j \}$ is a sequence of measurable functions on 
$\mathbb{H}^n$ such that $\kappa_{\Phi}(f_j) \to 0$, then $\| f_j \|_{\Phi} \to 0$.
\end{lemma}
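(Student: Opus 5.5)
We use the positive upper type property to show that, once $\kappa_{\Phi}(f_j)$ is small, a fixed small $\lambda$ is admissible in the definition of the Luxemburg norm. Fix $\varepsilon \in (0,1]$; it suffices to find $j_0$ such that $\| f_j \|_{\Phi} \leq \varepsilon$ for all $j \geq j_0$. By definition of $\| \cdot \|_{\Phi}$, this holds as soon as
\[
\int_{\mathbb{H}^n} \Phi(|f_j(z)|/\varepsilon) \, dz \leq 1 .
\]

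Apply (\ref{lower upper type}) with $r = \varepsilon^{-1} \in [1, \infty)$ and $t = |f_j(z)|$. This gives, pointwise, $\Phi(|f_j(z)|/\varepsilon) \leq C \varepsilon^{-p_{\Phi}^{+}} \Phi(|f_j(z)|)$. Points where $f_j(z) = 0$ are harmless, since $\Phi(0)=0$. Integrating over $\mathbb{H}^n$ yields
\[
\kappa_{\Phi}(f_j/\varepsilon) \leq C \varepsilon^{-p_{\Phi}^{+}} \kappa_{\Phi}(f_j).
\]

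Since $\kappa_{\Phi}(f_j) \to 0$, there is $j_0$ with $\kappa_{\Phi}(f_j) \leq C^{-1} \varepsilon^{p_{\Phi}^{+}}$ for all $j \geq j_0$. For such $j$ we get $\kappa_{\Phi}(f_j/\varepsilon) \leq 1$, hence $\| f_j \|_{\Phi} \leq \varepsilon$. As $\varepsilon$ was arbitrary, $\| f_j \|_{\Phi} \to 0$.

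There is no real obstacle here; the only point to check is that the type inequality is applied with $r \geq 1$, which is the upper-type regime, and this is exactly why we restrict to $\varepsilon \leq 1$. No continuity of $\Phi$ is needed, because we only exhibit one admissible $\lambda = \varepsilon$ rather than evaluate the modular at $\lambda = \| f_j \|_{\Phi}$ itself.
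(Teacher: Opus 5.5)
Your proof is correct and is essentially the paper's argument: both apply the upper-type inequality with a scaling factor $r \geq 1$ and read off an admissible $\lambda$ in the definition of the Luxemburg norm. The only difference is that the paper takes $\lambda = (C\kappa_{\Phi}(f_j))^{1/p_{\Phi}^{+}}$, which gives the explicit bound $\| f_j \|_{\Phi} \leq (C\kappa_{\Phi}(f_j))^{1/p_{\Phi}^{+}}$ whenever $C\kappa_{\Phi}(f_j) \leq 1$, whereas you fix $\lambda = \varepsilon$; your choice also covers the case $\kappa_{\Phi}(f_j) = 0$, which the paper sets aside.
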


\begin{proof}
Suppose that $0< \kappa_{\Phi}(f_j) \to 0$. Given $0 < \epsilon < 1$ and $C > 0$ as in (\ref{lower upper type}), for all sufficiently 
large $j$, we have $0 < C \kappa_{\Phi}(f_j) < \epsilon^{p_{\Phi}^{+}}$ and
\[
\kappa_{\Phi} \left( C^{-1/p_{\Phi}^{+}} \kappa_{\Phi}(f_j)^{-1/p_{\Phi}^{+}} f_j \right)  \leq \kappa_{\Phi}(f_j)^{-1} \kappa_{\Phi}(f_j) = 1, 
\]
so $\| f_j \|_{\Phi} \leq C^{1/p_{\Phi}^{+}} \kappa_{\Phi}(f_j)^{1/p_{\Phi}^{+}} < \epsilon$. Then, $\| f_j \|_{\Phi} \to 0$.
\end{proof}

\begin{lemma} \label{lower upper fits}
Assume that $s \in (0, \infty)$. If $\Phi$ is an Orlicz function of positive lower (resp., positive upper) type $p_{\Phi}^{-}$ 
(resp., type $p_{\Phi}^{+}$),  then $\Phi_s$ is of positive lower (resp., positive upper) type  $s p_{\Phi}^{-}$ 
(resp., type $s p_{\Phi}^{+}$). 
\end{lemma}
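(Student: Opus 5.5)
The plan is to verify the defining inequality (\ref{lower upper type}) for $\Phi_s$ directly, by transferring the dilation factor through the power map $t \mapsto t^s$. This map is a bijection of $(0,\infty)$ onto itself. It also maps $(0,1]$ onto $(0,1]$ and $[1,\infty)$ onto $[1,\infty)$.

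First I would record that $\Phi_s$ is itself an Orlicz function. It is non-decreasing because $t \mapsto t^s$ is increasing. It satisfies $\Phi_s(0)=0$, $\Phi_s(t)>0$ for $t>0$, $\lim_{t\to 0^+}\Phi_s(t)=0$ and $\lim_{t\to\infty}\Phi_s(t)=\infty$. Measurability of $z \mapsto \Phi_s(|f(z)|)=\Phi(|\,|f(z)|^s|)$ follows from the measurability of $|f|^s$.

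Next, for the lower type, fix $t>0$ and $r\in(0,1]$. Then $r^s\in(0,1]$, so applying (\ref{lower upper type}) to $\Phi$ with the pair $(r^s, t^s)$ gives
\[
\Phi_s(rt)=\Phi(r^s t^s)\le C\,(r^s)^{p_{\Phi}^{-}}\Phi(t^s)=C\,r^{s p_{\Phi}^{-}}\Phi_s(t).
\]
This is the lower type inequality with exponent $s p_{\Phi}^{-}$ and the same constant $C$.

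The upper type case is identical. For $r\ge 1$ we have $r^s\ge 1$, and the same substitution yields $\Phi_s(rt)\le C\,r^{s p_{\Phi}^{+}}\Phi_s(t)$.

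There is no genuine obstacle here. The only point to check is that $r^s$ stays in the correct range, $(0,1]$ or $[1,\infty)$, which holds because $s>0$.
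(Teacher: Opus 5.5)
Your proposal is correct and matches the paper, which simply says the lemma "follows immediately from the definitions." Your substitution $\Phi_s(rt)=\Phi(r^s t^s)\le C\,r^{s p_{\Phi}^{\pm}}\Phi_s(t)$, using that $r^s$ stays in $(0,1]$ or $[1,\infty)$, is exactly that immediate verification.
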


\begin{proof}
It follows immediately from the definitions.
\end{proof}

\begin{remark}
Writing $\Phi(t) = \Phi(r r^{-1} t)$ with $r \in (0,1)$, it is easy to check that if an Orlicz function 
$\Phi$ is both of positive lower type $p_{\Phi}^{-}$ and of positive upper type $p_{\Phi}^{+}$, then $p_{\Phi}^{-} \leq p_{\Phi}^{+}$. 
Moreover, if $\Phi$ is of positive lower (resp., positive upper) type $p_{\Phi}^{-}$ 
(resp., type $p_{\Phi}^{+}$), then it also is of positive lower (resp., positive upper) type $p$ for any $p \in (0, p_{\Phi}^{-})$ 
(resp., type $p$ for any $p \in (p_{\Phi}^{+}, \infty)$).
\end{remark}

This remark leads to the following definition. Given an Orlicz function $\Phi$, as in \cite[Section 2.1]{Ky}, define
\begin{equation} \label{p menos}
i(\Phi) := \sup \{ p_{\Phi}^{-} : \Phi \,\, \text{is of positive lower type} \,\, p_{\Phi}^{-} \},
\end{equation}
and
\begin{equation} \label{p mas}
I(\Phi) := \inf \{ p_{\Phi}^{+} : \Phi \,\, \text{is of positive upper type} \,\, p_{\Phi}^{+} \}.
\end{equation}

The following result states that the couple $\left(L^{\Phi}(\mathbb{H}^n), \Vert \cdot \Vert_{\Phi} \right)$ is a quasi-normed space 
when $\Phi$ is an Orlicz function of positive lower type $p_{\Phi}^{-}$ and of positive upper type $p_{\Phi}^{+}$. In particular, if 
$\Phi(t) = t^p$, $0 < p < \infty$, then $L^{\Phi}(\mathbb{H}^n) = L^p(\mathbb{H}^n)$ with quasi-norm coincidence.

\begin{proposition} \label{quasi-norm}
Let $\Phi$ be an Orlicz function with positive lower type $p_{\Phi}^{-}$ and positive upper type $p_{\Phi}^{+}$, then for any 
$f, g : \mathbb{H}^n \to \mathbb{C}$ belonging to $L^{\Phi}(\mathbb{H}^n)$ and $\alpha \in \mathbb{C}$,

(i) $\Vert f \Vert_{\Phi} \geq 0$ and $\Vert f \Vert_{\Phi} = 0$ if and only if $f(z) = 0$ a.e. $z$;

(ii) $\Vert \alpha f \Vert_{\Phi} = \vert \alpha \vert \Vert f \Vert_{\Phi}$;

(iii) $\Vert f + g \Vert_{\Phi} \leq K (\Vert f \Vert_{\Phi} + \Vert g \Vert_{\Phi})$, where $K \geq 1$ and does not depend on $f$ and $g$;

(iv) If $0 \leq f \leq g$ a.e., then $\| f \|_{\Phi} \leq \| g \|_{\Phi}$;

(v) If $\Phi$ is continuous and $0 \leq f_n \uparrow f$ a.e., then $\| f_n \|_{{\Phi}} \uparrow \| f \|_{{\Phi}}$;

(vi) If $\Phi$ is bijective and $E \subset \mathbb{H}^n$ is measurable with $|E| < \infty$, then 
$\| \chi_E  \|_{\Phi} = \frac{1}{\Phi^{-1}(|E|^{-1})}$.
\end{proposition}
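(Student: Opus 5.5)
We verify the six items directly from the definition of the Luxemburg norm. The main tools are the type inequalities (\ref{lower upper type}), Lemma \ref{fi menor uno}, and the fact that $\Phi$ is non-decreasing.

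For (i), nonnegativity is immediate. If $f=0$ a.e., then $\kappa_\Phi(f/\lambda)=0$ for every $\lambda>0$, so $\|f\|_\Phi=0$. Conversely, assume $\|f\|_\Phi=0$. Then for every $\lambda>0$ we have $\int\Phi(|f|/\lambda)\le 1$. Fix $\epsilon>0$ and set $E_\epsilon=\{|f|>\epsilon\}$. On $E_\epsilon$ the integrand is at least $\Phi(\epsilon/\lambda)$, so
\[
|E_\epsilon|\,\Phi(\epsilon/\lambda)\le 1 .
\]
Since $\Phi(t)\to\infty$ as $t\to\infty$, letting $\lambda\to0$ gives $|E_\epsilon|=0$. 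Hence $f=0$ a.e.

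For (ii), take $\alpha\neq0$. The change of variables $\lambda=|\alpha|\mu$ turns the defining set for $\|\alpha f\|_\Phi$ into $|\alpha|$ times the defining set for $\|f\|_\Phi$. This gives homogeneity.

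For (iv), if $0\le f\le g$, monotonicity of $\Phi$ shows that every admissible $\lambda$ for $g$ is admissible for $f$.

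For (iii), which is the main step, the plan is the standard convexity substitute. We may assume $\|f\|_\Phi,\|g\|_\Phi>0$. Take $\lambda_1>\|f\|_\Phi$ and $\lambda_2>\|g\|_\Phi$, so that $\kappa_\Phi(f/\lambda_1)\le1$ and $\kappa_\Phi(g/\lambda_2)\le1$ (admissible sets are up-closed by monotonicity). Put $\lambda=\lambda_1+\lambda_2$. Pointwise,
\[
|f+g|/\lambda\le 2\max(|f|/\lambda_1,\,|g|/\lambda_2).
\]
Monotonicity and the upper type with $r=2$ then give
\[
\Phi(|f+g|/\lambda)\le C2^{p^+}\bigl(\Phi(|f|/\lambda_1)+\Phi(|g|/\lambda_2)\bigr),
\]
hence $\kappa_\Phi((f+g)/\lambda)\le 2C2^{p^+}=:A$. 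If $A>1$, we enlarge $\lambda$ to $A^{1/p^-}\lambda$. The lower type with $r=A^{-1/p^-}\in(0,1]$ gives
\[
\Phi(r s)\le C r^{p^-}\Phi(s)=CA^{-1}\Phi(s),
\]
so the modular becomes $\le C$. We repeat the scaling with $C^{1/p^-}$ if $C>1$. Altogether $\|f+g\|_\Phi\le K(\lambda_1+\lambda_2)$ with $K$ depending only on $C$ and $p^\pm$. Letting $\lambda_i$ decrease to the norms gives (iii). The only care needed is the bookkeeping of the constants, which is where I expect the main friction.

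For (v), monotonicity gives that $\|f_n\|_\Phi$ is non-decreasing and bounded by $\|f\|_\Phi$. Let $L=\lim\|f_n\|_\Phi$; if $L=\infty$ there is nothing to prove. Otherwise, for $\lambda>L$ we have $\kappa_\Phi(f_n/\lambda)\le1$ for all $n$. By continuity of $\Phi$ and monotone convergence, $\kappa_\Phi(f/\lambda)\le1$, so $\|f\|_\Phi\le\lambda$. Letting $\lambda\downarrow L$ gives $\|f\|_\Phi\le L$.

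For (vi), since $\kappa_\Phi(\chi_E/\lambda)=|E|\,\Phi(1/\lambda)$, the condition $\kappa_\Phi(\chi_E/\lambda)\le1$ reads $\Phi(1/\lambda)\le|E|^{-1}$. Because $\Phi$ is a non-decreasing bijection, hence strictly increasing, this is equivalent to $\lambda\ge 1/\Phi^{-1}(|E|^{-1})$. The infimum of such $\lambda$ gives the formula; the case $|E|=0$ is trivial with the convention $\Phi^{-1}(\infty)=\infty$.
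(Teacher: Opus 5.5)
Your arguments for (i), (ii), (iv), (v) and (vi) are correct. They are the unpacked versions of the one-line justifications in the paper. The only difference is in (i): the paper appeals to the lower type, while you use $\lim_{t\to\infty}\Phi(t)=\infty$, which is already part of the definition of an Orlicz function, so your version is slightly more economical.

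For (iii) the paper gives no argument and simply cites Lemma 2.3 of Zhang, so your direct proof is a self-contained alternative. Its final rescaling step, however, is wrong as written. After the first rescaling you have $\kappa_{\Phi}((f+g)/(A^{1/p^{-}}\lambda))\le C$. Applying the lower-type inequality a second time with $r=C^{-1/p^{-}}$ gives the bound $C\cdot C^{-1}\cdot C=C$ again. The constant $C$ is paid anew every time the type inequality is invoked, so iterating never brings the modular below $C$.

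The fix is to apply the lower type once, directly to $\kappa_{\Phi}((f+g)/\lambda)\le A$, with $r=(CA)^{-1/p^{-}}\in(0,1]$. This gives $\kappa_{\Phi}((f+g)/((CA)^{1/p^{-}}\lambda))\le C\,(CA)^{-1}A=1$, so $K=(CA)^{1/p^{-}}$ works. This is numerically the product of your two factors, so your final constant survives; only the justification has to be a single application of the type inequality.

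You can also drop the upper type in (iii) altogether. The quantity $(|f|+|g|)/(\lambda_1+\lambda_2)$ is a convex combination of $|f|/\lambda_1$ and $|g|/\lambda_2$, so it is at most their maximum. Monotonicity of $\Phi$ then gives $\kappa_{\Phi}((f+g)/(\lambda_1+\lambda_2))\le 2$ directly, with no factor $2^{p^{+}}$. One application of the lower type then yields $K=(2C)^{1/p^{-}}$. So the quasi-triangle inequality needs only the positive lower type.
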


\begin{proof} The first part of (i) is obvious, the second one it follows from that $\Phi$ is an Orlicz function of positive lower 
type $p_{\Phi}^{-}$. From the definition of $\Vert \cdot \Vert_{\Phi}$ follows (ii) and (vi). Now, (iii) is consequence of 
\cite[Lemma 2.3]{Zhang}. As $\Phi$ is non-decreasing, (iv) follows. Finally, (v) follows from monotone convergence Theorem.
\end{proof}

\begin{definition}
Two Orlicz functions $\Phi$ and $\Psi$ are called equivalent, denoted by $\Phi \sim \Psi$, if there exist constants $C_0 \geq 1$ and 
$C \geq 1$ such that
\[
C_{0}^{-1}\Phi(t/C) \leq \Psi(t) \leq C_0 \Phi(Ct),
\]
for all $t \geq 0$.
\end{definition}

\begin{remark} \label{cont and crec}
It is clear that if $\Phi \sim \Psi$, then $L^{\Phi} = L^{\Psi}$ with equivalent Luxemburg norms. We observe that all our results are 
invariant under the change of equivalent Orlicz functions. Moreover, equivalent Orlicz functions have the same positive lower and upper type numbers. By \cite[Lemma 2.5]{Zhang}, without loss of generality, we may always assume that an Orlicz function $\Phi$ of positive lower type $p_{\Phi}^{-}$ and positive upper type $p_{\Phi}^{+}$ is continuous and strictly increasing.
\end{remark}

\begin{corollary}
Let $\Phi$ be an Orlicz function with positive lower type $p_{\Phi}^{-}$ and positive upper type $p_{\Phi}^{+}$, then there exists an Orlicz function $\Psi$ equivalent to $\Phi$ such that $L^{\Psi}(\mathbb{H}^n)$ is a quasi-Banach function space.
\end{corollary}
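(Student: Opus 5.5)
By Remark \ref{cont and crec} (that is, \cite[Lemma 2.5]{Zhang}), I take $\Psi \sim \Phi$ continuous and strictly increasing, with the same positive lower type $p_{\Phi}^{-}$ and positive upper type $p_{\Phi}^{+}$. Since equivalent Orlicz functions give the same space with equivalent Luxemburg quasi-norms, it is enough to check the quasi-Banach function space axioms for $\left(L^{\Psi}(\mathbb{H}^n), \Vert \cdot \Vert_{\Psi}\right)$.

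The first step is the structural axioms, which come from Proposition \ref{quasi-norm} applied to $\Psi$.
\begin{itemize}
\item Parts (i)--(iii) give a quasi-norm with constant $K$.
\item Part (iv) gives the lattice property.
\item Part (v) gives the Fatou property $0 \leq f_n \uparrow f \Rightarrow \Vert f_n\Vert_{\Psi} \uparrow \Vert f \Vert_{\Psi}$, because $\Psi$ is continuous.
\item Since $\Psi$ is continuous, strictly increasing, $\Psi(0)=0$ and $\Psi(t)\to\infty$, it is bijective, so part (vi) gives $\Vert \chi_E \Vert_{\Psi} = 1/\Psi^{-1}(|E|^{-1}) < \infty$ for $|E|<\infty$.
\end{itemize}
If the definition also demands local integrability-type control $\int_E |f| \lesssim_E \Vert f\Vert_\Psi$, I would not expect it to hold in general when $p_\Phi^- <1$. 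In that case I would use the weaker quasi-Banach function space axioms (lattice, Fatou, characteristic functions of finite-measure sets), as in the variable exponent literature.

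The second step, and the main obstacle, is completeness. I would use the Aoki--Rolewicz trick. Set $s = \min\{1, p_{\Phi}^{-}\}$ (or $s$ slightly smaller). By Lemma \ref{fits} and Lemma \ref{lower upper fits}, $\Vert |f|^{s}\Vert_{\Psi_{1/s}} = \Vert f\Vert_{\Psi}^{s}$, and $\Psi_{1/s}$ has lower type $p_\Phi^-/s \ge 1$.

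I would then aim to show $\Vert f+g\Vert_\Psi^{s} \leq C(\Vert f\Vert_\Psi^{s}+\Vert g\Vert_\Psi^{s})$ for countable sums, in the form
\[
\Big\Vert \sum_j |f_j| \Big\Vert_{\Psi}^{s} \leq C \sum_j \Vert f_j \Vert_{\Psi}^{s}.
\]
If the lower-type convexification is hard to make exact, I would replace this with the general fact that any quasi-norm is equivalent to an $s$-norm with $(2K)^{s}=2$.

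Given this inequality, I take a Cauchy sequence, extract a subsequence with $\Vert f_{k+1}-f_k\Vert_\Psi^{s} \leq 2^{-k}$, and put $g = |f_1| + \sum_k |f_{k+1}-f_k|$. Fatou (part (v)) gives $\Vert g\Vert_\Psi < \infty$, so by part (i) $g$ is finite a.e. Hence $f_k \to f$ a.e. Applying part (v) to the tails shows $\Vert f - f_k \Vert_\Psi \to 0$, and then the full sequence converges because it is Cauchy. This proves completeness and the corollary.
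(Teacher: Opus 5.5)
Your proof is correct, and its first step is exactly the paper's proof: replace $\Phi$ by a continuous, strictly increasing (hence bijective) equivalent $\Psi$ via Remark \ref{cont and crec}, and read off the axioms of \cite[Definitions 2.8 and 2.9]{Nekvinda} from Proposition \ref{quasi-norm} (i)--(vi); those axioms are quasi-norm, lattice property, Fatou property and $\|\chi_E\|_{\Psi}<\infty$, with no local-integrability axiom, as you anticipated. Your Aoki--Rolewicz/Fatou completeness step is valid but not needed, because under those definitions completeness is a theorem deduced from these properties rather than an axiom, and your argument essentially reproduces that theorem.
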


\begin{proof}
By \cite[Definitions 2.8 and 2.9]{Nekvinda}, the corollary follows from Proposition \ref{quasi-norm} and Remark \ref{cont and crec}.
\end{proof}

\begin{definition}
A function $\Phi : [0, \infty) \to [0, \infty)$ is called a \textit{Young function} if $\Phi$ is convex, left-continuous, $\lim_{t \to 0^{+}} \Phi(t) = \Phi(0) = 0$, and $\lim_{t \to \infty} \Phi(t) = \infty$.
\end{definition}

\begin{remark}
From the convexity and $\Phi(0) = 0$, it follows that any Young function is non-decreasing. Moreover, if $\Phi$ is a young function, from \cite[Lemma 3.2.2 (b) and Theorem 3.3.7 (b)]{Hasto}, it follows that the couple 
$(L^{\Phi}, \| \cdot \|_{\Phi})$ is a Banach space.
\end{remark}

\begin{remark} \label{Orlicz cont}
If an Orlicz function $\Phi$ is also a Young function, then $\Phi$ is a bijective continuous function from $[0, \infty)$ onto $[0, \infty)$.
\end{remark}

For a Young function $\Phi$, we define $\Phi^{-1}$ and its complementary function $\Phi^{*}$ on $[0, \infty)$ by
\[
\Phi^{-1}(s):= \inf\{ t \geq 0 : \Phi(t) > s \}
\]
and
\[
\Phi^{*}(s) := \sup\{ ts - \Phi(t) : t \in [0, \infty) \},
\]
respectively. From the definition of $\Phi^{*}$, it follows that
\begin{equation} \label{Young ineq}
ts \leq \Phi(t) + \Phi^{*}(s), \,\,\,\, \forall t,s \geq 0,
\end{equation}
Then, the K\"othe dual $(L^{\Phi})' = L^{\Phi^{*}}$ with comparable norms (see \cite[Lemma 2.4.2 and Theorem 3.4.6]{Hasto}). Moreover, 
by \cite[Property 1.6]{ONeil}, we have that
\begin{equation} \label{prop 1.6}
s \leq \Phi^{-1}(s) (\Phi^{*})^{-1}(s) \leq 2s, \,\,\,\, s \geq 0.
\end{equation}
If $\Phi$ is a Young-Orlicz function, then $\Phi^{-1}$ is the usual inverse function of $\Phi$.

\begin{definition}
An Orlicz function $\Phi$ is called an $N$-function if it is a continuous and convex function such that
\begin{equation} \label{2 limites}
\lim_{t \to 0^{+}} \frac{\Phi(t)}{t} = 0, \,\,\,\,\,\, \text{and} \,\,\,\,\,\,  \lim_{t \to \infty} \frac{\Phi(t)}{t} = \infty.
\end{equation}
\end{definition}

\begin{remark} \label{N funct}
From \cite[Lemma 2.16]{Zhang} we have that if $\Phi$ is an Orlicz function of positive lower type $p_{\Phi}^{-} \in (1, \infty)$ and 
positive upper type $p_{\Phi}^{+}$, then there exists an Orlicz $N$-function $\Psi$ equivalent to $\Phi$ of positive lower type 
$p_{\Psi}^{-} = p_{\Phi}^{-}$ and positive upper type $p_{\Psi}^{+} = p_{\Phi}^{+}$. Thus, without loss of generality, we may always 
assume that an Orlicz function $\Phi$ of positive lower type $p_{\Phi}^{-}$ and positive upper type $p_{\Phi}^{+}$, with 
$1 < p_{\Phi}^{-} \leq p_{\Phi}^{+} < \infty$, is also an $N$-function. In particular, an Orlicz $N$-function is a Young-Orlicz function.
\end{remark}

Next, we will prove that the Hardy-Littlewood maximal operator $M$ on the Heisenberg group is bounded on Orlicz spaces 
$L^{\Phi}(\mathbb{H}^n)$, when $\Phi$ is an Orlicz function with positive lower type $p_{\Phi}^{-} > 1$ and positive upper type 
$p_{\Phi}^{+}$. As an application of this result, we will obtain a vector-valued inequality for $M$. 

\begin{lemma} \label{N compl}
If $\Phi$ is an Orlicz $N$-function of positive upper type $p_{\Phi}^{+} > 1$, then its complementary function $\Phi^{*}$ is also an Orlicz 
$N$-function.
\end{lemma}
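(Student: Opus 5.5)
The plan is to verify directly, from the definition $\Phi^{*}(s)=\sup_{t\geq 0}\{ts-\Phi(t)\}$, each requirement in the definitions of an Orlicz function and of an $N$-function. The only property of $\Phi$ I expect to use is that it is an $N$-function: convex, continuous, $\Phi(0)=0$, and (\ref{2 limites}). The key auxiliary fact is that, by convexity and $\Phi(0)=0$, the quotient $t\mapsto \Phi(t)/t$ is non-decreasing on $(0,\infty)$. The hypothesis on the upper type $p_{\Phi}^{+}>1$ should not be needed beyond guaranteeing that we are in this setting.

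\emph{Step 1 (finiteness, basic values, monotonicity, convexity).} Fix $s\geq 0$. By the second limit in (\ref{2 limites}) there is $T$ with $\Phi(t)\geq (s+1)t$ for $t\geq T$. Hence $ts-\Phi(t)\leq 0$ for $t\geq T$, and the supremum is attained over the compact set $[0,T]$, where it is bounded by $Ts$. So $\Phi^{*}(s)<\infty$.
\begin{itemize}
\item Taking $t=0$ gives $\Phi^{*}\geq 0$, and $\Phi^{*}(0)=\sup_t(-\Phi(t))=0$.
\item For $s>0$, the first limit in (\ref{2 limites}) gives some $t>0$ with $\Phi(t)<ts$, so $\Phi^{*}(s)>0$.
\item $\Phi^{*}$ is non-decreasing, since it is a supremum of non-decreasing functions of $s$.
\item $\Phi^{*}$ is convex, since it is a supremum of affine functions of $s$.
\end{itemize}
Measurability of $z\mapsto\Phi^{*}(|f(z)|)$ then follows from monotonicity, or from continuity once Step 3 is done.

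\emph{Step 2 (the two limits for $\Phi^{*}$).} For $s\to\infty$: for every fixed $t>0$ we have $\Phi^{*}(s)/s\geq t-\Phi(t)/s\to t$. Since $t$ is arbitrary, $\Phi^{*}(s)/s\to\infty$, which also gives $\lim_{s\to\infty}\Phi^{*}(s)=\infty$.

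The limit at $0$ is the step I expect to need the most care, and I will use the monotonicity of $\Phi(t)/t$. Fix $\varepsilon>0$.
\begin{itemize}
\item For $t\geq\varepsilon$ we have $\Phi(t)\geq t\,\Phi(\varepsilon)/\varepsilon$. Hence $t-\Phi(t)/s\leq t\bigl(1-\Phi(\varepsilon)/(\varepsilon s)\bigr)\leq 0$ whenever $0<s\leq\Phi(\varepsilon)/\varepsilon$; note $\Phi(\varepsilon)>0$.
\item For $t<\varepsilon$ we trivially have $t-\Phi(t)/s\leq\varepsilon$.
\end{itemize}
Thus $\Phi^{*}(s)/s=\sup_t\{t-\Phi(t)/s\}\leq\varepsilon$ for all small $s>0$, i.e. $\Phi^{*}(s)/s\to 0$.

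\emph{Step 3 (continuity).} A finite convex function on $[0,\infty)$ is continuous on $(0,\infty)$. At $0$, Step 2 gives $0\leq\Phi^{*}(s)\leq\varepsilon s\to 0=\Phi^{*}(0)$. Combining Steps 1--3, $\Phi^{*}$ is a continuous, convex Orlicz function satisfying (\ref{2 limites}), i.e. an Orlicz $N$-function.
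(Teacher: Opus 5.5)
Your proof is correct. It reaches the key step by a different route from the paper.

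\textbf{How the paper argues.} The paper works mostly by citation:
\begin{enumerate}
\item $\Phi^{*}$ is a Young function by \cite[Lemma 2.4.2]{Hasto}.
\item Positivity of $\Phi^{*}$ on $(0,\infty)$ comes from the first limit in (\ref{2 limites}), and continuity then follows from Remark \ref{Orlicz cont}.
\item $\Phi^{*}(s)/s\to\infty$ follows from Young's inequality (\ref{Young ineq}), exactly as in your Step 2.
\item The limit $\Phi^{*}(s)/s\to 0$ as $s\to 0$ is the only place the upper-type hypothesis enters. The paper invokes \cite[Proposition 7.8]{sawa} to get that $\Phi^{*}$ has positive lower type $(p_{\Phi}^{+})'>1$. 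This gives $\Phi^{*}(s)/s\le C s^{(p_{\Phi}^{+})'-1}\Phi^{*}(1)$.
\end{enumerate}

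\textbf{How yours differs.} You replace step 4 with the elementary fact that $\Phi(t)/t$ is non-decreasing. This yields $\Phi^{*}(s)\le\varepsilon s$ whenever $0<s\le\Phi(\varepsilon)/\varepsilon$, using only $\Phi(\varepsilon)>0$. It shows the hypothesis $p_{\Phi}^{+}>1$ is superfluous: this is the classical fact that the complement of any $N$-function is an $N$-function. You also verify finiteness of $\Phi^{*}$ explicitly via the second limit in (\ref{2 limites}). The paper leaves this implicit when it calls $\Phi^{*}$ a Young function, since Young functions there are $[0,\infty)$-valued. Finally, you get continuity directly from convexity rather than through Remark \ref{Orlicz cont}.

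\textbf{What each route buys.} Yours is self-contained and works under weaker hypotheses. The paper's gives quantitative information: the lower type $(p_{\Phi}^{+})'$ of $\Phi^{*}$ and the rate $s^{(p_{\Phi}^{+})'-1}$. That lower-type information is exactly what is reused in Corollary \ref{M bound 2}, so the citation there would be needed anyway.
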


\begin{proof}
It is clear that $\Phi$ is a Young-Orlicz function. By \cite[Lemma 2.4.2]{Hasto}, $\Phi^{*}$ is a Young function. From (\ref{2 limites}), it follows that $\Phi^{*}(s) > 0$ for all $s >0$ and so $\Phi^{*}$ is continuous (see Remark \ref{Orlicz cont}). Now, from (\ref{Young ineq}), 
we have that $\Phi^{*}(s)/s \to \infty$ as $s \to \infty$. As $\Phi$ is of positive upper type $p_{\Phi}^{+} > 1$, by 
\cite[Proposition 7.8]{sawa}, we have that $\Phi^{*}$ is of positive lower type $(p_{\Phi}^{+})' > 1$ and thus, for $0 < s < 1$, results
\[
0 \leq \frac{\Phi^{*}(s)}{s} \leq C s^{(p_{\Phi}^{+})' - 1} \Phi^{*}(1) \to 0, \,\,\,\, \text{as} \,\, s \to 0.
\]
Therefore, $\Phi^{*}$ is an Orlicz $N$-function.
\end{proof}

\begin{proposition} \label{M bound 1}
If $\Phi$ is an Orlicz function of positive lower type $p_{\Phi}^{-}$ and upper type $p_{\Phi}^{+}$ with 
$1 < p_{\Phi}^{-} \leq p_{\Phi}^{+} < \infty$, then the Hardy-Littlewood maximal operator $M$ is bounded on $L^{\Phi}(\mathbb{H}^n)$.
\end{proposition}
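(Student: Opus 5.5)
We prove the modular inequality $\int_{\mathbb{H}^n} \Phi(Mf(z))\,dz \leq C \int_{\mathbb{H}^n} \Phi(C|f(z)|)\,dz$ and then pass to the Luxemburg quasi-norm. The tool is real interpolation via Marcinkiewicz decomposition, using two facts about $M$ on the space of homogeneous type $(\mathbb{H}^n,\rho,dz)$: $M$ is of weak type $(1,1)$ (by the Vitali covering lemma for $\rho$-balls, using $|B(z_0,\lambda\delta)|=\lambda^{Q}|B(z_0,\delta)|$), and $M$ is bounded on $L^\infty$.

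First reduce the hypotheses. By Remark \ref{cont and crec} and Remark \ref{N funct} we may assume $\Phi$ is a continuous, strictly increasing $N$-function with the same types. Since $p_{\Phi}^{-}>1$, fix $1<p_0<p_{\Phi}^{-}$; then $\Phi$ is also of lower type $p_0$.

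Next set up the level-set decomposition. Fix $f$ with $\kappa_\Phi(f)<\infty$ and $\lambda>0$. Write $f=f_1+f_2$ with $f_1=f\chi_{\{|f|>\lambda/2\}}$. Then $Mf_2\leq \lambda/2$, so $\{Mf>\lambda\}\subset\{Mf_1>\lambda/2\}$. The weak $(1,1)$ bound gives
\[
|\{Mf>\lambda\}|\leq \frac{C}{\lambda}\int_{\{|f|>\lambda/2\}}|f(w)|\,dw .
\]

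Now write $\Phi$ as a layer-cake integral. Use $\Phi(s)=\int_0^s \Phi'(\lambda)\,d\lambda$; $\Phi'$ exists a.e. since $\Phi$ is convex, and otherwise one replaces $\Phi$ by the equivalent function $\int_0^t \Phi(s)/s\,ds$. By Fubini,
\[
\int\Phi(Mf)\leq C\int |f(w)| \int_0^{2|f(w)|}\frac{\Phi'(\lambda)}{\lambda}\,d\lambda\,dw .
\]
The key estimate is $\int_0^{s}\Phi'(\lambda)\lambda^{-1}\,d\lambda\lesssim \Phi(s)/s$. To obtain it, integrate by parts:
\[
\int_0^s \frac{\Phi'(\lambda)}{\lambda}\,d\lambda = \frac{\Phi(s)}{s}+\int_0^s\frac{\Phi(\lambda)}{\lambda^2}\,d\lambda .
\]
The boundary term at $0$ vanishes because $\Phi(\lambda)\leq C\lambda^{p_0}\Phi(1)$ for $\lambda\leq 1$. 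Applying the lower type $p_0$ again, $\Phi(\lambda)\leq C(\lambda/s)^{p_0}\Phi(s)$ for $\lambda\leq s$, so the remaining integral is at most $C\Phi(s)s^{-p_0}\int_0^s\lambda^{p_0-2}\,d\lambda = C\Phi(s)/((p_0-1)s)$. This is the step where $p_0>1$ is essential, and it is the main obstacle; everything else is routine. Combining with the upper type,
\[
\int\Phi(Mf)\leq C\int \Phi(2|f|)\leq C\,2^{p_\Phi^+}\kappa_\Phi(f).
\]
In particular $Mf\in L^\Phi(\mathbb{H}^n)$ by Lemma \ref{modular 1}.

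Finally pass to norms. Apply the modular inequality to $f/(A\|f\|_\Phi)$, with $A\geq1$ to be chosen. By the lower type with $r=1/A$ and Lemma \ref{fi menor uno},
\[
\int\Phi\!\left(\frac{Mf}{A\|f\|_\Phi}\right)\leq C\int\Phi\!\left(\frac{|f|}{A\|f\|_\Phi}\right)\leq C\,C' A^{-p_0}.
\]
Choosing $A$ large makes the right side at most $1$, hence $\|Mf\|_\Phi\leq A\|f\|_\Phi$.
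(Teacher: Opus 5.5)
Your proof is correct, and it rests on the same mechanism as the paper's: interpolating the weak type $(1,1)$ and $L^{\infty}$ bounds of $M$ on $\mathbb{H}^n$. The difference is that the paper does not carry out the interpolation itself. After reducing to an $N$-function, it uses Lemma~\ref{N compl} to know that $\Phi^{*}$ is also an $N$-function, and then quotes Gallardo's Orlicz-space interpolation theorem (Theorem 2.2 of the cited work).

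You instead prove the Marcinkiewicz-type step directly. The hypothesis $p_{\Phi}^{-}>1$ enters visibly through the estimate $\int_0^s \Phi(\lambda)\lambda^{-2}\,d\lambda \lesssim \Phi(s)/s$, that is, through the integrability of $\lambda^{p_0-2}$ at $0$. This is the $\nabla_2$-type condition that the cited theorem encodes through the complementary function.

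The paper's route is shorter, but it relies on an external result and on passing to $\Phi^{*}$. Yours is self-contained, avoids $\Phi^{*}$ altogether, and yields the modular inequality $\kappa_{\Phi}(Mf) \lesssim \kappa_{\Phi}(f)$ directly. It also shows that the upper type plays only a cosmetic role in your estimate: it absorbs the factor $2$ in $\Phi(2|f|)$. For the norm bound, that factor could equally be absorbed by the lower type by taking $A \geq 2$.
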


\begin{proof}
By Remark \ref{N funct}, we can assume that $\Phi$ is an Orlicz $N$-function with $1 < p_{\Phi}^{-} \leq p_{\Phi}^{+} < \infty$. From Lemma 
\ref{N compl}, it follows that $\Phi^{*}$ is an Orlicz $N$-function. Since the Hardy-Littlewood maximal operator on $\mathbb{H}^n$ is 
of weak type $(1, 1)$ and type $(\infty, \infty)$ (see \cite[{\bf 2.5} and Theorem 1 in Chapter I]{Elias}), the proposition then follows 
from \cite[Theorem 2.2]{Gallardo}.
\end{proof}

\begin{corollary} \label{M bound 2}
If $\Phi$ is an Orlicz function of positive lower type $p_{\Phi}^{-}$ and upper type $p_{\Phi}^{+}$ with 
$1 < p_{\Phi}^{-} \leq p_{\Phi}^{+} < \infty$, then the Hardy-Littlewood maximal operator $M$ is bounded on $L^{\Phi^{*}}(\mathbb{H}^n)$, where $\Phi^{*}$ is the complementary function of $\Phi$.
\end{corollary}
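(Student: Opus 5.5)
The plan is to reduce the corollary to Proposition \ref{M bound 1} applied to $\Phi^{*}$ in place of $\Phi$. By Remark \ref{N funct} we may assume that $\Phi$ is an Orlicz $N$-function of positive lower type $p_{\Phi}^{-}>1$ and positive upper type $p_{\Phi}^{+}<\infty$. Replacing $\Phi$ by an equivalent function changes $\Phi^{*}$ only to an equivalent function. Indeed, if $C_0^{-1}\Phi(t/C)\le\Psi(t)\le C_0\Phi(Ct)$, then a direct computation from the definition of the complementary function gives a comparable two-sided estimate between $\Psi^{*}$ and $\Phi^{*}$, after rescaling the argument and the constants. By Remark \ref{cont and crec}, $L^{\Phi^{*}}$ is therefore unchanged up to equivalent norms. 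By Lemma \ref{N compl}, $\Phi^{*}$ is again an Orlicz $N$-function.

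It then remains to check that $\Phi^{*}$ has positive lower type $q^{-}>1$ and positive upper type $q^{+}<\infty$. The natural candidates are $q^{-}=(p_{\Phi}^{+})'$ and $q^{+}=(p_{\Phi}^{-})'$. The lower type was already obtained in the proof of Lemma \ref{N compl} from \cite[Proposition 7.8]{sawa}: since $\Phi$ has upper type $p_{\Phi}^{+}$, $\Phi^{*}$ has lower type $(p_{\Phi}^{+})'$, and $(p_{\Phi}^{+})'>1$ because $p_{\Phi}^{+}<\infty$. The upper type follows from the dual half of the same result, or directly. Suppose $\Phi(rt)\ge C^{-1}r^{p}\Phi(t)$ for $r\ge1$, which is equivalent to the lower type $p=p_{\Phi}^{-}$ of $\Phi$. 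For $\lambda\ge1$, write $\Phi^{*}(\lambda s)=\sup_t(t\lambda s-\Phi(t))$ and substitute $t=\lambda^{1/(p-1)}u$. Using the lower-type inequality for $\Phi$, this gives
\[
\Phi^{*}(\lambda s)\le \lambda^{p'}\sup_u\bigl(us-C^{-1}\Phi(u)\bigr).
\]
Comparing $\sup_u(us-C^{-1}\Phi(u))$ with $\Phi^{*}(s)$, via a scaling of $u$ and the lower type once more to absorb the constant $C$, yields $\Phi^{*}(\lambda s)\lesssim\lambda^{p'}\Phi^{*}(s)$. Since $p>1$, we have $(p_{\Phi}^{-})'<\infty$.

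With these indices, $1<(p_{\Phi}^{+})'\le(p_{\Phi}^{-})'<\infty$, so Proposition \ref{M bound 1} applies to $\Phi^{*}$ and gives the boundedness of $M$ on $L^{\Phi^{*}}(\mathbb{H}^n)$. Alternatively, one could apply \cite[Theorem 2.2]{Gallardo} directly to $\Phi^{*}$, whose complement is $\Phi^{**}=\Phi$ (by convexity), also an $N$-function. I expect the only real obstacle to be the bookkeeping of constants when transferring type information from $\Phi$ to $\Phi^{*}$, in particular absorbing the constant $C^{-1}$ in front of $\Phi$. The rest is a direct invocation of the earlier results.
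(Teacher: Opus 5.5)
Your proposal is correct and follows essentially the same route as the paper. You reduce to an $N$-function, use Lemma \ref{N compl} and \cite[Proposition 7.8]{sawa} to get that $\Phi^{*}$ is an $N$-function of positive lower type $(p_{\Phi}^{+})'>1$ and positive upper type $(p_{\Phi}^{-})'<\infty$, and then apply Proposition \ref{M bound 1} to $\Phi^{*}$; your extra checks (that replacing $\Phi$ by an equivalent function only replaces $\Phi^{*}$ by an equivalent function, and the direct scaling proof of the upper type of $\Phi^{*}$, where lower type with $\ell=C^{2/(p-1)}$ absorbs the constant) are valid and fill in details the paper leaves implicit.
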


\begin{proof}
Assuming that $\Phi$ is a $N$-function, it follows, by Lemma \ref{N compl} and \cite[Proposition 7.8]{sawa}, that $\Phi^{*}$ is 
an Orlicz $N$-function of positive lower type $p_{\Phi^{*}}^{-} = (p_{\Phi}^{+})' > 1$ and upper type 
$p_{\Phi^{*}}^{+} = (p_{\Phi}^{-})' \geq (p_{\Phi}^{+})'$. Then, the corollary follows from Proposition \ref{M bound 1} applied 
to $\Phi^{*}$.  
\end{proof}

\begin{theorem} \label{Feff-Stein max ineq}
Let $\Phi$ be an Orlicz function of positive lower type $p_{\Phi}^{-}$ and upper type $p_{\Phi}^{+}$ with 
$1 < p_{\Phi}^{-} \leq p_{\Phi}^{+} < \infty$. Then, for any $1 < r < \infty$,
\begin{equation} \label{vector ineq M}
\left\| \left( \sum_{j=1}^{\infty} (M f_j)^r  \right)^{1/r}  \right\|_{\Phi} \lesssim 
\left\| \left( \sum_{j=1}^{\infty} |f_j|^r  \right)^{1/r}  \right\|_{\Phi}
\end{equation}
holds for all sequences of locally integrable functions $\{ f_j \}_{j=1}^{\infty}$ on $\mathbb{H}^n$.
\end{theorem}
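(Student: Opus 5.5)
The plan is a duality argument. Since $1 < p_{\Phi}^{-} \leq p_{\Phi}^{+} < \infty$, by Remark \ref{N funct} we may replace $\Phi$ with an equivalent Orlicz $N$-function. By Lemma \ref{N compl} and Corollary \ref{M bound 2}, $M$ is then bounded on both $L^{\Phi}(\mathbb{H}^n)$ and $L^{\Phi^{*}}(\mathbb{H}^n)$. The K\"othe dual of $L^{\Phi}$ is $L^{\Phi^{*}}$ with comparable norms, so
\[
\|F\|_{\Phi} \approx \sup\Big\{ \int_{\mathbb{H}^n} |F| g \, dz : g \geq 0, \ \|g\|_{\Phi^{*}} \leq 1 \Big\}.
\]
By monotone convergence and Proposition \ref{quasi-norm} (v), it suffices to treat finite sequences $f_1, \dots, f_N$ with constants independent of $N$.

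The key step is the weighted inequality of Fefferman--Stein type on the space of homogeneous type $(\mathbb{H}^n, \rho, dz)$. For $1<r<\infty$ and every weight $w \geq 0$,
\[
\int_{\mathbb{H}^n} (Mf)^r w \, dz \lesssim \int_{\mathbb{H}^n} |f|^r Mw \, dz .
\]
This follows by interpolation from the weak type $(1,1)$ estimate $w(\{Mf > \lambda\}) \lesssim \lambda^{-1}\int |f| Mw$, which is proved with the Vitali covering lemma for $\rho$-balls, together with the trivial $L^\infty(Mw)\to L^\infty(w)$ bound. With it, the classical Fefferman--Stein vector-valued inequality
\[
\Big\| \big(\textstyle\sum_j (Mf_j)^r\big)^{1/r} \Big\|_{L^s} \lesssim \Big\| \big(\textstyle\sum_j |f_j|^r\big)^{1/r} \Big\|_{L^s}, \qquad 1<s<\infty,
\]
holds on $\mathbb{H}^n$. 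One could try to conclude from here by interpolation, but I prefer a direct extrapolation-type argument.

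Write $F=(\sum_j |f_j|^r)^{1/r}$ and $G = (\sum_j (Mf_j)^r)^{1/r}$. Take $g \geq 0$ with $\|g\|_{\Phi^{*}} \leq 1$. Using $\ell^r$ duality, choose nonnegative $h_j$ with $\sum_j h_j^{r'} \leq 1$ and $G = \sum_j (Mf_j) h_j$. Then
\[
\int G g = \sum_j \int (Mf_j)(h_j g).
\]
The obstacle is that $M$ is not self-adjoint, so the operator cannot simply be moved onto $g$. I would handle this through the Rubio de Francia algorithm in $L^{\Phi^{*}}$:
\[
Rg = \sum_{k \geq 0} \frac{M^k g}{(2\|M\|_{L^{\Phi^*}})^k}.
\]
This gives $g \leq Rg$, $\|Rg\|_{\Phi^{*}} \leq 2$, and $M(Rg) \leq 2\|M\|\, Rg$, so $Rg$ is an $A_1$ weight on $\mathbb{H}^n$. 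For $A_1$ weights the weighted vector-valued inequality $\int G^r w \lesssim \int F^r w$ holds. Since $w \in A_1 \subset A_r$, this comes from $A_r$ weighted boundedness of $M$ on the homogeneous space and the standard vector-valued argument (e.g.\ Andersen--John). Applying it with $w = Rg$ bounds $\int G^r Rg$, but the goal is the $L^1$ pairing $\int G\, g$.

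So I would apply the scheme to powers instead. Set $\Psi = \Phi_{1/s}$ with $1<s<p_{\Phi}^{-}$. By Lemma \ref{lower upper fits}, $\Psi$ still has lower type greater than $1$, so it is $N$-equivalent and $M$ is bounded on $L^{\Psi^*}$. By Lemma \ref{fits}, $\|G\|_{\Phi}^s = \|G^s\|_{\Psi}$. Fix $1<s<\min(r, p_{\Phi}^{-})$ and take $g$ in the unit ball of $L^{\Psi^*}$. Then
\[
\int G^s g \leq \int G^s Rg \lesssim \int F^s Rg \lesssim \|F^s\|_{\Psi}\|Rg\|_{\Psi^*} \lesssim \|F\|_{\Phi}^s .
\]
The middle inequality is the weighted vector-valued inequality in $L^s(w)$ with $w = Rg \in A_1 \subset A_s$. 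Taking the supremum over $g$ and using duality for $\Psi$ gives (\ref{vector ineq M}).

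The main expected difficulty is justifying the weighted vector-valued inequality for $A_1$ weights on $\mathbb{H}^n$. I would cite the theory of spaces of homogeneous type for it, or derive it directly from $\int (Mf)^s w \lesssim \int |f|^s Mw \leq C\int |f|^s w$ for $w \in A_1$ combined with Fefferman--Stein's original vector-valued argument.
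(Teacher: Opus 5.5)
Your proof is correct, but it runs the extrapolation differently from the paper. The paper's proof is essentially a citation. It takes the scalar bound of $M$ on $L^p_w(\mathbb{H}^n)$ for $w\in A_p$ and uses that $M$ is bounded on \emph{both} $L^{\Phi}$ (Proposition \ref{M bound 1}) and $L^{\Phi^{*}}$ (Corollary \ref{M bound 2}). It then defers to an extrapolation argument from the literature.

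Having both bounds lets one extrapolate from the exponent $r$ itself:
\begin{itemize}
\item iterate $M$ on $L^{\Phi}$ to get $\mathcal{R}F\geq F$ in $A_1$;
\item iterate $M$ on $L^{\Phi^{*}}$ to get $\mathcal{R}'h\geq h$ in $A_1$;
\item put $w=\mathcal{R}'h\,(\mathcal{R}F)^{1-r}$, which lies in $A_r$.
\end{itemize}
H\"older then gives $\int Gh\leq (\int G^r w)^{1/r}(\int \mathcal{R}F\,\mathcal{R}'h)^{1/r'}$, and $\int G^r w\lesssim \int F^r w\leq \int \mathcal{R}F\,\mathcal{R}'h\lesssim \|F\|_{\Phi}$. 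The middle weighted estimate is just the scalar $A_r$ bound applied to each $f_j$ and summed over $j$.

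Your one-sided version goes through the convexification $\Phi_{1/s}$ and the Rubio de Francia algorithm in its dual. It needs $M$ bounded only on $L^{(\Phi_{1/s})^{*}}$ and only $A_1$ weights, so it asks less of the function space. The price is that $s<p_{\Phi}^{-}$ prevents taking $s=r$ in general. You therefore need the genuinely vector-valued weighted inequality in $L^s(w)$ with $s\neq r$ on $\mathbb{H}^n$, which is exactly the step you flagged as the main difficulty.

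That inequality is true. One route is weighted weak type $(1,1)$ for the $\ell^r$-valued maximal operator plus interpolation; another is extrapolation from the trivial case $s=r$. So your argument is complete with that citation, and the two-sided route simply makes it unnecessary.

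When writing it up, make two points explicit:
\begin{itemize}
\item the weighted constant depends only on $[Rg]_{A_1}\leq 2\|M\|_{L^{\Psi^{*}}}$, hence not on $g$;
\item $\Psi=\Phi_{1/s}$ is not convex in general, so it must again be replaced by an equivalent $N$-function before forming $\Psi^{*}$ and invoking K\"othe duality.
\end{itemize}
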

 
\begin{proof}
Given $1 < p < \infty$, by \cite[Proposition 7.13]{Hytonen}, we have, for any weight $w \in \mathcal{A}_p(\mathbb{H}^n)$ 
(see \cite[p. 28]{Hytonen}), that the Hardy-Littlewood maximal operator $M$ is a bounded operator 
$L^p_w (\mathbb{H}^n) \to L^p_w (\mathbb{H}^n)$. Then, combining Proposition \ref{M bound 1}, Corollary \ref{M bound 2} and proceeding as in the proof of \cite[Theorem 2.19]{sawa} (we point out that such argument works as well on $\mathbb{H}^n$), (\ref{vector ineq M}) follows.
\end{proof}

We finish this section with the following auxiliary result. 

\begin{proposition} \label{b_j functions}
Let $\Phi$ be an Orlicz function of positive lower type $p_{\Phi}^{-}$ and positive upper type $p_{\Phi}^{+}$. Suppose that $s > 1$ and 
$0 < \theta < \min\{ 1, p_{\Phi}^{-} \}$ are such that $s \theta > p_{\Phi}^{+}$ and $\{ b_j \}_{j=1}^{\infty}$ is a sequence of non-negative functions in $L^{s}(\mathbb{H}^{n})$ such that each $b_j$ is supported in a $\rho$ - ball $B_j \subset \mathbb{H}^{n}$ and
\[
\| b_j \|_{L^{s}(\mathbb{H}^{n})} \leq A_j |B_j|^{1/s},
\]
where $A_j >0$ for all $j \geq 1$. Then, for any sequence of non-negative numbers $\{ k_j \}_{j=1}^{\infty}$ we have
\[
\left\| \sum_{j=1}^{\infty} k_j b_j \right\|_{\Phi_{1/\theta}} \leq C \left\| \sum_{j=1}^{\infty} A_j k_j \chi_{B_j} 
\right\|_{\Phi_{1/\theta}},
\]
where $C$ is a positive constant which does not depend on $\{ b_j \}_{j=1}^{\infty}$, $\{ A_j \}_{j=1}^{\infty}$, and 
$\{ k_j \}_{j=1}^{\infty}$.
\end{proposition}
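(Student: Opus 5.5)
By Lemma \ref{fits}, the claim is equivalent to the same inequality for the $\theta$-th powers:
\[
\Big\| \Big(\sum_j k_j b_j\Big)^{\theta}\Big\|_{\Phi_{1/\theta}}^{1/\theta} = \Big\|\sum_j k_j b_j\Big\|_{\Phi}^{\ldots}.
\]
It is cleaner, though, to work directly in $L^{\Phi_{1/\theta}}$. By Lemma \ref{lower upper fits}, $\Phi_{1/\theta}$ has lower type $p_\Phi^-/\theta>1$ and upper type $p_\Phi^+/\theta<s$. Hence, by Remark \ref{N funct}, we may assume that $\Psi:=\Phi_{1/\theta}$ is an $N$-function, in particular a Young function. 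Then $L^{\Psi}$ is a Banach space whose Köthe dual is $L^{\Psi^*}$ with comparable norms, and $M$ is bounded on $L^{\Psi^*}$ by Corollary \ref{M bound 2}. The plan is a duality argument in the spirit of the classical lemma for $L^p$ with $p<s$.

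Since everything is non-negative, it suffices to show that for every non-negative $g\in L^{\Psi^*}$ with $\|g\|_{\Psi^*}\le 1$ we have
\[
\int \sum_j k_j b_j\, g \lesssim \Big\|\sum_j A_j k_j\chi_{B_j}\Big\|_{\Psi}.
\]
Fix such a $g$. For each $j$, Hölder's inequality with exponents $s,s'$ together with the support condition gives
\[
\int b_j g \le \|b_j\|_{L^s}\Big(\int_{B_j} g^{s'}\Big)^{1/s'} \le A_j|B_j|^{1/s}|B_j|^{1/s'}\Big(\frac{1}{|B_j|}\int_{B_j}g^{s'}\Big)^{1/s'}.
\]
The right-hand side is at most $A_j\int_{B_j} \big(M(g^{s'})\big)^{1/s'}(z)\,dz$, since $\frac{1}{|B_j|}\int_{B_j}g^{s'}\le M(g^{s'})(z)$ for every $z\in B_j$. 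Summing over $j$ and applying the Young/Hölder inequality in Orlicz spaces yields
\[
\int\sum_j k_jb_jg \lesssim \Big\|\sum_j A_jk_j\chi_{B_j}\Big\|_{\Psi}\,\big\|(M(g^{s'}))^{1/s'}\big\|_{\Psi^*}.
\]

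The main obstacle is to show that $\|(M(g^{s'}))^{1/s'}\|_{\Psi^*}\lesssim \|g\|_{\Psi^*}$. By Lemma \ref{fits}, this is equivalent to boundedness of $M$ on $L^{(\Psi^*)_{1/s'}}$, where $(\Psi^*)_{1/s'}(t)=\Psi^*(t^{1/s'})$. By \cite[Proposition 7.8]{sawa}, $\Psi^*$ has lower type $(p_\Psi^+)'$, so by Lemma \ref{lower upper fits} the function $(\Psi^*)_{1/s'}$ has lower type $(p_\Psi^+)'/s'$. This exceeds $1$ exactly when $p_\Psi^+<s$, that is, when $p_\Phi^+<s\theta$, which is precisely the hypothesis. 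Its upper type is $(p_\Psi^-)'/s'<\infty$ because $p_\Psi^->1$. Proposition \ref{M bound 1} therefore applies, after replacing $(\Psi^*)_{1/s'}$ by an equivalent function if needed (Remark \ref{cont and crec}). The care required here is in tracking the type indices of the complementary function through \cite{sawa} and in checking that passing to equivalent $N$-functions does not change the norms beyond constants.

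Taking the supremum over $g$ and using the Köthe duality $(L^\Psi)'=L^{\Psi^*}$ then gives the stated inequality. The constant depends only on $\Phi,\theta,s$ and the bound for $M$, and not on the sequences $\{b_j\}$, $\{A_j\}$, $\{k_j\}$.
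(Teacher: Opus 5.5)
Your proof is correct and is essentially the paper's argument. The paper, via the Proposition 3.3 argument it cites, also dualizes $L^{\Phi_{1/\theta}}$ against $L^{(\Phi_{1/\theta})^*}$, uses Orlicz H\"older together with H\"older on each ball $B_j$, and concludes with the boundedness of $M$ on $L^{((\Phi_{1/\theta})^*)_{1/s'}}$; this is exactly where the condition $1<s'<(p_\Phi^+/\theta)'$, i.e.\ $s\theta>p_\Phi^+$, is used. The only blemish is your garbled, abandoned first display invoking Lemma \ref{fits}, which plays no role and should simply be deleted.
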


\begin{proof}
The argument used to proof \cite[Proposition 3.3]{Pablo} also works in this setting, but now considering the space $L^{\Phi}(\mathbb{H}^n)$, 
\cite[Lemma 3.2.11]{Hasto}, Lemma \ref{fits} and Corollary \ref{M bound 2} applied with $\left( (\Phi^{1/\theta})^{*} \right)^{1/s'}$ and taking into account that $1 < s' < (p_{\Phi}^{+}/\theta)' \leq (p_{\Phi}^{-}/\theta)'$. 
\end{proof}

\section{Orlicz-Hardy spaces on the Heisenberg group} \label{OH}

In this section, we define the Orlicz-Hardy spaces $H^{\Phi}(\mathbb{H}^n)$ where $\Phi$ is an Orlicz function of positive lower type 
$p_{\Phi}^{-}$ and positive upper type $p_{\Phi}^{+}$. We also provide an atomic decomposition for elements in $H^{\Phi}(\mathbb{H}^n)$.

Given $N \in \mathbb{N}$, define 
\[
\mathcal{F}_{N}=\left\{ \varphi \in \mathcal{S}(\mathbb{H}^{n}) : \| \phi \|_{\mathcal{S}(\mathbb{H}^{n}), \, N} \leq 1\right\}.
\]
For any $f \in \mathcal{S}'(\mathbb{H}^{n})$, the grand maximal function of $f$ is defined by 
\[
\mathcal{M}_N f(z)=\sup\limits_{t>0}\sup\limits_{\phi \in \mathcal{F}_{N}}\left\vert \left( f \ast \phi_t \right)(z) \right\vert,
\]
where $\phi_t(z) := t^{-2n-2} \phi(t^{-1} \cdot z)$ with $t > 0$. 

Now, we introduce two maximal functions. Given $\phi \in \mathcal{S}(\mathbb{H}^n)$ and $f \in \mathcal{S}'(\mathbb{H}^n)$, we define the discrete maximal function $M_{\phi}^{dis}f$ by
\[
(M_{\phi}^{dis}f)(z) := \sup \left\{ |(f \ast \phi_{2^{-j}})(z)| : j \in \mathbb{Z} \right\}, \,\,\,\, z \in \mathbb{H}^n.
\]
Given an integer $L > 1$, we define the maximal function $M_{\phi, L}^{*}f$ by
\[
M_{\phi, L}^{*}f(z) := \sup_{j \in \mathbb{Z}} \sup_{w \in \mathbb{H}^n} \frac{|(f \ast \phi_{2^{-j}})(w)|}{(1+ 4^{j} \rho(z^{-1} \cdot w)^2)^L}, 
\,\,\,\, z \in \mathbb{H}^n.
\]

\begin{lemma} (\cite[Lemma 3.4]{Fang}) \label{M star y phi}
Let $f \in \mathcal{S}'(\mathbb{H}^n)$, $0 < \theta < 1$ and let $\phi$ be a radial function in $\mathcal{S}(\mathbb{H}^n)$ with 
$\int \phi \neq 0$. Then, there exists $L_{\theta}$ such that for all $L \geq L_{\theta}$ and $z \in \mathbb{H}^n$,
\[
M_{\phi, L}^{*}f(z) \lesssim \left[ M \left( (M_{\phi}^{dis}f)^{\theta} \right)(z) \right]^{1/\theta}.
\]
\end{lemma}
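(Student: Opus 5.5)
This is the Peetre-type estimate, and the plan is to follow the standard Rychkov argument adapted to the Heisenberg group. Fix $f$, $\theta$, a radial $\phi$ with $\int\phi\neq 0$, and $z\in\mathbb{H}^n$. Set $F_j(w)=|(f\ast\phi_{2^{-j}})(w)|$. The goal is to show, for every $j\in\mathbb{Z}$ and $w\in\mathbb{H}^n$,
\[
\frac{F_j(w)}{(1+4^j\rho(z^{-1}\cdot w)^2)^L}\lesssim \left[M\left((M_\phi^{dis}f)^\theta\right)(z)\right]^{1/\theta}.
\]

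\textbf{Step 1: reproducing formula.} Because $\phi$ is radial with $\int\phi\neq0$, one can build a Calder\'on-type reproducing formula by working with radial functions on $\mathbb{H}^n$, where convolution of radial Schwartz functions is well behaved and the spherical transform plays the role of the Fourier transform. The formula expresses $f\ast\phi_{2^{-j}}$ as a combination
\[
f\ast\phi_{2^{-j}}=\sum_{k\ge 0}(f\ast\phi_{2^{-j-k}})\ast\eta^{(k)}_{2^{-j}},
\]
with Schwartz kernels $\eta^{(k)}$ whose seminorms decay like $2^{-kM}$ for any prescribed $M$ (up to a finite-dimensional correction if needed). This yields the pointwise bound
\[
F_j(w)\lesssim\sum_{k\ge0}2^{-kM}\int F_{j+k}(u)\,2^{(j+k)Q}(1+2^{j}\rho(u^{-1}\cdot w))^{-R}\,du
\]
for large $R$.

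\textbf{Step 2: the $\theta$-trick.} Define the Peetre-type quantity
\[
M^{*}_{j}(w)=\sup_{k\ge0,\,u}\frac{2^{-kN}F_{j+k}(u)}{(1+2^{j}\rho(u^{-1}\cdot w))^{2L}}.
\]
Split $F_{j+k}(u)=F_{j+k}(u)^{1-\theta}F_{j+k}(u)^{\theta}$ and bound the factor $F^{1-\theta}$ by $(M^*_j)^{1-\theta}$ times a power of the quasi-distance weight. The quasi-triangle inequality $\rho(u^{-1}w)\lesssim\rho(u^{-1}v)+\rho(v^{-1}w)$ moves the weights around. Choosing $M,R$ large relative to $N,L$ gives
\[
M^*_j(w)^{\theta}\lesssim\sum_k 2^{-k\epsilon}\int F_{j+k}(u)^\theta\, 2^{(j+k)Q}(1+2^{j}\rho(u^{-1}w))^{-R'}du.
\]
This inequality is valid provided $M^*_j(w)<\infty$. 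Finiteness must be checked separately: since $f\in\mathcal{S}'$, $F_j$ grows at most polynomially in $\rho$ and in $2^{\pm j}$, so $M^*_j$ is finite once $N$ and $L$ are large depending on the order of $f$. If that fails, I would first replace $f\ast\phi$ by a truncated version and then pass to a limit. \emph{I expect this finiteness and absorption step to be the main obstacle}, since $L_\theta$ must be independent of $f$. The standard remedy, which I would try first, is to run the argument with an auxiliary large $L'$ that depends on $f$ to obtain finiteness, and then note that the resulting inequality itself forces the bound with $L$ depending only on $\theta$.

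\textbf{Step 3: conclusion.} Bound $F_{j+k}(u)^\theta\le (M_\phi^{dis}f(u))^\theta$. Decompose the integral over dyadic annuli $\rho(u^{-1}w)\sim 2^{-j+m}$. Each annulus lies in a ball of radius $\sim 2^{-j+m}(1+2^j\rho(z^{-1}w))$ containing $z$, so summing over annuli gives
\[
M_j^*(w)^\theta\lesssim (1+2^j\rho(z^{-1}w))^{Q}\,M\left((M_\phi^{dis}f)^\theta\right)(z).
\]
Since $F_j(w)\le M^*_j(w)$, taking $\theta$-th roots and choosing $L\ge L_\theta$ with $2L\ge Q/\theta$ absorbs the factor $(1+2^j\rho)^{Q/\theta}$ into $(1+4^j\rho^2)^L$. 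Taking the supremum over $j$ and $w$ finishes the proof.
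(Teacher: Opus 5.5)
The paper gives no argument for this lemma; it is imported from the cited reference (Lemma 3.4 there). Your outline is the standard Rychkov--Peetre argument for such estimates. Steps 2--3 close with $N\theta>Q$, $2L\theta>Q$, and $M,R$ large in terms of $N,L$; the Jacobian in Step 1 is really $2^{jQ}$ rather than $2^{(j+k)Q}$, which is harmless.

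The genuine gap is the one you suspected, and the remedy you describe does not give the lemma as stated. Run the absorption with $N',L'$ large in terms of the order of $f$, so the Peetre function is finite, and then do Step 3. For all $j$ and $w$ this yields
\[
F_j(w)\le C_{N',L'}\,\big(1+2^j\rho(z^{-1}\cdot w)\big)^{Q/\theta}A(z)^{1/\theta},\qquad A(z):=M\big((M^{dis}_\phi f)^\theta\big)(z).
\]
So the exponent is not the issue: it is $Q/\theta$ whatever $L'$ is. The problem is the constant. It depends on $N',L'$ through the decay exponents $M$ and $R$ of the reproducing formula, which must exceed roughly $N'(1-\theta)$ and $2L'$. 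Hence it depends on $f$, and an $f$-dependent constant is useless for Theorem~\ref{norm comparable}.

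The missing step is a second pass that uses this bound only qualitatively. If $A(z)=\infty$ there is nothing to prove, so assume $A(z)<\infty$. Then $1+2^{j+k}\rho(z^{-1}\cdot u)\le 2^k\big(1+2^j\rho(z^{-1}\cdot w)\big)\big(1+2^j\rho(u^{-1}\cdot w)\big)$ shows that the Peetre function with the $\theta$-dependent parameters ($N\theta>Q$, $2L\theta>Q$) is finite at every center $w$. Now rerun the absorption with those parameters; the constant then depends only on $\theta$, $Q$ and $\phi$. Your truncation alternative hits the same obstacle: any spatial damping factor needs an $f$-dependent exponent, which must then be absorbed by the kernel decay.

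Step 1 is only gestured at, and the suggested route is shakier than it looks. Kor\'anyi-radial functions are not, in general, closed under convolution on $\mathbb{H}^n$. What radiality actually gives is $U(n)$-invariance (hence $\phi_s\ast\phi_t=\phi_t\ast\phi_s$) together with $\phi(z^{-1})=\phi(z)$. Building the $\eta^{(k)}$ through the spherical transform of the Gelfand pair $(U(n)\ltimes\mathbb{H}^n,U(n))$ would need a Schwartz-correspondence theorem for that transform.

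Instead, invoke or prove a decomposition lemma of Folland--Stein type: $\phi=\sum_{k\ge 0}\phi_{2^{-k}}\ast\eta^{(k)}$ in $\mathcal{S}(\mathbb{H}^n)$ with $\|\eta^{(k)}\|_{\mathcal{S}(\mathbb{H}^n),N}\le C_{N,M}2^{-kM}$. Then dilate using $(a\ast b)_t=a_t\ast b_t$. No ``finite-dimensional correction'' enters.
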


In the next result, we show that, for $N$ and $L$ large enough, the quantities $\| \mathcal{M}_{N}f \|_{\Phi}$, 
$\| M_{\phi}^{dis}f \|_{\Phi}$ and $\| M_{\phi, L}^{*} f \|_{\Phi}$ are mutually comparable, with bounds independent of $f$.

\begin{theorem} \label{norm comparable}
Let $\Phi$ be an Orlicz function such that $0 < i(\Phi) \leq I(\Phi) < \infty$. For a radial function $\phi \in \mathcal{S}(\mathbb{H}^n)$ with $\int \phi \neq 0$, we have
\[
\| \mathcal{M}_{N}f \|_{\Phi} \approx \| M_{\phi, L}^{*} f \|_{\Phi} \approx \| M_{\phi}^{dis}f \|_{\Phi},
\]
for all $f \in \mathcal{S}'(\mathbb{H}^n)$, where $N$ and $L$ are large enough.
\end{theorem}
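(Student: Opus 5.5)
The plan is to prove the cycle of inequalities
\[
\| M_{\phi}^{dis}f \|_{\Phi} \lesssim \| \mathcal{M}_{N}f \|_{\Phi} \lesssim \| M_{\phi, L}^{*} f \|_{\Phi} \lesssim \| M_{\phi}^{dis}f \|_{\Phi},
\]
where $N$ is taken so large that $\|\phi\|_{\mathcal{S}(\mathbb{H}^n),N}$ is finite, and $L \geq L_{\theta}$ for a suitable $\theta$.

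\emph{First inequality.} Since $\phi/\|\phi\|_{\mathcal{S}(\mathbb{H}^n),N} \in \mathcal{F}_N$, we get $M_{\phi}^{dis}f \leq \|\phi\|_{\mathcal{S}(\mathbb{H}^n),N}\, \mathcal{M}_N f$ pointwise. Proposition \ref{quasi-norm} (ii) and (iv) then give the bound.

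\emph{Second inequality.} Here we use the standard pointwise estimate on homogeneous groups (Folland--Stein, \emph{Hardy spaces on homogeneous groups}, Ch.~2; cf.\ the proof of \cite[Theorem 3.5]{Fang}): for $L$ fixed and $N = N(L)$ large,
\[
\mathcal{M}_N f(z) \lesssim M_{\phi,L}^{*} f(z), \qquad z \in \mathbb{H}^n.
\]
It is proved in three steps. First, since $\int\phi \neq 0$, every $\psi \in \mathcal{F}_N$ admits a decomposition $\psi = \sum_{k \geq 0} \eta^{(k)} \ast \phi_{2^{-k}}$ with Schwartz seminorms of $\eta^{(k)}$ decaying like $2^{-kA}$, with $A$ as large as desired once $N$ is large. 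Second, one writes $f \ast \psi_t$ at scale $2^{-j} \leq t < 2^{-j+1}$ as $\sum_k f \ast \phi_{2^{-j-k}} \ast (\text{rescaled } \eta^{(k)})$. Third, each convolution is bounded by $M_{\phi,L}^*f(z)$ times $\int |\eta^{(k)}_{\cdot}(w)| (1+4^{j+k}\rho(z^{-1}w)^2)^{L}\,dw \lesssim 2^{k(2L - A)}$, and the resulting series is summable. This estimate is purely pointwise and does not see $\Phi$, so monotonicity of the quasi-norm finishes this step. I will cite this estimate rather than reprove it.

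\emph{Third inequality (main step).} Choose $0<\theta<\min\{1, i(\Phi)\}$ and a lower type $p_{\Phi}^{-}>\theta$. This is possible since $i(\Phi)>0$: pick $p_\Phi^-$ with $\theta<p_\Phi^-<i(\Phi)$. By Lemma \ref{lower upper fits}, $\Phi_{1/\theta}$ has lower type $p_\Phi^-/\theta>1$ and upper type $p_\Phi^+/\theta<\infty$. Lemma \ref{M star y phi} with $L\ge L_\theta$ gives $M_{\phi,L}^* f \lesssim [M((M_\phi^{dis}f)^\theta)]^{1/\theta}$. Then Lemma \ref{fits} and Proposition \ref{M bound 1} applied to $\Phi_{1/\theta}$ yield
\[
\|M_{\phi,L}^*f\|_\Phi^\theta \lesssim \|M((M_\phi^{dis}f)^\theta)\|_{\Phi_{1/\theta}} \lesssim \|(M_\phi^{dis}f)^\theta\|_{\Phi_{1/\theta}} = \|M_\phi^{dis}f\|_\Phi^\theta .
\]
We also use that the implicit constant in Lemma \ref{M star y phi} passes through the norm, which follows from Proposition \ref{quasi-norm}(ii),(iv).

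The main obstacle is the choice of $\theta$ in the third inequality: it must make $\Phi_{1/\theta}$ of lower type strictly greater than $1$, so that the boundedness of $M$ on $L^{\Phi_{1/\theta}}$ applies, while keeping $\theta<1$ as Lemma \ref{M star y phi} requires. The admissible choice above settles this. A minor point is the case where $M_\phi^{dis}f \notin L^\Phi$, in which the inequality is trivial.
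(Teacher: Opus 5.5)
Your proposal is correct and follows essentially the same route as the paper. Both use the trivial pointwise bound $M_{\phi}^{dis}f \lesssim \mathcal{M}_N f$, the cited pointwise estimate $\mathcal{M}_N f \lesssim M_{\phi,L}^{*}f$ from the Folland--Stein/Fang argument, and Lemma \ref{M star y phi} combined with Lemma \ref{fits} and Proposition \ref{M bound 1} applied to $\Phi_{1/\theta}$ with $0<\theta<\min\{1,i(\Phi)\}$. Your explicit check that $\Phi_{1/\theta}$ has lower type $p_{\Phi}^{-}/\theta>1$ is exactly what the paper leaves implicit.

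There is one small slip in your sketch of the cited estimate. Since $\mathbb{H}^n$ is noncommutative, the decomposition must be $\psi=\sum_k \phi_{2^{-k}}\ast\eta^{(k)}$, with $\phi$ on the left. Only then does $f\ast\psi_t=\sum_k (f\ast\phi_{2^{-j-k}})\ast(\text{rescaled } \eta^{(k)})$, which is the form your second step uses.
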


\begin{proof}
It is clear that $M_{\phi}^{dis}f(z) \leq M_{\phi, L}^{*} f(z)$ for all $z \in \mathbb{H}^n$. Now, for $0 < \theta < \min \{1, i(\Phi) \}$, from Lemma \ref{M star y phi}, Lemma \ref{fits} and Proposition \ref{M bound 1}, it follows that
\[
\| M_{\phi, L}^{*} f \|_{\Phi} \approx \| M_{\phi}^{dis}f \|_{\Phi}.
\]
On the other hand, we have that $M_{\phi}^{dis}f(z) \leq \mathcal{M}_N f(z)$ for all $z \in \mathbb{H}^n$. Thus,
\[ 
\| M_{\phi, L}^{*} f \|_{\Phi} \lesssim \| \mathcal{M}_N f \|_{\Phi}.
\]
In the proof of \cite[Theorem 3.2]{Fang}, the authors shown, for $N$ and $L$ large enough 
and $\tau \in \mathcal{S}(\mathbb{H}^n)$ satisfying $\| \tau \|_{\mathcal{S}(\mathbb{H}^{n}), \, N} \leq 1$, that
\[
|(f \ast \tau_{2^{-j}})(z)| \lesssim M_{\phi, L}^{*} f (z), \,\,\,\, \text{for all} \,\, z \in \mathbb{H}^n.
\]
Being $\tau$ and $j$ arbitrary, one obtains $\mathcal{M}_N f(z) \lesssim M_{\phi, L}^{*} f (z)$ for all $z \in \mathbb{H}^n$, and with them 
the theorem.
\end{proof}

\begin{definition} \label{Orlicz Hardy space}
Given an Orlicz funtion $\Phi$ of positive lower type $p_{\Phi}^{-}$ and positive upper type $p_{\Phi}^{+}$, we define
the Orlicz-Hardy space $H^{\Phi}(\mathbb{H}^{n})$ as the set of all $f \in S^{\prime}(\mathbb{H}^{n})$ for which 
$\mathcal{M}_{N}f \in L^{\Phi}(\mathbb{H}^{n})$, where $N$ is large enough in the sense of Theorem \ref{norm comparable}. In this case 
we set $\| f \|_{H^{\Phi}(\mathbb{H}^{n})} = \| \mathcal{M}_{N}f \|_{\Phi}$.
\end{definition}

Now, we introduce the definition of $\Phi$-atom in $\mathbb{H}^n$.

\begin{definition} \label{atom def}
Let $\Phi$ be an Orlicz function of positive lower type $p_{\Phi}^{-}$ and positive upper type $p_{\Phi}^{+}$ with 
$0 < i(\Phi) \leq I(\Phi) < \infty$, $\max\{ 1, I(\Phi)  \} < p_0 \leq \infty$ and $m \in \mathbb{N}$. Fix an integer 
$m \geq m_{\Phi} : = Q \left(\lfloor i(\Phi)^{-1} - 1 \rfloor + 1 \right)$. A measurable function $a(\cdot)$ 
on $\mathbb{H}^{n}$ is called a $(\Phi, p_{0}, m)$ - atom if there exists a $\rho$ - ball $B$ such that \newline
$a_{1})$ $\textit{supp}\left( a\right) \subset B$, \newline
$a_{2})$ $\left\Vert a \right\Vert_{L^{p_{0}}(\mathbb{H}^{n})} \leq 
\left\vert B \right\vert^{\frac{1}{p_{0}}} \Vert \chi_B \Vert_{\Phi}^{-1}$, \newline
$a_{3})$ $\int a(z) \, z^{I} \, dz = 0$ for all multiindex $I$ such that $d(I) \leq m$.
\end{definition}
A such atom is also called an atom centered at the $\rho$ - ball $B$. Following the ideas in the proof of \cite[Proposition 4.1]{Fang}, and adapting them to our context, one obtains that every $(\Phi, p_{0}, m)$ - atom 
$a(\cdot)$ belongs to $H^{\Phi}(\mathbb{H}^{n})$. Moreover, there exists an universal constant $C > 0$ 
such that $\| a \|_{H^{\Phi}(\mathbb{H}^n)} \leq C$ for all $(\Phi, p_{0}, m)$ - atom $a(\cdot)$.

\begin{remark} \label{atomo infinito}
We observe that every $(\Phi, \infty, m)$ - atom is an $(\Phi, p_0, m)$ - atom for any $p_0 \in (1, \infty)$.
\end{remark}

Now, we shall formulate an atomic decomposition theorem in terms of $(\Phi, \infty, m)$ - atoms. Before establishing this result, we introduce the following three constants (see \cite[p. 81]{Foll-St})
\[
T_1 = 9 \gamma \beta^{N}, \,\,\,\,\,\, T_2 = 2 \gamma^2 T_1, \,\,\,\, \text{and} \,\,\,\, T_3 = 3 \gamma T_2,
\] 
where $\gamma$ is the constant in the inequality \cite[(1.8)]{Foll-St} (when $G = \mathbb{H}^n$ and $|\cdot|=\rho(\cdot)$ is the Koranyi norm given by (\ref{Koranyi norm}), $\gamma = 1$), $\beta$ is  the constant in \cite[Theorem 1.33]{Foll-St} (we observe that $\beta \geq 1$, see \cite[p. 29]{Foll-St}), and $N$ is as in Definition \ref{Orlicz Hardy space}.

\begin{theorem} \label{infinite atomic decomp}
Let $\Phi$ be an Orlicz function such that $0 < i(\Phi) \leq I(\Phi) < \infty$. Then every $f \in H^{\Phi}(\mathbb{H}^n)$ can be written as
\begin{equation}
f=\sum\limits_{j=1}^{\infty } \lambda_{j}a_{j}  \label{serie atomica}
\end{equation}
in $S^{\prime }(\mathbb{H}^{n}),$ where $\left\{ \lambda_{j} \right\}_{j=1}^{\infty}$ is a sequence of non-negative numbers, the $a_{j}$'s are 
$(\Phi, \infty, m)$ - atoms supported on $\rho$ - balls $B_j$ and
\begin{equation} \label{norma atomica}
\left\Vert \left\{ \sum_{j} \left( \frac{\lambda_j \chi_{B_j}}{\Vert \chi_{B_j} \Vert_{\Phi}} \right)^{\theta} \right\}^{1/\theta} 
\right\Vert_{\Phi} \leq C \Vert f \vert_{H^{\Phi}}, \,\,\,\,\,\,\,\, \forall \,\,\, 0 < \theta \leq 1,
\end{equation}
where $C$ is an universal positive constant which does not depend on $\left\{ \lambda_{j}\right\}_{j=1}^{\infty}$, 
$\left\{ B_{j}\right\}_{j=1}^{\infty}$ and $f$.
\end{theorem}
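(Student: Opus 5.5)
The plan is to follow the Calderón–Zygmund decomposition of Folland–Stein \cite[Chapter 3]{Foll-St} for $H^p$ on homogeneous groups, with the level sets adapted to the Orlicz setting. Fix $f \in H^{\Phi}(\mathbb{H}^n)$ and, for each $k \in \mathbb{Z}$, set $\Omega_k := \{ z : \mathcal{M}_N f(z) > 2^k \}$. Since $\mathcal{M}_N f \in L^{\Phi}$ and $\Phi$ has positive lower type, Lemma \ref{modular 1} gives $\int \Phi(\mathcal{M}_N f) < \infty$. Together with Lemma \ref{lower estim}(ii), this shows that each $\Omega_k$ is an open set of finite measure. A Whitney-type covering of $\Omega_k$ by $\rho$-balls $\{B_{k,i}\}_i$ with bounded overlap (the constants $T_1, T_2, T_3$ enter here) and a subordinate smooth partition of unity $\{\zeta_{k,i}\}$ give the Folland–Stein decomposition $f = g_k + \sum_i b_{k,i}$. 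Here $b_{k,i} = (f - P_{k,i})\zeta_{k,i}$, with $P_{k,i} \in \mathcal{P}_m$ the projection making the moments of order $\le m$ vanish. The estimates \cite[Lemmas 3.19--3.25]{Foll-St} are pointwise/local, so they carry over verbatim: $|g_k| \lesssim 2^k$, and $\sum_i \mathcal{M}_N b_{k,i}$ is controlled by $\mathcal{M}_N f\,\chi_{\Omega_k}$ plus tails $2^k \sum_i (r_{k,i}/(r_{k,i}+\rho(z_{k,i}^{-1}z)))^{Q+m+1}$.

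The first genuine step is the convergence $g_k \to f$ in $\mathcal{S}'$ as $k\to\infty$, and $g_k \to 0$ as $k \to -\infty$. For $k \to \infty$, I will use $\|\sum_i b_{k,i}\|$ controlled by $\mathcal{M}_N f \chi_{\Omega_k}$; the modular of $\mathcal{M}_N f \chi_{\Omega_k}$ tends to $0$ by dominated convergence, and an estimate $|\langle h,\varphi\rangle| \lesssim \inf_{B(e,1)} \mathcal{M}_N h$ converts this to $\mathcal{S}'$ convergence. For $k\to -\infty$, I use $\|g_k\|_\infty \lesssim 2^k$ together with an $L^1$-type bound $|g_k| \lesssim 2^k$ on $\Omega_k$ and $|g_k|\le \mathcal{M}_Nf$ off $\Omega_k$, dominated through the lower type of $\Phi$ near $0$ (Lemma \ref{lower estim}). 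Then $f = \sum_k (g_{k+1} - g_k)$ in $\mathcal{S}'$. Rewriting $g_{k+1}-g_k = \sum_i h_{k,i}$ as in \cite[p. 85--87]{Foll-St}, each $h_{k,i}$ is supported in $T_3 B_{k,i}$, satisfies $\|h_{k,i}\|_\infty \le C 2^k$, and has vanishing moments up to order $m$. Set $\lambda_{k,i} := C 2^k \|\chi_{T_3B_{k,i}}\|_{\Phi}$ and $a_{k,i} := \lambda_{k,i}^{-1} h_{k,i}$; these are $(\Phi,\infty,m)$-atoms. After relabeling, this yields (\ref{serie atomica}).

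For (\ref{norma atomica}), I observe that $\lambda_{k,i}\chi_{B_{k,i}^*}/\|\chi_{B_{k,i}^*}\|_{\Phi} = C2^k \chi_{B^*_{k,i}}$, where $B^*_{k,i}=T_3B_{k,i}$. By bounded overlap, $\sum_i \chi_{B^*_{k,i}} \lesssim \chi_{\widetilde\Omega_k}$, where $\widetilde{\Omega}_k$ is a fixed dilate. The dilation is handled by $\chi_{B^*_{k,i}} \lesssim (M\chi_{B_{k,i}})^{2}$ with $B_{k,i}\subset \Omega_k$. Thus
\[
\Big\{\sum_{k,i}(2^k\chi_{B^*_{k,i}})^{\theta}\Big\}^{1/\theta} \lesssim \Big\{\sum_k 2^{k\theta}\chi_{\Omega_k}\Big\}^{1/\theta} \lesssim \mathcal{M}_N f,
\]
once the enlargement has been removed. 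The last bound holds because on $\Omega_k\setminus\Omega_{k+1}$ the geometric series is dominated by $2^{k}$. To remove the enlargement, I apply Theorem \ref{Feff-Stein max ineq} to the space $L^{\Phi_{1/s}}$ with exponent $r = 2/\theta$ after raising to a small power $s$. By Lemma \ref{fits} and Lemma \ref{lower upper fits}, this requires choosing $s$ such that $\Phi_{1/s}$ has lower type $>1$, i.e. $s < i(\Phi)$ with a strictly admissible type. Note that for $\theta \le 1$ the estimate for $\theta$ implies it for larger exponents only in one direction, so I need to carry this out for each $\theta$; the constant should be uniform because $\ell^\theta$ sums of a geometric sequence are uniformly comparable.

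I expect the main obstacle to be exactly this last step: making the vector-valued maximal inequality produce a constant independent of $\theta\in(0,1]$ when passing from the dilated balls $B^*_{k,i}$ to $\Omega_k$. If Theorem \ref{Feff-Stein max ineq} gives $\theta$-dependent constants, I will instead avoid dilation entirely. The fallback is to prove the pointwise bound $\sum_i\chi_{B^*_{k,i}} \le C\chi_{\Omega_k}$ directly, by choosing the Whitney balls so that $T_3 B_{k,i}\subset\Omega_k$, which the Folland–Stein construction permits. Then the bound is pointwise and uniform in $\theta$, and the Orlicz quasi-norm estimate follows from Proposition \ref{quasi-norm}(iv).
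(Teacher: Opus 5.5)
Your outline is correct, but it puts the work in a different place from the paper. The paper never runs the Calder\'on--Zygmund decomposition on a general distribution. It first takes $f\in H^{\Phi}(\mathbb{H}^n)\cap L^{s}(\mathbb{H}^n)$ with $s>\max\{I(\Phi),1\}$. Then the $b_{j,k}$ are genuine functions, $\sum_k b_{j,k}$ converges trivially, and $|g_j|\lesssim 2^j$ is immediate from $|f|\le \mathcal{M}_N f\le 2^j$ a.e.\ off $O_j$. The general case then follows from the density of $H^{\Phi}\cap L^{s}$ in $H^{\Phi}$ and a limiting argument as in Stein.

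Your direct route avoids that density step. The paper only sketches it, and for the functional in (\ref{norma atomica}) it requires gluing the decompositions of infinitely many pieces. The price is that you must handle the distributional decomposition yourself. Your $\inf_{B(e,1)}\mathcal{M}_N$ device does give the $\mathcal{S}'$-convergence of $\sum_i b_{k,i}$. However, $\|g_k\|_\infty\lesssim 2^k$ does not carry over ``verbatim'' from local estimates, because $f\chi_{\Omega_k^c}$ has no meaning for a distribution. You should proceed in three steps:
\begin{itemize}
\item Show $g_k$ is a function, e.g.\ $\mathcal{M}_N g_k\in L^{r}$ for some $r>1$. This follows from the tail bound, $\mathcal{M}_N f\,\chi_{\Omega_k^c}\le 2^k$, and Lemma \ref{lower estim}(ii).
\item Bound $g_k$ a.e.\ on $\Omega_k^c$ by Lebesgue differentiation, using that $\sum_i (b_{k,i}\ast\phi_t)(z)\to 0$ as $t\to 0$ for a.e.\ $z\notin\Omega_k$.
\item On $\Omega_k$, use $g_k=\sum_i P_{k,i}\zeta_{k,i}$.
\end{itemize}

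For (\ref{norma atomica}), the paper uses that $h_{j,k}$ is supported in $T_2B_{j,k}$, which lies in $O_j$ by (w3). Together with (w4) this gives the pointwise bound $\sum_k\chi_{T_2B_{j,k}}\le L\chi_{O_j}$, so no maximal inequality is needed. This is exactly your fallback, and no re-choice of Whitney balls is required; in fact $T_3B_{j,k}\subset O_j$ is excluded by (w3). Your primary route with $T_3$-balls does work for each fixed $\theta$, via $\chi_{T_3B}\lesssim M\chi_B$ and Theorem \ref{Feff-Stein max ineq} in $L^{\Phi_{1/s}}$ with small $s$. However, your intermediate claim that the $T_3$-dilates have bounded overlap is unjustified: they reach $\Omega_k^c$, and (w4) controls only the $T_2$-dilates.

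Finally, the obstacle you single out is illusory. For a fixed decomposition the constant cannot be uniform in $\theta$. At any point lying in two balls with nonzero terms $a_1,a_2$, the $\ell^\theta$-sum is at least $2^{1/\theta}\min\{a_1,a_2\}\to\infty$ as $\theta\to 0$, while $\|f\|_{H^{\Phi}}$ stays fixed. Both your argument and the paper's produce the factor $\bigl(L/(1-2^{-\theta})\bigr)^{1/\theta}$, so your claim that the geometric $\ell^\theta$-sums are uniformly comparable is false. Nothing more is needed, though. The statement only asks that $C$ be independent of $f$, $\{\lambda_j\}$ and $\{B_j\}$. Also, $\ell^{\theta_1}\hookrightarrow\ell^{\theta_2}$ contractively for $\theta_1<\theta_2$, so the bound for small $\theta$ already gives it for every larger $\theta$.
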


\begin{proof}
Let $f \in H^{\Phi}(\mathbb{H}^n) \cap L^{s}(\mathbb{H}^n)$ with $s \in (\max \{I(\Phi), 1\},\infty)$ and $m \geq m_{\Phi}$. For any $j \in \mathbb{Z}$ and $N$ large enough, we set
\[
O_j = \{ z \in \mathbb{H}^n : \mathcal{M}_N f(z) > 2^{j} \}.
\]
It is clear that $O_j \supseteq O_{j+1}$ for all $j \in \mathbb{Z}$. For every $j \in \mathbb{Z}$, by \cite[Whitney Lemma 1.67]{Foll-St} and taking into account the constants $T_1$, $T_2$ and $T_3$ previously defined, there exist a sequence of points $\{ z_{j,k} \}_{k \in \mathbb{N}} \subset O_j$ and a sequence of scalars $\{ r_{j,k} \}_{k \in \mathbb{N}} \subset (0, \infty)$ such that

(w1) $O_j = \cup_{k=1}^{\infty} B(z_{j,k}, r_{j,k})$,

(w2) the family of balls $\{ B(z_{j,k}, r_{j,k}/4) \}_{k \in \mathbb{N}}$ are disjoint,

(w3) $B(z_{j,k}, T_2 r_{j,k}) \cap O_j^c = \emptyset$, but $B(z_{j,k}, T_3 r_{j,k}) \cap O_j^c \neq \emptyset$,

(w4) there exists $L \in \mathbb{N}$ such that no point of $O_j$ lies in more than $L$ of the balls $B(z_{j,k}, T_2 r_{j,k})$.

Now, by Calder\'on-Zygmund decomposition given in \cite[Chapter 3]{Foll-St}, we can decomposed to $f$ as
\[
f = b_j  + g_j, \,\,\,\,\,\,\,\, b_j = \sum_k b_{j,k}, \,\,\,\,\,\,\,\, b_{j,k} = (f - P_{j,k}) \zeta_{j,k},
\]
where $\zeta_{j,k} \in C^{\infty}_{0}(B(z_{j,k}, 2 r_{j,k}))$, $\int b_{j,k}(z) z^{I} dz = 0$ for all $d(I) \leq m$ and $|g_j| \lesssim 2^j$. We put $\widetilde{B}_{j,k} := B(z_{j,k}, T_1 r_{j,k})$. By \cite[Lemmas 3.12 and 3.13]{Foll-St}, we have that
\[
\mathcal{M}_N b_{j,k}(z) \lesssim \mathcal{M}_N f(z), \,\,\,\,\, \text{if} \,\, z \in \widetilde{B}_{j,k},
\]
and 
\[
\mathcal{M}_N b_{j,k}(z) \lesssim 2^{j} \left( r_{j,k}/\rho(z_{j,k}^{-1} \cdot z) \right)^{Q+m}, \,\,\,\,\, \text{if} \,\, z \notin 
\widetilde{B}_{j,k}.
\]
So,
\[
\| \mathcal{M}_N b_j \|_{\Phi} \lesssim \left\| \sum_k \mathcal{M}_N b_{j, k} \right\|_{\Phi} \lesssim 
\left\| \sum_k \mathcal{M}_N f \cdot \chi_{\widetilde{B}_{j,k}} \right\|_{\Phi}
\]
\[ 
+ \left\| \sum_k 2^{j} \left( r_{j,k}/\rho(z_{j,k}^{-1} \cdot (\cdot)) \right)^{Q+m} \cdot \chi_{\widetilde{B}_{j,k}^{c}} \right\|_{\Phi}
\]
\[
\lesssim \left\| \mathcal{M}_N f \cdot \chi_{O_j} \right\|_{\Phi} +
\left\| \sum_k 2^{j} \left( M\chi_{\widetilde{B}_{j,k}} \right)^{\frac{Q+m}{Q}} \right\|_{\Phi}
\]
Since $m \geq m_{\Phi}$, from Theorem \ref{Feff-Stein max ineq}, it follows that
\[
\| \mathcal{M}_N b_j \|_{\Phi} \lesssim \left\| \mathcal{M}_N f \cdot \chi_{O_j} \right\|_{\Phi},
\]
and so, by Lemma \ref{modular 2},
\[
\| f - g_j \|_{H^{\Phi}(\mathbb{H}^n)} = \| b_j \|_{H^{\Phi}(\mathbb{H}^n)} = \| \mathcal{M}_N b_j \|_{\Phi} \lesssim  
\left\| \mathcal{M}_N f \cdot \chi_{O_j} \right\|_{\Phi} \to 0
\]
as $j \to +\infty$. On the other hand, we have $g_j \to 0$ uniformly as $j \to -\infty$. Then,
\[
f = \sum_{j=-\infty}^{+\infty} (g_{j+1} - g_j) \,\,\,\,\, \text{in} \,\, \mathcal{S}'(\mathbb{H}^n).
\]
Now, by \cite[p. 101-102]{Foll-St}, we can write $g_{j+1} - g_j = \sum_{k=1}^{\infty} h_{j,k}$ and $f = \sum_{j,k} h_{j,k}$ in 
$\mathcal{S}'(\mathbb{H}^n)$. Moreover, $h_{j,k}$ is supported on the ball $B(z_{j,k}, T_2 r_{j,k}) =: T_2 B_{j,k}$, $| h_{j,k} (z)| \leq C 2^{j}$ for 
all $z$, and $\int h_{j,k}(z) z^{I} dz = 0$ for all $d(I) \leq m$. Putting
\begin{equation} \label{ajk and lambdajk}
a_{j,k} := \frac{h_{j,k}}{\lambda_{j,k}}, \,\,\,\, \text{where} \,\, \lambda_{j,k} := C 2^{j} \| \chi_{T_2 B_{j,k}} \|_{\Phi},
\end{equation}
we have that the $a_{j,k}$'s are $(\Phi, \infty, m)$ - atoms and $f = \sum_{j,k} \lambda_{j,k} a_{j,k}$ in $\mathcal{S}'(\mathbb{H}^n)$. By 
(w1), (w4) and (\ref{ajk and lambdajk}), it results
\[
\left\| \left\{ \sum_{j, k} \left( \frac{\lambda_{j,k} \chi_{T_2 B_{j,k}}}{\| \chi_{T_2 B_{j,k}} \|_{\Phi}} \right)^{\theta} 
\right\}^{1/\theta} \right\|_{\Phi} \approx \left\| \left\{ \sum_{j=-\infty}^{+\infty} 2^{j \theta} \chi_{O_j} \right\}^{1/\theta} 
\right\|_{\Phi}
\]
Since $O_j \supseteq O_{j+1}$ for all $j \in \mathbb{Z}$ and $0 < \theta \leq 1$, we have that
\[
\sum_{j=-\infty}^{+\infty} \left( 2^j \chi_{O_j}(z)  \right)^{\theta} \approx
\left( \sum_{j=-\infty}^{+\infty} 2^j \chi_{O_j}(z) \right)^{\theta} \approx
\left( \sum_{j=-\infty}^{+\infty} 2^j \chi_{O_j \setminus O_{j+1}}(z) \right)^{\theta},
\]
for all $z \in \mathbb{H}^n$. So,
\[
\left\| \left\{ \sum_{j, k} \left( \frac{\lambda_{j,k} \chi_{T_2 B_{j,k}}}{\| \chi_{T_2 B_{j,k}} \|_{\Phi}} \right)^{\theta} 
\right\}^{1/\theta} \right\|_{\Phi} \lesssim
\left\| \sum_{j=-\infty}^{+\infty} 2^j \chi_{O_j \setminus O_{j+1}} \right\|_{\Phi}
\]
Now, taking into account that the family of sets $\{ O_j \setminus O_{j+1} \}_{j \in \mathbb{Z}}$ is disjoint, for any $\mu > 0$ we have that
\[
\int_{\mathbb{H}^n} \Phi\left( \sum_{j=-\infty}^{+\infty} \frac{2^j \chi_{O_j \setminus O_{j+1}}(z)}{\mu} \right) dz =
\sum_{j=-\infty}^{+\infty} \int_{O_j \setminus O_{j+1}} \Phi\left(  \frac{2^j}{\mu} \right) dz \approx
\int_{\mathbb{H}^n} \Phi\left( \frac{\mathcal{M}_N f(z)}{\mu} \right) dz,
\]
which leads to
\[
\left\| \left\{ \sum_{j, k} \left( \frac{\lambda_{j,k} \chi_{T_2 B_{j,k}}}{\| \chi_{T_2 B_{j,k}} \|_{\Phi}} \right)^{\theta} 
\right\}^{1/\theta} \right\|_{\Phi} \lesssim
\| f \|_{\Phi}.
\]
Proceeding as in the proof of \cite[Lemma 4.13]{Nakai}, for $s \in (I(\Phi),\infty)$, one obtains that 
$H^{\Phi}(\mathbb{H}^n) \cap L^{s}(\mathbb{H}^n)$ is dense in $H^{\Phi}(\mathbb{H}^n)$. From this and an argument similar to 
\cite[p. 109]{Elias}, we have an atomic decomposition for any $f \in H^{\Phi}(\mathbb{H}^n)$ satisfying (\ref{norma atomica}).  
\end{proof}

The following auxiliary result will be useful to get our main theorem of Section \ref{main thm}.

\begin{proposition} \label{max on atoms}
Let $\Phi$ be an Orlicz function such that $0 < i(\Phi) \leq I(\Phi) < \infty$, and let
$\max\{ 1, I(\Phi)  \} < p_0 < \infty$ and $0 < \theta < \min\{ 1, i(\Phi) \}$. Then, for any non-negative sequence 
$\{ k_j \}_{j=1}^{\infty}$, $r \geq 1$, and any sequence $\{ a_j \}_{j=1}^{\infty}$ of $(\Phi, p_0, m)$-atoms such that every 
atom $a_j$ is supported on the $\rho$ - ball  $B_j$, we have
\[
\left\| \sum_{j=1}^{\infty} k_j \chi_{r B_j} M a_j \right\|_{\Phi} \lesssim 
\left\| \left\{ \sum_{j=1}^{\infty} \left( \frac{ k_j \chi_{B_j}}{\| \chi_{B_j} \|_{\Phi}} \right)^{\theta} 
\right\}^{1/\theta} \right\|_{\Phi},
\]
where $M$ is the Hardy-Littlewood maximal operator and the implicit constant does not depend on $\{ k_j \}_{j=1}^{\infty}$, 
$\{ a_j \}_{j=1}^{\infty}$, and $\{ B_j \}_{j=1}^{\infty}$.
\end{proposition}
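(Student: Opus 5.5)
The plan is to reduce everything to Proposition \ref{b_j functions}, applied to the functions $b_j := \chi_{rB_j} M a_j$, and then to remove the dilation $r$ using the vector-valued maximal inequality (Theorem \ref{Feff-Stein max ineq}).

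\textbf{Step 1: reduction to the $\Phi_{1/\theta}$-quasinorm.} Set $b_j := \chi_{rB_j} Ma_j$ and $\tilde k_j := k_j\|\chi_{B_j}\|_{\Phi}^{-1}$. Each $b_j$ is non-negative and supported in the ball $rB_j$. By Lemma \ref{fits},
\[
\Big\|\sum_j k_j b_j\Big\|_{\Phi} = \Big\|\Big(\sum_j k_j b_j\Big)^{\theta}\Big\|_{\Phi_{1/\theta}}^{1/\theta}.
\]
Since $0<\theta<1$, we have $(\sum_j x_j)^\theta\le \sum_j x_j^\theta$. Together with Proposition \ref{quasi-norm} (iv), this bounds the left side by $\|\sum_j k_j^\theta b_j^\theta\|_{\Phi_{1/\theta}}^{1/\theta}$.

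\textbf{Step 2: applying Proposition \ref{b_j functions}.} We apply it to the functions $b_j^\theta$ with exponent $s := p_0/\theta > 1$. Its hypotheses hold after fixing the types:
\begin{itemize}
\item choose a lower type $p_\Phi^- \in (\theta, i(\Phi)]$, so that $\theta < \min\{1, p_\Phi^-\}$;
\item choose an upper type $p_\Phi^+ \in [I(\Phi), p_0)$, so that $s\theta = p_0 > p_\Phi^+$.
\end{itemize}
The $L^s$ bound follows from the $L^{p_0}$-boundedness of $M$ ($p_0>1$) and condition $a_2)$:
\[
\|b_j^\theta\|_{L^{s}} = \|b_j\|_{L^{p_0}}^{\theta} \lesssim \|a_j\|_{L^{p_0}}^{\theta} \le |B_j|^{\theta/p_0}\|\chi_{B_j}\|_{\Phi}^{-\theta} \approx r^{-Q/s}\,|rB_j|^{1/s}\,\|\chi_{B_j}\|_{\Phi}^{-\theta}.
\]
So we may take $A_j = C_r\|\chi_{B_j}\|_{\Phi}^{-\theta}$. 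Proposition \ref{b_j functions}, followed by Lemma \ref{fits} again, gives
\[
\Big\|\sum_j k_j b_j\Big\|_{\Phi} \lesssim \Big\|\Big(\sum_j \tilde k_j^\theta \chi_{rB_j}\Big)^{1/\theta}\Big\|_{\Phi}.
\]

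\textbf{Step 3: removing the dilation $r$ (the main obstacle).} We use the pointwise bound $\chi_{rB_j}\le r^{Q/\epsilon}\,(M\chi_{B_j})^{1/\epsilon}$ for any $\epsilon>0$. This gives
\[
\Big(\sum_j \tilde k_j^\theta \chi_{rB_j}\Big)^{1/\theta} \lesssim \Big(\sum_j \big(M(\tilde k_j^{\theta\epsilon}\chi_{B_j})\big)^{1/\epsilon}\Big)^{1/\theta}.
\]
By Lemma \ref{fits}, its $\Phi$-quasinorm equals the $\Phi_{1/(\theta\epsilon)}$-quasinorm of $\big(\sum_j (M g_j)^{1/\epsilon}\big)^{\epsilon}$ raised to the power $1/(\theta\epsilon)$, where $g_j = \tilde k_j^{\theta\epsilon}\chi_{B_j}$.

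Choose $\epsilon$ small enough that
\begin{itemize}
\item $1/\epsilon > 1$, to serve as the exponent $r$ in Theorem \ref{Feff-Stein max ineq};
\item $\Phi_{1/(\theta\epsilon)}$ has lower type $p_\Phi^-/(\theta\epsilon) > 1$, which holds by Lemma \ref{lower upper fits}.
\end{itemize}
Then Theorem \ref{Feff-Stein max ineq}, applied to $\Phi_{1/(\theta\epsilon)}$ with exponent $1/\epsilon$, removes $M$. Undoing Lemma \ref{fits} returns exactly the right-hand side of the statement, with a constant depending on $r$.

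The only delicate points are keeping the type conditions of Proposition \ref{b_j functions} and Theorem \ref{Feff-Stein max ineq} simultaneously satisfied and tracking the exponent bookkeeping through Lemma \ref{fits}. Both are handled by the choices of $p_\Phi^\pm$ in Step 2 and of $\epsilon$ in Step 3.
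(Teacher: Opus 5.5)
Your proposal is correct and follows the paper's proof essentially step for step: the $\theta$-inequality with Lemma \ref{fits}, then Proposition \ref{b_j functions} applied to $\chi_{rB_j}(Ma_j)^{\theta}$ with $s=p_0/\theta$ and $A_j\approx\|\chi_{B_j}\|_{\Phi}^{-\theta}$, then domination of $\chi_{rB_j}$ by a power of $M\chi_{B_j}$ followed by Theorem \ref{Feff-Stein max ineq}. The paper simply fixes the exponent $2$, i.e.\ your $\epsilon=1/2$; no smallness of $\epsilon$ is needed, since $p_\Phi^-/(\theta\epsilon)>1$ for every $\epsilon\in(0,1)$. Your explicit bound $\chi_{rB_j}\le r^{Q/\epsilon}(M\chi_{B_j})^{1/\epsilon}$ is also more precise than the paper's $\chi_{rB_j}\le (M\chi_{B_j})^{2}$, which omits the $r$-dependent constant.
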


\begin{proof}
Let $0 < \theta < \min\{1, i(\Phi) \}$ be fixed, $r \geq 1$ and $p_{0} > \max \{1, I(\Phi) \}$. Being the Hardy-Littlewood 
maximal operator $M$ is bounded on $L^{p_0}(\mathbb{H}^n)$ (see \cite[Theorem 1, p. 13]{Elias}), we have
\begin{equation} \label{Mfi}
\|  (Ma_j)^{\theta} \|_{L^{p_{0}/\theta}(r B_j)} \lesssim \| a_j \|_{p_0}^{\theta} \lesssim 
\frac{|B_j |^{\theta/p_0}}{\| \chi_{B_j} \|_{\Phi}^{\theta}} 
\lesssim \frac{ |r B_j |^{\theta/p_0}}{\| \chi_{ B_j} \|_{\Phi}^{\theta}}.
\end{equation}
Now, since $0 < \theta < 1$, we apply the $\theta$-inequality given in \cite[1.1.4 (a), on p. 12]{Grafakos} and Lemma \ref{fits}, to obtain
\[
\left\| \sum_{j} k_j \chi_{r B_j} M a_j \right\|_{\Phi} \leq 
\left\| \sum_{j} \left(k_j \, \chi_{r B_{j}} \, Ma_j \right)^{\theta} \right\|^{1/\theta}_{\Phi_{1/\theta}} =:J_{\theta}.
\]
Then, taking into account (\ref{Mfi}), by Proposition \ref{b_j functions} with $b_j = \left( \chi_{r B_{j}} \, (Ma_j)^{\theta} \right)$, 
$A_j = \| \chi_{ B_{j}} \|_{\Phi}^{-\theta}$ and $s= p_0/\theta$, we have that $J_{\theta}$
\begin{equation} \label{Jteta}
\lesssim \left\| \sum_{j} \left( \frac{k_j}{\left\| \chi_{ B_{j}} \right\|_{\Phi}} \right)^{\theta} 
\chi_{r B_{j}} \right\|^{1/\theta}_{\Phi_{1/\theta}}.
\end{equation}
It is easy to check that $\chi_{r B_{j}} \leq (M\chi_{B_j})^{2}$. From this pointwise inequality, the inequality (\ref{Jteta}), 
Theorem \ref{Feff-Stein max ineq} and Lemma \ref{fits}, we have
\begin{eqnarray*}
\left\| \sum_{j} k_j \chi_{r B_j} M a_j \right\|_{\Phi} &\lesssim& 
\left\| \left\{ \sum_{j} \left( \frac{k_j^{\theta/2}}{\left\| \chi_{B_{j}} \right\|^{\theta/2}_{\Phi}} 
(M\chi_{B_j}) \right)^{2}  \right\}^{1/2} \right\|^{2/\theta}_{\Phi_{2/\theta}} \\
&\lesssim& \left\| \left\{ \sum_{j} \left( \frac{ k_j \chi_{B_j}}{\| \chi_{B_j} \|_{\Phi}} \right)^{\theta} 
\right\}^{1/\theta} \right\|_{\Phi}.
\end{eqnarray*}
This concludes the proof.
\end{proof}

\section{Orlicz-Calder\'on Hardy spaces on the Heisenberg group} \label{OCH}

We recall that $\mathcal{P}_{k}$ is the subspace formed by all the polynomials of homogeneous degree at most $k \geq 0$. Now, the argument used to prove \cite[Lemma 2.6]{Zygm} works on $\mathbb{H}^n$ as well. Then, its analogous on $\mathbb{H}^n$, it is as follows.

\begin{lemma} \label{ident pol}
Given a integer $k \geq 0$, there exists $\varphi \in C^{\infty}(\mathbb{H}^n)$ with support on the $\rho$-ball $B(e,1)$ such that for every 
$\lambda > 0$ and every polynomial $p$ of homogeneous degree at most $k$,
\[
p(z) = \int \lambda^{Q} \varphi(\lambda \cdot (w^{-1} \cdot z)) \, p(w) dw =  (p \ast \varphi_{\lambda^{-1}})(z)
\]
holds.
\end{lemma}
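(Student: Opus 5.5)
We construct $\varphi$ by duality in the finite-dimensional space $\mathcal{P}_k$. The plan has two steps: first settle the case $\lambda = 1$, then pass to arbitrary $\lambda$ by dilation, using that $\mathcal{P}_k$ is invariant under both left translations and dilations.

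\emph{Invariance of $\mathcal{P}_k$.} For fixed $z$, the map $w \mapsto p(z \cdot w)$ is again in $\mathcal{P}_k$. Indeed, the group law gives
\[
(z\cdot w) = (x+y,\; t+s+x^tJy),
\]
and each monomial $(x+y)^\alpha (t+s+x^tJy)^\beta$ has homogeneous degree at most $|\alpha|+2\beta$ in $w=(y,s)$, since $x^tJy$ has degree $1$ in $y$. Also $p(\lambda^{-1}\cdot z)\in\mathcal{P}_k$, because $(r\cdot z)^I = r^{d(I)}z^I$.

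\emph{The case $\lambda=1$.} Substituting $u = w^{-1}\cdot z$, so that $w = z\cdot u^{-1}$, and using invariance of Haar measure under translation and inversion, we get
\[
(p\ast\varphi)(z)=\int \varphi(u)\, p(z\cdot u^{-1})\,du .
\]
The function $u\mapsto p(z\cdot u^{-1})$ is a polynomial in $u$ of homogeneous degree at most $k$: inversion is $u\mapsto -u$, which preserves $\mathcal{P}_k$, and left translation preserves it by the previous step. Its value at $u=e$ is $p(z)$.

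It therefore suffices to find $\varphi\in C_0^\infty(B(e,1))$ such that
\[
\int \varphi(u)\,q(u)\,du = q(e) \quad \text{for every } q\in\mathcal{P}_k .
\]
We construct it as follows.
\begin{itemize}
\item The bilinear form $(q_1,q_2)\mapsto\int_{B(e,1)} q_1 q_2$ is an inner product on $\mathcal{P}_k$. Nondegeneracy holds because a polynomial vanishing on the open set $B(e,1)$ is identically zero.
\item More usefully, fix a nonnegative bump $\eta\in C_0^\infty(B(e,1))$ that is positive on a smaller ball. Then $\langle q_1,q_2\rangle=\int \eta\, q_1q_2$ is also an inner product on $\mathcal{P}_k$.
\item By the Riesz representation theorem on this finite-dimensional space, there is a unique $r\in\mathcal{P}_k$ with $\int \eta\, r\, q = q(e)$ for all $q\in\mathcal{P}_k$.
\item Set $\varphi=\eta\, r$. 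This is $C^\infty$, supported in $B(e,1)$, and has the required reproducing property.
\end{itemize}

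\emph{General $\lambda$.} Apply the $\lambda=1$ identity to $\tilde p(z):=p(\lambda^{-1}\cdot z)\in\mathcal{P}_k$, evaluated at the point $\lambda\cdot z$. Then change variables $w=\lambda\cdot v$ using (\ref{homog dim}) together with the identity $\lambda\cdot(v^{-1}\cdot z)=(\lambda\cdot v)^{-1}\cdot(\lambda\cdot z)$. This gives
\[
p(z)=\int \lambda^{Q}\varphi\bigl(\lambda\cdot(v^{-1}\cdot z)\bigr)\,p(v)\,dv ,
\]
and the right-hand side is $(p\ast\varphi_{\lambda^{-1}})(z)$ by the definition $\varphi_t(z)=t^{-Q}\varphi(t^{-1}\cdot z)$.

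The only step needing real care is the invariance of $\mathcal{P}_k$ under left translation, since the group is noncommutative. The homogeneity bookkeeping for the term $x^tJy$ shown above handles it.
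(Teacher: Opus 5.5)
Your proof is correct and is essentially the argument the paper has in mind. The paper gives no proof of its own and only says that Calder\'on--Zygmund's Euclidean proof carries over; that argument is exactly yours: represent evaluation at the origin on the finite-dimensional space $\mathcal{P}_k$ by a $C_0^\infty(B(e,1))$ function, then transfer it using the invariance of $\mathcal{P}_k$ under left translation, inversion and dilation (the noncommutative bookkeeping for $x^tJy$ that you supply). One small nit: since the paper allows complex coefficients, $\int \eta\, q_1 q_2$ is a nondegenerate symmetric bilinear form rather than an inner product. Nondegeneracy follows by testing $q_1$ against $\bar q_1$, and it is all the representation step needs; alternatively, build $r$ with real coefficients and extend by linearity.
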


\begin{remark} \label{der pol}
From this Lemma, one has that $X^{I}p = p \ast X^{I}(\varphi_{\lambda^{-1}})$ for every multi-index $I$.
\end{remark}

Let $L^{q}_{loc}(\mathbb{H}^{n})$, $1 < q < \infty$, be the space of all measurable functions $g$ on $\mathbb{H}^{n}$ that belong locally 
to $L^{q}$ for compact sets of $\mathbb{H}^{n}$. We endowed $L^{q}_{loc}(\mathbb{H}^{n})$ with the topology generated by the seminorms
\[
|g|_{q, \, B} = \left( |B|^{-1} \int_{B} \, |g(w)|^{q}\, dw \right)^{1/q},
\]
where $B$ is a $\rho$-ball in $\mathbb{H}^{n}$ and $|B|$ denotes its Haar measure.

For $g \in L^{q}_{loc}(\mathbb{H}^{n})$, we define a maximal function $\eta_{q, \, \gamma}(g; z)$ as
\[
\eta_{q, \, \gamma}(g; \, z) = \sup_{r > 0} r^{-\gamma} |g|_{q, \, B(z, r)},
\]
where $\gamma$ is a positive real number and $B(z, r)$ is the $\rho$-ball centered at $z$ with radius $r$.

We denote by $E^{q}_{k}$ the quotient space of $L^{q}_{loc}(\mathbb{H}^{n})$ by $\mathcal{P}_{k}$. If 
$G \in E^{q}_{k}$, we define the seminorm $\| G \|_{q, \, B} = \inf \left\{ |g|_{q, \, B} : g \in G \right\}$. The family of all these seminorms induces on $E^{q}_{k}$ the quotient topology.

Given a positive real number $\gamma$, we can write $\gamma = k + t$, where $k$ is a non negative integer and $0 < t \leq 1$. This decomposition is unique.

For $G \in E^{q}_{k}$, we define a maximal function $N_{q, \, \gamma}(G; z)$ as 
\[
N_{q, \, \gamma}(G; z) = \inf \left\{ \eta_{q, \, \gamma}(g; z) : g \in G \right\}.
\]

\begin{lemma} (\cite[Lemma 4.1]{rocha3}) \label{semicont} The maximal function $z \to N_{q; \, \gamma}(G; z)$ associated with a class $G$ in 
$E_{k}^{q}$ is lower semicontinuous.
\end{lemma}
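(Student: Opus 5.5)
To show that $z \mapsto N_{q,\gamma}(G;z)$ is lower semicontinuous, I will prove that for every $\alpha \ge 0$ the set $\{ z : N_{q,\gamma}(G;z) > \alpha \}$ is open. Equivalently, fix $z_0$ and $\alpha < N_{q,\gamma}(G;z_0)$; the goal is to find a neighborhood $U$ of $z_0$ with $N_{q,\gamma}(G;z) > \alpha$ for all $z \in U$. I argue by contradiction. Suppose there are points $z_j \to z_0$ and representatives $g_j \in G$ with $\eta_{q,\gamma}(g_j;z_j) \le \alpha' < N_{q,\gamma}(G;z_0)$ for some fixed $\alpha'$. Write $g_j = g + p_j$, where $g \in G$ is fixed and $p_j \in \mathcal{P}_k$. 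The contradiction will come from extracting a limit polynomial $p$ and showing that $\eta_{q,\gamma}(g+p; z_0) \le \alpha'$.

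\emph{Step 1: the polynomials $p_j$ are bounded.} Fix a ball $B_0 = B(z_0,1)$. For $j$ large, $B_0 \subset B(z_j,2)$ by the triangle inequality for $\rho$. Hence
\[
|g_j|_{q,B_0} \lesssim |g_j|_{q,B(z_j,2)} \le 2^{\gamma}\alpha'.
\]
It follows that $|p_j|_{q,B_0} \le |g|_{q,B_0} + C\alpha'$ is uniformly bounded. Since $\mathcal{P}_k$ is finite dimensional, $|\cdot|_{q,B_0}$ is a norm on it, equivalent to any norm on the coefficients. So the coefficients of the $p_j$ are bounded, and after passing to a subsequence $p_j \to p$ in coefficients. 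In particular $p_j \to p$ uniformly on compact sets.

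\emph{Step 2: pass to the limit.} Fix $r > 0$ and $\epsilon > 0$. For $j$ large, $B(z_0,r) \subset B(z_j, r+\epsilon)$, since $\rho(z_j^{-1}z_0) \to 0$. Then
\[
|g+p_j|_{q,B(z_0,r)} \le \left(\frac{r+\epsilon}{r}\right)^{Q/q} |g_j|_{q,B(z_j,r+\epsilon)} \le \left(\frac{r+\epsilon}{r}\right)^{Q/q} (r+\epsilon)^{\gamma}\alpha'.
\]
By uniform convergence on $B(z_0,r)$, the left side tends to $|g+p|_{q,B(z_0,r)}$. Letting $j \to \infty$ and then $\epsilon \to 0$ gives $r^{-\gamma}|g+p|_{q,B(z_0,r)} \le \alpha'$ for every $r > 0$. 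Therefore $N_{q,\gamma}(G;z_0) \le \eta_{q,\gamma}(g+p;z_0) \le \alpha'$, which is the contradiction.

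The main obstacle is Step 1: the infimum defining $N_{q,\gamma}(G;z)$ ranges over representatives that vary with the point $z$, so one needs compactness of the relevant set of polynomials. This compactness comes from finite dimensionality of $\mathcal{P}_k$ together with the single-scale bound obtained at radius $2$. The ball inclusions used in both steps rely only on the triangle inequality for $\rho$ and left invariance, since $\rho(w^{-1}z) \le \rho(w^{-1}z_j) + \rho(z_j^{-1}z)$.
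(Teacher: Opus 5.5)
Your proof is correct and complete. You choose near-optimal representatives $g_j=g+p_j$ at points $z_j\to z_0$, bound $|p_j|_{q,B(z_0,1)}$ uniformly, and use the finite-dimensionality of $\mathcal{P}_k$ to extract a limit polynomial $p$; passing to the limit ball by ball then gives $\eta_{q,\gamma}(g+p;z_0)\le\alpha'<N_{q,\gamma}(G;z_0)$. The paper itself gives no argument for this lemma and only defers to Lemma 4.1 of the author's earlier work, so there is nothing to compare step by step; your compactness argument is, to my knowledge, the standard way this fact is proved.
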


\begin{definition}
Let $\Phi$ be an Orlicz function, we say that an element $G \in E^{q}_{k}$ belongs to the Orlicz-Calder\'on Hardy space
$\mathcal{H}^{\Phi}_{q, \, \gamma}(\mathbb{H}^{n})$ if the maximal function $N_{q, \, \gamma}(G; \, \cdot \,) \in L^{\Phi}(\mathbb{H}^{n})$. 
In this case, for any $G \in \mathcal{H}^{\Phi}_{q, \, \gamma}(\mathbb{H}^{n})$, we set
\[
\| G \|_{\mathcal{H}^{\Phi}_{q, \, \gamma}(\mathbb{H}^{n})} := \| N_{q, \, \gamma}(G; \, \cdot \,) \|_{\Phi}.
\]
\end{definition}

If $\Phi$ is an Orlicz function with positive lower type $p_{\Phi}^{-}$ and positive upper type $p_{\Phi}^{+}$, by 
Proposition \ref{quasi-norm}, it follows that the couple
$\left( \mathcal{H}^{\Phi}_{q, \, \gamma}(\mathbb{H}^{n}),  \| \cdot \|_{\mathcal{H}^{\Phi}_{q, \, \gamma}(\mathbb{H}^{n})} \right)$ results 
a quasi-normed space.

Now, taking into account Lemma \ref{ident pol} and Remark \ref{der pol}, to follow the proof of \cite[Lemma 3]{calderon}, it obtains the following result.

\begin{lemma} \label{estim pol}
Let $g_1$ and $g_2$ be two representatives of an element $G \in E^q_k$ and $p = g_1 - g_2$ ($p$ is a polynomial of homogeneous degree at 
most $k$). Then, for every multi-index $I$ with $0 \leq d(I) \leq k$, there exists a positive constant $C = C_{I}$ such that
\[
|X^{I}p(z)| \leq C \left[ \eta_{q, \gamma}(g_1; z_1) + \eta_{q, \gamma}(g_2; z_2) \right] 
\left( \rho(z_1^{-1} \cdot z) + \rho(z_2^{-1} \cdot z) \right)^{k+1 - d(I)}
\]
holds for every $z_1$, $z_2$ and $z$ in $\mathbb{H}^n$.
\end{lemma}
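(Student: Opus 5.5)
The plan follows Calder\'on's argument for \cite[Lemma 3]{calderon}, using the reproducing identity of Lemma \ref{ident pol} and its derivative form in Remark \ref{der pol}. A scaling check first: dilating $p$, $g_1$, $g_2$ by $\lambda\cdot$ multiplies the two $\eta_{q,\gamma}$ terms by $\lambda^{\gamma}$ and the left-hand side by $\lambda^{d(I)}$. So the natural exponent produced is $\gamma-d(I)$. It coincides with the stated $k+1-d(I)$ exactly when $t=1$, i.e.\ $\gamma=k+1$, which is the case $\gamma=2$, $k=1$ used later for $\mathcal{H}^{\Phi}_{q,2}(\mathbb{H}^n)$. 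The estimate below is carried out with the factor $\delta^{\gamma-d(I)}$ and read as $\delta^{k+1-d(I)}$ under $t=1$. For $t<1$ the statement as worded cannot hold by this scaling, so the proof establishes exactly the $t=1$ case.

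\textbf{Setup.} Fix $z,z_1,z_2$ and set $\delta:=\rho(z_1^{-1}\cdot z)+\rho(z_2^{-1}\cdot z)$. If $\delta=0$ then $z=z_1=z_2$; this case follows by a limiting argument, since $X^{I}p$ is continuous and the right-hand side is handled by the same bound with $\delta\to0^{+}$, provided the $\eta$'s are finite. For $\delta>0$, apply Remark \ref{der pol} with $\lambda=\delta^{-1}$:
\[
X^{I}p(z)=\int p(w)\,\bigl(X^{I}\varphi_{\delta}\bigr)(w^{-1}\cdot z)\,dw .
\]
By homogeneity, $X^{I}\varphi_{\delta}=\delta^{-d(I)}(X^{I}\varphi)_{\delta}$. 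This kernel is supported in $B(z,\delta)$ as a function of $w$ and is bounded by $C\delta^{-Q-d(I)}$.

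\textbf{Main estimate.} Write $p=g_1-g_2$ and split the integral into two pieces. The quasi-triangle inequality for $\rho$ gives $B(z,\delta)\subset B(z_i,2\delta)$ for $i=1,2$. H\"older's inequality then yields
\[
\Bigl|\int g_i(w)\,(X^{I}\varphi_{\delta})(w^{-1}\cdot z)\,dw\Bigr|\le C\delta^{-d(I)}\,|g_i|_{q,B(z_i,2\delta)}\le C\delta^{\gamma-d(I)}\,\eta_{q,\gamma}(g_i;z_i).
\]
Summing over $i$ gives the claim with constant $C_I$ depending only on $\varphi$, $I$, $Q$ and $q$.

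\textbf{Main obstacle.} The delicate points are the support and scaling bookkeeping for $X^{I}\varphi_{\delta}$ on the non-commutative group. This needs the left-invariance identity $X^{I}(p\ast\psi)=p\ast X^{I}\psi$ together with the homogeneity $X^{I}(f(r\cdot z))=r^{d(I)}(X^{I}f)(r\cdot z)$. It also needs a check that the ball containment holds with a universal constant; here $\gamma=1$ for the Koranyi norm.
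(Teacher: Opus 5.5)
Your proof is correct and follows the route the paper intends. You reproduce $X^{I}p$ via Lemma \ref{ident pol} and Remark \ref{der pol} at scale $\delta=\rho(z_1^{-1}\cdot z)+\rho(z_2^{-1}\cdot z)$, split $p=g_1-g_2$, and bound each piece using $B(z,\delta)\subset B(z_i,2\delta)$, H\"older and the definition of $\eta_{q,\gamma}$; this is Calder\'on's Lemma 3 argument, which the paper cites. Your caveat about the exponent is also right: the argument gives $\delta^{\gamma-d(I)}$, and the literal exponent $k+1-d(I)$ fails when $t<1$ (take $k=0$, $g_1=\rho^{t}$, $g_2=\rho^{t}-s^{t}$, $z=z_1=e$, $z_2=s\cdot u$ with $\rho(u)=1$, which would force $s^{t}\lesssim s$ for all small $s>0$), but this is harmless for the paper, since the lemma is only applied with $z=z_1=z_2$ and ultimately with $\gamma=2$, $k=1$.
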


\begin{lemma} \label{puntual 1} Let $G \in E^{q}_{k}$ with $N_{q, \, \gamma}(G; z_0) < \infty,$ for some $z_0 \in \mathbb{H}^{n}$. Then:

\qquad

$(i)$ There exists a unique $g \in G$ such that $\eta_{q, \, \gamma} (g; z_0) < \infty$ and, therefore, 
$\eta_{q, \, \gamma} (g; z_0) = N_{q, \, \gamma}(G; z_0)$.

$(ii)$ For any $\rho$-ball $B$, there is a constant $c$ depending on $z_0$ and $B$ such that if $g$ is the unique representative of $G$ given in $(i)$, then
$$\|G\|_{q, \, B} \leq |g|_{q, \, B} \leq c \, \eta_{q, \, \gamma} (g; z_0) = c \, N_{q, \, \gamma}(G; z_0).$$

The constant $c$ can be chosen independently of $z_0$ provided that $z_0$ varies in a compact set.
\end{lemma}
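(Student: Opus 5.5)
Part $(i)$ splits into existence and uniqueness, and uniqueness comes first.

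\emph{Uniqueness.} Suppose $g_1, g_2 \in G$ both satisfy $\eta_{q,\gamma}(g_i; z_0) < \infty$. Then $p = g_1 - g_2 \in \mathcal{P}_k$, and we apply Lemma \ref{estim pol} with $z_1 = z_2 = z_0$. For every multi-index $I$ with $d(I) \leq k$ this gives
\[
|X^{I}p(z)| \leq C \left[ \eta_{q,\gamma}(g_1;z_0) + \eta_{q,\gamma}(g_2;z_0) \right] \, 2^{k+1-d(I)} \rho(z_0^{-1}\cdot z)^{k+1-d(I)}.
\]
Put $z = z_0$. Since $k+1-d(I) \geq 1$, the right-hand side vanishes, so $X^{I}p(z_0) = 0$ for all $d(I) \leq k$. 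A polynomial of homogeneous degree at most $k$ is determined by these derivatives at one point. To see this, left-translate so that $z_0 = e$, using left invariance of the $X_i$. At $e$ the $X^I$ agree with the Euclidean derivatives up to a triangular change with respect to homogeneous degree, so the Taylor coefficients $c_I$ with $d(I) \leq k$ all vanish. Hence $p \equiv 0$.

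\emph{Existence.} Since $N_{q,\gamma}(G;z_0) = \inf_{g \in G} \eta_{q,\gamma}(g;z_0) < \infty$, some $g \in G$ has $\eta_{q,\gamma}(g;z_0) < \infty$. By uniqueness, every other representative has infinite $\eta$ at $z_0$. The infimum is therefore attained at this single $g$, which gives $\eta_{q,\gamma}(g;z_0) = N_{q,\gamma}(G;z_0)$.

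For $(ii)$, the first inequality $\|G\|_{q,B} \leq |g|_{q,B}$ holds by the definition of the quotient seminorm. For the second, write $B = B(w,\delta)$ and pick $R = R(z_0,B)$ such that $B \subset B(z_0,R)$, for instance $R = \rho(z_0^{-1}\cdot w) + \delta$ by the triangle inequality. Then
\[
|g|_{q,B} \leq \left( \frac{|B(z_0,R)|}{|B|} \right)^{1/q} |g|_{q,B(z_0,R)} \leq \left(\frac{R}{\delta}\right)^{Q/q} R^{\gamma} \, \eta_{q,\gamma}(g;z_0),
\]
which gives $c = (R/\delta)^{Q/q} R^{\gamma}$. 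When $z_0$ ranges over a compact set $K$, the quantity $\rho(z_0^{-1}\cdot w)$ is bounded by continuity, so $R$ and hence $c$ can be chosen uniformly in $z_0 \in K$.

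The main obstacle is the step showing that vanishing of $X^{I}p(z_0)$ for all $d(I) \leq k$ forces $p = 0$. I would handle it by left translation to $e$ followed by the triangularity of the $X^I$ at $e$ with respect to homogeneous degree. An alternative route is to note that $\mathcal{P}_k$ is invariant under left translations and use Remark \ref{der pol} together with Lemma \ref{ident pol}.
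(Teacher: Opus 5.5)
Your proof is correct and follows the paper's route: uniqueness comes from Lemma \ref{estim pol} with $z=z_1=z_2=z_0$, together with the fact that $X^I p(z_0)=0$ for all $d(I)\le k$ forces $p\equiv 0$ (the paper takes this from the homogeneous Taylor formula it cites rather than proving it), and (ii) is read off from (i) by the ball-inclusion estimate, which you write out and the paper leaves implicit. One small correction to your justification of that fact: at $e$ the operators $X^I$ mix only Euclidean derivatives of the \emph{same} homogeneous degree (e.g.\ $X_1X_{n+1}f(e)=\partial_{x_1}\partial_{x_{n+1}}f(e)-2\,\partial_t f(e)$), and within each degree the change is unitriangular with respect to Euclidean length $|J|$, which is what gives invertibility.
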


\begin{proof}
To prove (i), we assume that $g_1$ and $g_2$ belong to $G$ and both $\eta_{q, \gamma}(g_1; z_0)$ and 
$\eta_{q, \gamma}(g_2; z_0)$ are finite. We call $p$ to the polynomial $g_1 - g_2$ of homogeneous degree at most $k$. Applying 
Lemma \ref{estim pol} with $z = z_1 = z_2 = z_0$ we have $X^{I}p(z_0) = 0$ for every multi-index $I$ such that $0 \leq d(I) \leq k$. Since every polynomial of homogeneous degree at most $k$ can be centered at $z_0$, with $z_0$ being an arbitrary point of 
$\mathbb{H}^n$ (see the formula that appears in \cite[Section 5.2, p. 272]{Bonfi} for the Taylor polynomial of a smooth function), we have that $p \equiv 0$. Then, (i) follows. Finally, (i) implies (ii).
\end{proof}

\begin{corollary} If $\{ G_{j} \}$ is a sequence of elements of $E^{q}_{k}$ converging to $G$ in 
$\mathcal{H}^{\Phi}_{q, \, \gamma}(\mathbb{H}^{n})$, then $\{ G_{j} \}$ converges to $G$ in $E^{q}_{k}$.
\end{corollary}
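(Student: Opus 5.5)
To prove the corollary, I would reduce convergence in $E^{q}_{k}$ to control of each seminorm $\|G_j - G\|_{q,B}$, with $B$ a fixed $\rho$-ball, and get that control from Lemma \ref{puntual 1}(ii). That lemma is pointwise: for any $z_0$ with $N_{q,\gamma}(G_j - G; z_0) < \infty$ it gives $\|G_j - G\|_{q,B} \leq c\, N_{q,\gamma}(G_j - G; z_0)$. Moreover, $c$ is uniform when $z_0$ ranges over a compact set $K$, which I fix to be, say, the closed unit ball $\overline{B(e,1)}$. So the task is to pass from smallness of $\|N_{q,\gamma}(G_j - G;\cdot)\|_{\Phi}$ to smallness of $N_{q,\gamma}(G_j - G; z_0)$ at some point $z_0 \in K$.

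\textbf{Step 1.} Set $h_j := N_{q,\gamma}(G_j - G; \cdot)$. By hypothesis $\|h_j\|_{\Phi} \to 0$. For fixed $\epsilon > 0$, suppose $h_j > \epsilon$ on all of $K$. Proposition \ref{quasi-norm} (iv) and (ii) would then give $\|h_j\|_{\Phi} \geq \epsilon \|\chi_K\|_{\Phi}$. Here $\|\chi_K\|_{\Phi} > 0$ is a fixed positive number, by Proposition \ref{quasi-norm} (i), since $|K| > 0$. For $j$ large this contradicts $\|h_j\|_{\Phi} \to 0$.

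It is cleaner to use measure instead of a single point. The set $E_j = \{z \in K : h_j(z) > \epsilon\}$ is measurable, since $h_j$ is lower semicontinuous by Lemma \ref{semicont}. Also $\|h_j\|_{\Phi} \geq \epsilon \|\chi_{E_j}\|_{\Phi}$. Using the lower type of $\Phi$ (Lemma \ref{lower estim} applied in the modular), $\|\chi_E\|_{\Phi}$ is bounded below in terms of $|E|$, so $|E_j| \to 0$. In particular, for $j$ large there exists $z_j \in K \setminus E_j$, that is, $h_j(z_j) \leq \epsilon$, which is finite.

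\textbf{Step 2.} Apply Lemma \ref{puntual 1}(ii) at $z_0 = z_j$ to the class $G_j - G$. The constant $c = c(B,K)$ does not depend on $j$ because $z_j$ stays in the compact set $K$. This gives $\|G_j - G\|_{q,B} \leq c\,\epsilon$ for all $j$ large. Since $\epsilon$ is arbitrary and this holds for every ball $B$, $G_j \to G$ in the quotient topology of $E^{q}_{k}$.

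\textbf{Main obstacle.} The only delicate point is the lower bound $\|\chi_E\|_{\Phi} \gtrsim$ some positive function of $|E|$, which is what forces a good point $z_j$ to exist inside $K$. I would handle it through the definition of the Luxemburg norm. If $\|\chi_E\|_{\Phi} = \lambda \le 1$, then $1 \ge |E|\,\Phi(1/\lambda) \gtrsim |E|\lambda^{-p_{\Phi}^{-}}$ by Lemma \ref{lower estim}(i), so $\lambda \gtrsim |E|^{1/p_{\Phi}^{-}}$. This bound is all that Step 1 needs. I would also note that $N_{q,\gamma}(G_j - G; \cdot) \le N_{q,\gamma}(G_j;\cdot) + N_{q,\gamma}(G;\cdot)$, which is immediate from the definition via representatives.
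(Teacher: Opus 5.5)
Your proof is correct and uses the same mechanism as the paper: Lemma \ref{puntual 1}(ii), with a constant uniform for $z_0$ in a compact set, combined with monotonicity and homogeneity of the Luxemburg norm. The paper simply applies the lemma at every $z_0 \in B$, which gives $\|G-G_j\|_{q,B}\,\chi_B \leq c\,\chi_B\,N_{q,\gamma}(G-G_j;\cdot)$ pointwise and hence $\|G-G_j\|_{q,B} \leq c\,\|\chi_B\|_{\Phi}^{-1}\,\|G-G_j\|_{\mathcal{H}^{\Phi}_{q,\gamma}(\mathbb{H}^{n})}$ in one line; this makes your selection of a good point $z_j$ unnecessary, and in particular the measure-based variant with the lower-type bound on $\|\chi_E\|_{\Phi}$.
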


\begin{proof} For any $\rho$-ball $B$, by $(ii)$ of Lemma \ref{puntual 1}, we have
$$\| G- G_{j} \|_{q, \, B} \leq c \, \| \chi_{B} \|_{\Phi}^{-1} 
\| \chi_{B} \,\, N_{q, \, \gamma}(G - G_{j}; \, \cdot \,) \|_{\Phi} 
\leq c \, \| G - G_{j} \|_{\mathcal{H}^{\Phi}_{q, \, \gamma}(\mathbb{H}^{n})},$$
which proves the corollary.
\end{proof}

\begin{lemma} \label{series in Eqk} Let $\{ G_{j} \}$ be a sequence in $E^{q}_{k}$ such that for a given point $z_0 \in \mathbb{H}^n$, the series 
$\sum_j N_{q, \, \gamma}(G_{j}; \, z_0 )$ is finite. Then

\qquad

$(i)$ The series $\sum_j G_j$ converges in $E_{k}^{q}$ to an element $G$ and 
$$N_{q, \, \gamma}(G; \, z_0 ) \leq \sum_j N_{q, \, \gamma}(G_{j}; \, z_0 ).$$

$(ii)$ If $g_j$ is the unique representative of $G_j$ satisfying
$\eta_{q, \, \gamma} (g_j; z_0) = N_{q, \, \gamma}(G_j; z_0)$, then $\sum_j g_j$ converges in $L^{q}_{loc}(\mathbb{H}^{n})$ to a function 
$g$ that is the unique representative of $G$ satisfying $\eta_{q, \, \gamma} (g; z_0) = N_{q, \, \gamma}(G; z_0)$
\end{lemma}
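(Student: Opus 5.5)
The plan follows the classical argument of Calder\'on (\cite[Lemma 4]{calderon}) and its Heisenberg version in \cite{rocha3}, using Lemma \ref{puntual 1} to pick good representatives.

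\textbf{Step 1: representatives and local convergence.} For each $j$, Lemma \ref{puntual 1}(i) gives a unique $g_j \in G_j$ with $\eta_{q,\gamma}(g_j; z_0) = N_{q,\gamma}(G_j; z_0)$. Fix a $\rho$-ball $B$. By Lemma \ref{puntual 1}(ii), applied to the representative itself, $|g_j|_{q,B} \le c\,\eta_{q,\gamma}(g_j;z_0)$, where $c$ depends only on $z_0$ and $B$. It is cleaner to see this directly: choose $r$ with $B \subset B(z_0,r)$; then
\[
|g_j|_{q,B} \le (|B(z_0,r)|/|B|)^{1/q}\, r^{\gamma}\, \eta_{q,\gamma}(g_j;z_0).
\]
Since $\sum_j \eta_{q,\gamma}(g_j;z_0) < \infty$, the series $\sum_j g_j$ converges absolutely in $L^q(B)$ for every ball $B$. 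Because $L^q_{loc}(\mathbb{H}^n)$ is complete with respect to these seminorms, the series converges in $L^q_{loc}(\mathbb{H}^n)$ to some $g$. Let $G$ be the class of $g$ in $E^q_k$. The quotient map $L^q_{loc} \to E^q_k$ is continuous, since $\|\cdot\|_{q,B} \le |\cdot|_{q,B}$. Hence $\sum_j G_j \to G$ in $E^q_k$, which proves the convergence part of (i).

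\textbf{Step 2: the maximal estimate.} For every $r > 0$, the convergence in $L^q(B(z_0,r))$ and Minkowski's inequality give
\[
r^{-\gamma}|g|_{q,B(z_0,r)} \le \sum_j r^{-\gamma}|g_j|_{q,B(z_0,r)} \le \sum_j \eta_{q,\gamma}(g_j;z_0).
\]
Taking the supremum over $r$ yields $\eta_{q,\gamma}(g;z_0) \le \sum_j N_{q,\gamma}(G_j;z_0) < \infty$. Then $N_{q,\gamma}(G;z_0) \le \eta_{q,\gamma}(g;z_0)$ by definition, which proves the inequality in (i).

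\textbf{Step 3: uniqueness, part (ii).} Since $\eta_{q,\gamma}(g;z_0) < \infty$ and $g \in G$, Lemma \ref{puntual 1}(i) shows that $g$ is the unique representative of $G$ with finite $\eta_{q,\gamma}(\cdot\,;z_0)$. Therefore $\eta_{q,\gamma}(g;z_0) = N_{q,\gamma}(G;z_0)$, which is (ii).

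There is no real obstacle here. The only point needing care is checking that convergence in $L^q_{loc}$ is compatible with the quotient topology of $E^q_k$ and with the supremum defining $\eta$, and both follow from Minkowski's inequality on each fixed ball.
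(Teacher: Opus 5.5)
Your proof is correct and is essentially the argument the paper defers to (Gatto--Jim\'enez--Segovia, Lemma 4). You pick the distinguished representatives $g_j$ via Lemma \ref{puntual 1}, get absolute convergence of $\sum_j g_j$ in $L^q_{loc}$ from $|g_j|_{q,B}\le c\,\eta_{q,\gamma}(g_j;z_0)$, pass the bound $\eta_{q,\gamma}(g;z_0)\le\sum_j \eta_{q,\gamma}(g_j;z_0)$ to the limit by Minkowski, and conclude (ii) from the uniqueness in Lemma \ref{puntual 1}(i).
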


\begin{proof} The proof is similar to the one given in \cite[Lemma 4]{segovia}.
\end{proof}

\begin{proposition} (\cite[Proposition 4.7]{rocha3}) \label{g distrib}
If $g \in L^{q}_{loc}(\mathbb{H}^{n})$, $1 < q < \infty$, and there is a point $z_0 \in \mathbb{H}^{n}$ such that 
$\eta_{q, \, \gamma} (g ; z_0) < \infty$, then $g \in \mathcal{S}'(\mathbb{H}^{n})$.
\end{proposition}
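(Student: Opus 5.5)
The plan is to show that $g$ defines a continuous linear functional on $\mathcal{S}(\mathbb{H}^n)$ through $\phi \mapsto \int g\,\phi$. To do this we bound $\int |g(w)|(1+\rho(w))^{-M}\,dw$ for some finite $M$, using the hypothesis $\eta_{q,\gamma}(g;z_0)<\infty$ to control the growth of the $L^q$ averages of $g$ on large balls centered at $z_0$.

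First, decompose $\mathbb{H}^n$ into the ball $B(z_0,1)$ and the dyadic shells $A_j=B(z_0,2^{j})\setminus B(z_0,2^{j-1})$ for $j\ge 1$. By the definition of $\eta_{q,\gamma}$, for every $r>0$ we have
\[
|g|_{q,B(z_0,r)} \le r^{\gamma}\,\eta_{q,\gamma}(g;z_0).
\]
Hölder's inequality then gives
\[
\int_{B(z_0,r)}|g| \le |B(z_0,r)|\,|g|_{q,B(z_0,r)} \lesssim r^{Q+\gamma}\,\eta_{q,\gamma}(g;z_0).
\]
In particular $g\in L^1_{loc}$, which already follows from $g\in L^q_{loc}$.

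Second, compare $\rho(w)$ with $\rho(z_0^{-1}\cdot w)$. By the triangle inequality for the Koranyi norm, in its group form $\rho(z_0^{-1}\cdot w)\le \rho(z_0)+\rho(w)$ (this is the inequality of \cite[(1.8)]{Foll-St} with $\gamma=1$), we get $1+\rho(z_0^{-1}\cdot w)\le (1+\rho(z_0))(1+\rho(w))$. Hence $(1+\rho(w))^{-M}\le C_{z_0}(1+\rho(z_0^{-1}\cdot w))^{-M}$. On the shell $A_j$ this is at most $C_{z_0}2^{-(j-1)M}$. Summing over shells,
\[
\int |g(w)|(1+\rho(w))^{-M}\,dw \lesssim C_{z_0}\,\eta_{q,\gamma}(g;z_0)\Big(1+\sum_{j\ge1}2^{j(Q+\gamma)}2^{-(j-1)M}\Big) < \infty
\]
as soon as $M>Q+\gamma$.

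Third, for $\phi\in\mathcal{S}(\mathbb{H}^n)$ and an integer $N\ge M$, we have $|\phi(w)|\le (1+\rho(w))^{-N}\|\phi\|_{\mathcal{S}(\mathbb{H}^n),N}$, since the term with $I=0$ appears in the seminorm. Therefore
\[
\Big|\int g\phi\Big|\le C\,\|\phi\|_{\mathcal{S}(\mathbb{H}^n),N},
\]
which shows that the functional is well defined, linear and continuous, so $g\in\mathcal{S}'(\mathbb{H}^n)$.

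The only mildly delicate point is the comparison between $\rho(w)$ and $\rho(z_0^{-1}\cdot w)$, because the balls are left-translates. It is handled by the quasi-triangle inequality of $\rho$ for the group law, which is true with constant $1$ for the Koranyi norm; any constant would do. Everything else is routine.
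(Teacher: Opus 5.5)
Your proof is correct and is essentially the standard argument behind this result (the paper gives no proof of its own and simply cites Proposition~4.7 of \cite{rocha3}, which follows the Gatto--Jim\'enez--Segovia scheme). That argument uses the growth bound $|g|_{q,B(z_0,r)}\le r^{\gamma}\eta_{q,\gamma}(g;z_0)$ together with H\"older on dyadic annuli around $z_0$ to get $\int|g|(1+\rho)^{-M}<\infty$ for $M>Q+\gamma$, and then the $I=0$ term of a Schwartz seminorm; your comparison $1+\rho(z_0^{-1}\cdot w)\le(1+\rho(z_0))(1+\rho(w))$ handles the left-translated balls correctly.
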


\begin{proposition} \label{Lg dist} Let $g \in L^q_{loc} \cap \mathcal{S}'(\mathbb{H}^n)$ and 
$f = \mathcal{L} g$ in $\mathcal{S}'(\mathbb{H}^n)$. If $\phi \in \mathcal{S}(\mathbb{H}^n)$ and $N > Q+2$, then
\[
(M_{\phi}^{dis}f)(z) := \sup \left\{ |(f \ast \phi_{2^{j}})(z)| : j \in \mathbb{Z} \right\}  
\]
\[
\leq C \| \phi \|_{\mathcal{S}(\mathbb{H}^{n}), N}  \,\,\, \eta_{q, 2}(g; \, z)
\]
holds for all $z \in \mathbb{H}^n$.
\end{proposition}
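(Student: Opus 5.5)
The plan is to move the sub-Laplacian from $f$ onto the test function and then use that a $\mathcal{L}$-image has vanishing moments of order $\le 1$ in the $t$-variable scaling sense. Fix $z\in\mathbb{H}^n$ with $\eta_{q,2}(g;z)<\infty$; otherwise there is nothing to prove. Since $f=\mathcal{L}g$ in $\mathcal{S}'$ and $\mathcal{L}$ commutes with left convolution, we write
\[
(f\ast\phi_{s})(z)=(g\ast \mathcal{L}(\phi_{s}))(z)=s^{-2}\int g(w)\,(\mathcal{L}\phi)_{s}(w^{-1}\cdot z)\,dw ,\qquad s=2^{j},
\]
using the homogeneity of degree $2$ of $\mathcal{L}$. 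Here we use that $\mathcal{L}$ is formally self-adjoint, so the convolution identity is legitimate in $\mathcal{S}'$. The key observation is that $\psi:=\mathcal{L}\phi$ annihilates every polynomial of homogeneous degree at most $1$. Indeed, $\int \psi(w^{-1}\cdot z)\,P(w)\,dw$ equals, up to reflection, the pairing of $\phi$ with $\mathcal{L}$ applied to a polynomial of degree $\le1$, and that vanishes. Concretely, the right-invariant counterparts of the $X_i$ kill constants and the $x_i$.

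Hence we may subtract from $g$ its first-order Taylor-type polynomial at $z$, namely $P_z(w)=g_B$, or more safely $0$, since $k=1$ and $\gamma=2=k+t$ with $t=1$. Actually the simplest route is the following. Since $\eta_{q,2}(g;z)<\infty$ forces $|g|_{q,B(z,r)}\le r^{2}\eta_{q,2}(g;z)$ for all $r$, no polynomial subtraction is needed: we bound the integral directly. We split $\mathbb{H}^n$ into the ball $B(z,s)$ and the dyadic annuli $A_l=B(z,2^{l}s)\setminus B(z,2^{l-1}s)$ for $l\ge1$. On $A_l$ we use $|\psi_s(w^{-1}\cdot z)|\le C\|\phi\|_{\mathcal{S},N}\,s^{-Q}(1+2^{l})^{-N}$. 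Since $\psi=\mathcal{L}\phi$ involves $X^I\phi$ with $d(I)=2$, the Schwartz seminorm of order $N$ controls it, and the left/right translation issue is handled by $\rho(w^{-1}z)=\rho(z^{-1}w)$. Then Hölder gives
\[
\int_{A_l}|g(w)||\psi_s(w^{-1}\cdot z)|\,dw\le C\|\phi\|_{\mathcal{S},N}\,s^{-Q}2^{-lN}|B(z,2^{l}s)|\,|g|_{q,B(z,2^ls)}\le C\|\phi\|_{\mathcal{S},N}2^{l(Q+2-N)}s^{2}\eta_{q,2}(g;z).
\]
Multiplying by $s^{-2}$ and summing over $l\ge0$ converges exactly because $N>Q+2$. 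This yields the bound $C\|\phi\|_{\mathcal{S},N}\,\eta_{q,2}(g;z)$ uniformly in $j$, and taking the supremum over $j$ finishes the proof.

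The main obstacle is justifying $(f\ast\phi_s)(z)=(g\ast\mathcal{L}\phi_s)(z)$ as an absolutely convergent integral, given that $g$ is only in $L^q_{loc}\cap\mathcal{S}'$. For this we would use Proposition \ref{g distrib} together with the growth bound $|g|_{q,B(z,r)}\lesssim r^2$ for large $r$, which makes the pairing of $g$ with the rapidly decaying $\mathcal{L}\phi_s(\cdot^{-1}z)$ agree with the integral. A second, smaller point is to check the seminorm bound on $\mathcal{L}\phi$ with decay order $N$, which requires $\|\cdot\|_{\mathcal{S},N}$ to include derivatives of homogeneous degree $2$. That holds since $N>Q+2\ge2$.
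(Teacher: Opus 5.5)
Your first identity, $(f\ast\phi_{s})(z)=(g\ast\mathcal{L}(\phi_{s}))(z)$, is wrong on $\mathbb{H}^n$.

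\textbf{The gap.} Left invariance gives $\mathcal{L}(g\ast\phi_s)=g\ast\mathcal{L}\phi_s$, but on a non-commutative group this is not $(\mathcal{L}g)\ast\phi_s$. Transposing $\mathcal{L}$ (it is indeed formally self-adjoint) onto $w\mapsto\phi_s(w^{-1}\cdot z)$ turns the left-invariant fields into the right-invariant ones $\widetilde X_i=X_i-4x_{i+n}X_{2n+1}$ and $\widetilde X_{i+n}=X_{i+n}+4x_iX_{2n+1}$. The correct formula is therefore $(\mathcal{L}g)\ast\phi_s=g\ast\widetilde{\mathcal{L}}\phi_s$, with
\[
\widetilde{\mathcal{L}}=\mathcal{L}+8\,\Theta\,X_{2n+1},\qquad \Theta=\sum_{i=1}^{n}(x_{i+n}\partial_{x_i}-x_i\partial_{x_{i+n}})=\sum_{i=1}^{n}(x_{i+n}X_i-x_iX_{i+n})-2|x|^2X_{2n+1}.
\]
Your remark $\rho(w^{-1}\cdot z)=\rho(z^{-1}\cdot w)$ handles the balls, not this. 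Because of the coefficient $|x|^2$ in front of $X_{2n+1}^2$, the seminorm $\|\phi\|_{\mathcal{S}(\mathbb{H}^n),N}$ only gives $|\widetilde{\mathcal{L}}\phi(u)|\lesssim(1+\rho(u))^{2-N}\|\phi\|_{\mathcal{S}(\mathbb{H}^n),N}$. Your annular sum then becomes $\sum_l 2^{l(Q+4-N)}$, which needs $N>Q+4$.

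\textbf{The loss is genuine.} Take $\phi_K$ to be a sum over $1\le k\le K$ of about $2^{kQ}$ disjoint left translates of a fixed bump, each of height $2^{-kN}$, centred at points with $|x|\approx\rho\approx 2^k$. Let $g$ equal $2^{2k}$ times the sign of $\widetilde{\mathcal{L}}\phi_K(u^{-1})$ on the inverted supports, and $0$ elsewhere. Then $\|\phi_K\|_{\mathcal{S}(\mathbb{H}^n),N}\lesssim 1$ and $\eta_{q,2}(g;e)\lesssim 1$ uniformly in $K$. Meanwhile the term $-16|x|^2X_{2n+1}^2$ forces $|(\mathcal{L}g\ast\phi_K)(e)|\gtrsim\sum_{k\le K}2^{k(Q+4-N)}$. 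So for arbitrary $\phi\in\mathcal{S}(\mathbb{H}^n)$ and a constant independent of $\phi$, the inequality fails when $Q+2<N\le Q+4$. As written, your argument proves too much.

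\textbf{How to repair it.} Add a symmetry assumption on $\phi$. If $\phi$ depends only on $(|x|,t)$, in particular if it is radial as in Definition \ref{radial}, then $\Theta\phi=0$ and so $\widetilde{\mathcal{L}}\phi=\mathcal{L}\phi$. Your annular H\"older estimate then goes through verbatim with $N>Q+2$; the vanishing-moment detour is unnecessary and can be dropped. The rest of your argument (the bound $|g|_{q,B(z,r)}\le r^2\eta_{q,2}(g;z)$ and justifying the pairing as an absolutely convergent integral) is fine. This radial case is exactly how the paper uses the proposition: in Theorem \ref{principal thm}, $\phi$ is radial with $\int\phi=1$. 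If instead you let the constant depend on a fixed $\phi$, the full Schwartz decay of $\widetilde{\mathcal{L}}\phi$ also suffices.

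For comparison, the paper gives no direct argument. It notes $M_{\phi}^{dis}f\le M_{\phi}f$, with $M_\phi$ the nontangential maximal function, and quotes \cite[Proposition 4.8]{rocha3}. Your computation is a self-contained version of that estimate, once the transposed operator is identified correctly.
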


\begin{proof} 
Since $(M_{\phi}^{dis}f)(z) \leq (M_{\phi} f)(z) := \sup \left\{ |(f \ast \phi_t)(w)| : \rho(w^{-1} \cdot z) < t, \, 0 < t < \infty \right\}$ for all $z \in \mathbb{H}^n$, the proposition follows from \cite[Proposition 4.8]{rocha3}.
\end{proof}

\begin{proposition} \label{cerrado} If $\Phi$ is an Orlicz function  with $0 < i(\Phi) \leq I(\Phi) < \infty$, then the space 
$\mathcal{H}^{\Phi}_{q, \, \gamma}(\mathbb{H}^{n})$ is complete.
\end{proposition}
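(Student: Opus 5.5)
The plan is the standard one for quasi-normed spaces: show that every absolutely convergent series converges, in a suitable form adapted to the quasi-norm. Since $\Phi$ has positive lower type $p_{\Phi}^{-}$ and upper type $p_{\Phi}^{+}$, Proposition \ref{quasi-norm} makes $\|\cdot\|_{\Phi}$ a quasi-norm. Fix $0<\theta<\min\{1,i(\Phi)\}$. By Lemma \ref{fits}, $\|f\|_{\Phi}^{\theta}=\||f|^{\theta}\|_{\Phi_{1/\theta}}$, and $\Phi_{1/\theta}$ has lower type $p_{\Phi}^{-}/\theta>1$. After passing to an equivalent $N$-function (Remark \ref{N funct}), $\|\cdot\|_{\Phi_{1/\theta}}$ is equivalent to a genuine norm, so we obtain the $\theta$-triangle inequality
\[
\Big\| \sum_j |h_j| \Big\|_{\Phi}^{\theta} \leq C \sum_j \|h_j\|_{\Phi}^{\theta}.
\]
This uses $(\sum |h_j|)^{\theta}\leq \sum |h_j|^{\theta}$, monotonicity (iv) of Proposition \ref{quasi-norm}, and the Fatou property (v) for infinite sums. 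It therefore suffices to take a Cauchy sequence $\{G_j\}$ in $\mathcal{H}^{\Phi}_{q,\gamma}(\mathbb{H}^n)$ and pass to a subsequence with $\|G_{j+1}-G_j\|^{\theta}_{\mathcal{H}^{\Phi}_{q,\gamma}}\leq 2^{-j}$. Writing $H_j:=G_{j+1}-G_j$, we then have $\sum_j \|N_{q,\gamma}(H_j;\cdot)\|_{\Phi}^{\theta}<\infty$.

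Next set $S(z):=\sum_j N_{q,\gamma}(H_j;z)$. By the $\theta$-inequality, $\|S\|_{\Phi}<\infty$, so $S<\infty$ almost everywhere; in particular there is a point $z_0$ with $S(z_0)<\infty$. Lemma \ref{series in Eqk} applied at $z_0$ shows that $\sum_j H_j$ converges in $E^q_k$ to some element $H$. Put $G:=G_1+H$. Then $G_j\to G$ in $E^q_k$, since convergence of the partial sums in the quotient topology is exactly what that lemma provides.

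The key step, and the main obstacle, is the pointwise bound
\[
N_{q,\gamma}\Big(G-G_{m};z\Big)=N_{q,\gamma}\Big(\sum_{j\ge m}H_j;z\Big)\leq \sum_{j\geq m}N_{q,\gamma}(H_j;z)
\]
at every $z$ where the right-hand side is finite. The subtlety is that the limit in $E^q_k$ was constructed using the point $z_0$, while the bound is needed at every $z$. I would apply Lemma \ref{series in Eqk}(i) again at each such $z$ to the tail series. It gives a limit $G'_z$ in $E^q_k$ satisfying the inequality, and since $E^q_k$ is Hausdorff (its seminorms separate classes, because polynomials are the only functions with vanishing $L^q$ seminorms modulo $\mathcal{P}_k$), $G'_z$ must coincide with the limit already obtained. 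Hence $N_{q,\gamma}(G-G_m;\cdot)\leq \sum_{j\ge m}N_{q,\gamma}(H_j;\cdot)$ almost everywhere.

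With this bound, taking $\Phi$-quasi-norms and using the $\theta$-inequality gives
\[
\|G-G_m\|_{\mathcal{H}^{\Phi}_{q,\gamma}}^{\theta}\lesssim \sum_{j\ge m}2^{-j}\to 0 .
\]
The same estimate with $m=1$, together with $G_1\in\mathcal{H}^{\Phi}_{q,\gamma}$, shows $G\in\mathcal{H}^{\Phi}_{q,\gamma}(\mathbb{H}^n)$. Since a Cauchy sequence with a convergent subsequence converges in a quasi-normed space, the whole sequence $G_j$ converges to $G$, which proves completeness.
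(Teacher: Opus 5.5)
Your proof is correct and follows the same skeleton as the paper's: bound $\bigl\|\sum_j N_{q,\gamma}(H_j;\cdot)\bigr\|_{\Phi}$, deduce a.e.\ finiteness, apply Lemma~\ref{series in Eqk} to obtain the limit in $E^q_k$, and then use the pointwise tail bound to get convergence in $\mathcal{H}^{\Phi}_{q,\gamma}(\mathbb{H}^n)$. The differences are technical. The paper handles the quasi-norm by iterating the quasi-triangle inequality with weights $K^{j}$ plus Fatou's lemma, and then invokes Nekvinda's generalized Riesz--Fischer criterion. You instead use a $\theta$-triangle inequality, obtained by convexifying $\Phi_{1/\theta}$, together with a direct Cauchy-subsequence argument. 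Your Hausdorff-uniqueness argument also makes explicit why the tail bound holds at every $z$ and not only at $z_0$, a point the paper passes over silently.
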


\begin{proof} Without loss of generality, we may assume that $\Phi$ is continuous and strictly increasing (see Remark \ref{cont and crec}). 
By \cite[Theoremm 3.3]{Nekvinda}, it is enough to show that $\mathcal{H}^{\Phi}_{q, \, \gamma}$ has the generalized Riesz-Fisher property 
with constant $C_0 \in [1, \infty)$, i.e.: for any sequence $\{ G_j \}$ in $\mathcal{H}^{\Phi}_{q, \, \gamma}$ that satisfies 
\[
\sum_{j=1}^{\infty} C_{0}^{j+1} \| G_j \|_{\mathcal{H}^{\Phi}_{q, \, \gamma}} < \infty,
\]
the series $\sum_{j} G_j$ converges in $\mathcal{H}^{\Phi}_{q, \, \gamma}$.

Let $C_0 = K$, where $K$ is as in Proposition \ref{quasi-norm} - (iii), and let $m \geq 1$ be fixed, then 
\begin{eqnarray*}
\left\| \sum_{j=m}^{k} N_{q, \, \gamma}(G_{j}; \, \cdot \,) \right\|_{\Phi} &\leq& \sum_{j=m}^{k} C_{0}^{j-m+1} 
\left\| N_{q, \, \gamma}(G_{j}; \, \cdot \,) \right\|_{\Phi} \\
&\leq& \sum_{j=m}^{\infty} C_{0}^{j-m+1} \| G_j \|_{\mathcal{H}^{\Phi}_{q, \, \gamma}} =: \alpha_m < \infty,
\end{eqnarray*}
for every $k \geq m$. Being $\Phi$ an increasing continuous function, by Lemma \ref{fi menor uno}, we obtain 
\[
\int_{\mathbb{H}^{n}} \, \Phi\left( \alpha_m^{-1} \, \left|\sum_{j=m}^{k} N_{q, \, \gamma}(G_{j}; \, z ) \right| \right) \, dz
\] 
\[
\leq \int_{\mathbb{H}^{n}} \Phi \left( \left\| \sum_{j=m}^{k} N_{q, \, \gamma}(G_{j}; \, \cdot \,) \right\|_{\Phi} \, 
\left| \sum_{j=m}^{k} N_{q, \, \gamma}(G_{j};  z ) \right| \right) \, dz \leq 1, \,\,\, \forall \, k \geq m,
\]
by applying Fatou's lemma as $k \rightarrow \infty$ and the continuity of $\Phi$, we obtain
\[
\int_{\mathbb{H}^{n}} \, \Phi \left( \alpha_m^{-1} \, \left| \sum_{j=m}^{\infty} N_{q, \, \gamma}(G_{j}; \, z ) \right| \right) \, dz \leq 1,
\]
so
\begin{equation}
\left\|  \sum_{j=m}^{\infty} N_{q, \, \gamma}(G_{j}; \, \cdot \,) \right\|_{\Phi} \leq \alpha_m = \sum_{j=m}^{\infty} C_{0}^{j-m+1} 
\| G_j \|_{\mathcal{H}^{\Phi}_{q, \, \gamma}} < \infty, \,\,\,\, \forall \, m \geq 1  \label{serie}.
\end{equation}
Taking $m = 1$ in (\ref{serie}), it follows that $\sum_{j} N_{q, \, \gamma}(G_{j}; z)$ is finite a.e. 
$z \in \mathbb{H}^{n}$. Then, by $(i)$ of Lemma \ref{series in Eqk}, the series $\sum_j G_j$ converges in $E_{k}^{q}$ to an element $G$. Now 
\[
N_{q, \, \gamma}\left( G - \sum_{j=1}^{k} G_j; \, z \right) \leq \sum_{j=k+1}^{\infty} N_{q, \, \gamma} (G_j; \, z),
\]
from this and (\ref{serie}) we get
\[
\left\| G - \sum_{j=1}^{k} G_j \right\|_{\mathcal{H}^{\Phi}_{q, \, \gamma}} \leq \sum_{j=k+1}^{\infty} C_{0}^{j-k} 
\| G_j \|_{\mathcal{H}^{\Phi}_{q, \, \gamma}},
\]
and since the right-hand side tends to $0$ as $k \rightarrow \infty$, the series $\sum_{j}G_j$ converges to $G$ in $\mathcal{H}^{\Phi}_{q, \, \gamma}(\mathbb{H}^{n})$.
\end{proof}

\begin{remark} \label{LG definition} We observe that if $G \in \mathcal{H}^{\Phi}_{q, \, 2}(\mathbb{H}^{n})$, then 
$N_{q, \, 2}(G; z_0) < \infty,$ for some $z_0 \in \mathbb{H}^{n}$. By $(i)$ in Lemma \ref{puntual 1} there exists $g \in G$ such that 
$N_{q, \, 2}(G; z_0) = \eta_{q, \, 2}(g; z_0)$; from Proposition \ref{g distrib} it follows that $g  \in \mathcal{S}'(\mathbb{H}^{n})$. So 
$\mathcal{L} g$ is well defined in sense of distributions. On the other hand, since any two representatives of $G$ differ in a polynomial 
of homogeneous degree at most $1$, we get that $\mathcal{L} g$ is independent of the representative $g \in G$ chosen. Therefore, for 
$G \in \mathcal{H}^{\Phi}_{q, \, 2}(\mathbb{H}^{n})$, we define $\mathcal{L} G$ as the distribution $\mathcal{L} g$, where $g$ is any representative of $G$.
\end{remark}

\begin{theorem} \label{sub-lap inject}
Let $\Phi$ be an Orlicz function of positive upper type $p_{\Phi}^{+}$. If $G \in \mathcal{H}^{\Phi}_{q, 2}(\mathbb{H}^{n})$ and 
$\mathcal{L} G = 0$, then $G \equiv 0$.
\end{theorem}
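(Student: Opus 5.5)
Fix a representative $g$ of $G$ and a point $z_0$ with $N_{q,2}(G;z_0)<\infty$; such a point exists because $N_{q,2}(G;\cdot)\in L^{\Phi}(\mathbb{H}^n)$ and $\Phi(t)>0$ for $t>0$. By Lemma \ref{puntual 1}(i) we may take $g$ to be the unique representative with $\eta_{q,2}(g;z_0)<\infty$, and by Proposition \ref{g distrib} $g\in\mathcal{S}'(\mathbb{H}^n)$. The hypothesis $\mathcal{L}g=0$ in $\mathcal{S}'$ says that $g$ is a tempered $\mathcal{L}$-harmonic distribution. Since $\mathcal{L}$ is hypoelliptic, $g$ is smooth. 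The Liouville theorem for the sub-Laplacian on $\mathbb{H}^n$ (via the group Fourier transform, or Geller's theorem: tempered solutions of $\mathcal{L}u=0$ are polynomials) then shows that $g$ is a polynomial $P$. The plan has two steps: first show $\deg P\le 1$ (homogeneous degree), so that $G=0$ in $E^q_1$; then observe that this is exactly $G\equiv 0$.

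To bound the degree, use $\eta_{q,2}(P;z_0)<\infty$:
\[
\Big(|B(z_0,r)|^{-1}\int_{B(z_0,r)}|P|^q\Big)^{1/q}\le C r^{2}
\]
for all $r>0$. Translate to $z_0=e$ by writing $\tilde P(u)=P(z_0\cdot u)$, which is again a polynomial of the same homogeneous degree. Split $\tilde P=\sum_{j\le d}\tilde P_j$ into homogeneous parts of degree $j$. Using the dilation identity (\ref{homog dim}),
\[
|\tilde P|_{q,B(e,r)}=\Big(|B(e,1)|^{-1}\int_{B(e,1)}\Big|\sum_j r^{j}\tilde P_j(u)\Big|^q du\Big)^{1/q}.
\]
Since all norms on the finite-dimensional space of polynomials of degree $\le d$ are equivalent, this quantity is comparable to $\sum_j r^j\|\tilde P_j\|$. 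Letting $r\to\infty$ forces $\tilde P_j=0$ for $j>2$. For $j=2$ we need more. The argument must use the Orlicz integrability and not only finiteness at one point, because degree exactly $2$ is compatible with $\eta_{q,2}<\infty$.

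To rule out degree $2$, note that if $P$ has a nonzero degree-2 part $P_2$, then for every $z$
\[
N_{q,2}(G;z)=\eta_{q,2}(P;z)\ge \limsup_{r\to\infty} r^{-2}|P|_{q,B(z,r)}\ge c>0,
\]
with $c$ depending only on $P_2$. The degree-2 part of $P(z\cdot u)$ in $u$ equals $P_2(u)$ plus nothing of top order, since translation only lowers degree in the non-top terms. Also, $\eta_{q,2}(P;z)$ must be finite at almost every $z$ (membership in $L^{\Phi}$), and by uniqueness $P$ is the representative realizing $N_{q,2}(G;z)$ there. Hence $N_{q,2}(G;\cdot)\ge c$ on a set of infinite measure. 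Then $\int\Phi(N/\lambda)=\infty$ for every $\lambda$, since $\Phi(c/\lambda)>0$, which contradicts $N_{q,2}(G;\cdot)\in L^{\Phi}$. Here Lemma \ref{modular 1} and the upper type are used to pass between the norm and the modular. Therefore $P\in\mathcal{P}_1$, the class $G$ is zero in $E^q_1$, and $G\equiv0$.

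The main obstacle is the Liouville step, namely that tempered $\mathcal{L}$-harmonic distributions are polynomials. I would cite it from the literature (Geller, or Folland–Stein). Alternatively, one can argue with Lemma \ref{ident pol}-type mollification: using the homogeneous estimate from $\eta_{q,2}(g;z)<\infty$ at many points together with the mean-value/Harnack estimates for $\mathcal{L}$-harmonic functions, one shows that $X^Ig$ is bounded by $r^{2-d(I)}$ on balls of radius $r$, so all derivatives of order $d(I)\ge3$ vanish.
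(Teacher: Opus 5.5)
Your argument is valid in outline and takes a genuinely different route from the paper. One step is justified incorrectly, though the repair is one line.

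\textbf{Comparison with the paper.} The paper's proof is a reduction. It sets $\mathcal{O}=\{z : N_{q,2}(G;z)>1\}$, which is open by Lemma \ref{semicont} and has finite measure. It then uses the positive upper type (Lemmas \ref{lower estim}(ii) and \ref{modular 1}) to get $N_{q,2}(G;\cdot)\in L^{p_{\Phi}^{+}}(\mathbb{H}^n\setminus\mathcal{O})$. Finally it reruns the injectivity argument of the author's earlier $H^p(\mathbb{H}^n)$ paper (its Theorem 4.10 and Lemma 2.5, with $h=N_{q,2}(G;\cdot)$). So the Orlicz case is pushed back onto $L^p$ machinery already in place, and that is where the upper type enters.

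You instead make $g$ a polynomial via Geller's Liouville theorem. This is legitimate, since $\mathcal{L}$ is homogeneous and hypoelliptic with $\mathcal{L}^{t}=\mathcal{L}$. You then cut the homogeneous degree to at most $2$ by dilation, and exclude degree $2$ using translation invariance of the top homogeneous part. This is self-contained modulo Geller's theorem, and slightly more general. You only need $\Phi(t)>0$ for $t>0$ and $|\mathbb{H}^n|=\infty$, because $\int\Phi(N_{q,2}(G;z)/\lambda)\,dz\ge\Phi(c/\lambda)\,|\mathbb{H}^n|=\infty$ for every $\lambda>0$, directly from the Luxemburg norm. Neither Lemma \ref{modular 1} nor the upper type is actually needed.

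\textbf{The faulty step.} Two of your claims are false: the equality $N_{q,2}(G;z)=\eta_{q,2}(P;z)$, and the assertion that $\eta_{q,2}(P;z)<\infty$ for a.e.\ $z$, so that $P$ realizes $N_{q,2}(G;z)$ there.
\begin{itemize}
\item The representative realizing $N_{q,2}(G;z)$ depends on $z$; the paper notes this in the proof of Theorem \ref{principal thm}.
\item The reason is that finiteness of $\eta_{q,2}(h;z)$ forces the parts of homogeneous degree $0$ and $1$ of $u\mapsto h(z\cdot u)$ to vanish.
\item For example, take $z_0=e$ and $P(x,t)=t$. Then $P(z\cdot u)=t_z+\langle x_z,Jx_u\rangle+t_u$, so $\eta_{q,2}(P;z)=\infty$ for every $z\neq e$.
\end{itemize}

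\textbf{The repair} uses only what you already observed. Every $h\in G$ has the form $P-R$ with $R\in\mathcal{P}_1$, so $u\mapsto h(z\cdot u)$ still has degree-$2$ part $P_2$. Your norm-equivalence bound then gives $r^{-2}|h|_{q,B(z,r)}\ge c\,\|P_2\|$ for all $r>0$, all $z$, and all $h\in G$. Taking the infimum over $h$ yields $N_{q,2}(G;z)\ge c>0$ for every $z$. The contradiction with $N_{q,2}(G;\cdot)\in L^{\Phi}(\mathbb{H}^n)$ then follows as you wrote.
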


\begin{proof}
Given $G \in \mathcal{H}^{\Phi}_{q, 2}(\mathbb{H}^{n})$, let $\mathcal{O} := \{ z \in \mathbb{H}^n : N_{q, 2}(G; z) > 1  \}$. By Lemma 
\ref{semicont}, the set $\mathcal{O}$ is open with $| \mathcal{O} | < \infty$, and by Lemmas \ref{lower estim} and \ref{modular 1}, we obtain
\begin{eqnarray*}
\int_{\mathbb{H}^n \setminus \mathcal{O}} N_{q, 2}(G; z)^{p_{\Phi}^{+}} dz &\lesssim& \int_{\mathbb{H}^n \setminus \mathcal{O}} 
\Phi \left( N_{q, 2}(G; z) \right) dz \\ &\leq& \int_{\mathbb{H}^n} \Phi \left( N_{q, 2}(G; z) \right) dz < \infty.
\end{eqnarray*}
Then, following the proof of \cite[Theorem 4.10]{rocha3} and applying \cite[Lemma 2.5]{rocha3} with 
$h(\cdot) = N_{q, 2}(G; \cdot) \in L^{p_{\Phi}^{+}}(\mathbb{H}^n \setminus \mathcal{O})$ together with Remark \ref{LG definition}, the 
theorem follows.
\end{proof}

If $a$ is a bounded function with compact support, its potential $b$, defined as 
\begin{equation} \label{potencial b}
b(z) := \left( a \ast c_n \, \rho^{-2n} \right)(z) = c_n \int_{\mathbb{H}^{n}} \rho(w^{-1} \cdot z)^{-2n} a(w) dw,
\end{equation}
is a locally bounded function and, by Theorem \ref{fund solution}, $\mathcal{L} b = a$ in the sense of distributions.

\

In the sequel, $Q = 2n+2$ and $\beta$ is the constant in \cite[Corollary 1.44]{Foll-St}, we observe that $\beta \geq 1$ 
(see \cite[p. 29]{Foll-St}). For the potentials (\ref{potencial b}), we have the following result.

\begin{lemma} \label{a conv ro} Let $a(\cdot)$ be an $(\Phi, p_{0}, m)$ - atom centered at the $\rho$ - ball $B(z_0, \delta)$. If 
\[
b(z) = \left( a \ast c_n \, \rho^{-2n} \right)(z),
\]
then, for $\rho(z_0^{-1} z) \geq 2 \beta^{2}\delta$ and every multi-index $I$ there exists a positive constant $C_{I}$ such that 
\[
\left| (X^{I}b)(z) \right| \leq C_{I} \, \delta^{2+Q} \| \chi_B \|^{-1}_{\Phi} \rho(z_{0}^{-1} \cdot z)^{-Q-d(I)}
\]
holds.
\end{lemma}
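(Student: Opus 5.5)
We estimate $X^I b$ by subtracting the Taylor polynomial of the kernel and using the vanishing moments of $a$. Because $X^I$ commutes with left convolution,
\[
(X^{I}b)(z) = c_n \int_{B(z_0,\delta)} a(w)\, (X^{I}\rho^{-2n})(w^{-1}\cdot z)\, dw .
\]
Write $K_I := X^I(\rho^{-2n})$. This function is smooth on $\mathbb{H}^n\setminus\{e\}$ and homogeneous of degree $-2n-d(I) = 2-Q-d(I)$. We translate to the centre, setting $w = z_0\cdot u$ with $\rho(u)<\delta$ (Remark \ref{cambio de centro}), and put $y := z_0^{-1}\cdot z$. Then $w^{-1}\cdot z = u^{-1}\cdot y$, so the integrand becomes $a(z_0\cdot u)\,K_I(u^{-1}\cdot y)$.

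The function $u\mapsto a(z_0\cdot u)$ still has vanishing moments up to homogeneous degree $m$. The reason is that left translation maps $\mathcal{P}_m$ to itself: a polynomial of homogeneous degree $\le m$ composed with a group translation is again such a polynomial. Hence we may subtract from $u\mapsto K_I(u^{-1}\cdot y)$ its (left) Taylor polynomial $P_y$ at $u=e$ of homogeneous degree $\le m$, obtaining
\[
(X^{I}b)(z) = c_n\int_{\rho(u)<\delta} a(z_0\cdot u)\,\big[K_I(u^{-1}\cdot y) - P_y(u)\big]\,du .
\]

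The key step is the stratified Taylor inequality \cite[Corollary 1.44]{Foll-St}. This is where the constant $\beta$ and the hypothesis $\rho(y)\ge 2\beta^2\delta$ enter. It gives
\[
|K_I(u^{-1}\cdot y)-P_y(u)| \lesssim \sum_{\substack{d(J)>m,\\ |J|\le \lfloor m\rfloor+1}} \rho(u)^{d(J)}\sup_{\rho(v)\le \beta^{\lfloor m\rfloor+1}\rho(u)} |X^J K_I(v^{-1}\cdot y)|
\]
(up to the correct left/right form of the vector fields). Since $\rho(u)<\delta\le \rho(y)/(2\beta^2)$, the points $v^{-1}\cdot y$ satisfy $\rho(v^{-1}\cdot y)\approx\rho(y)$. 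By homogeneity, $|X^JK_I(v^{-1}\cdot y)|\lesssim \rho(y)^{2-Q-d(I)-d(J)}$, so the remainder is bounded by $\sum_J \delta^{d(J)}\rho(y)^{2-Q-d(I)-d(J)}$. The worst term has the smallest admissible $d(J)\ge m+1$, and since $\delta/\rho(y)\le 1$ every term is controlled by $\delta^{m+1}\rho(y)^{2-Q-d(I)-(m+1)}$.

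It remains to integrate. By Hölder and condition $a_2)$,
\[
\|a\|_{L^1} \le |B|^{1-1/p_0}\|a\|_{p_0} \le c\,\delta^{Q}\|\chi_B\|_\Phi^{-1}.
\]
Therefore
\[
|(X^Ib)(z)|\lesssim \delta^{Q}\|\chi_B\|_\Phi^{-1}\,\delta^{m+1}\rho(y)^{1-Q-m-d(I)} = \delta^{Q+2}\|\chi_B\|_\Phi^{-1}\rho(y)^{-Q-d(I)}\,(\delta/\rho(y))^{m-1}.
\]
Since $m\ge 1$ and $\delta/\rho(y)\le 1$, the factor $(\delta/\rho(y))^{m-1}$ is at most $1$, which yields the stated bound. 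If the Taylor step only produces order $d(J)\ge 1$ (the simplest case), the same computation gives exactly $\delta^{Q+2}\rho(y)^{-Q-d(I)}$ once the vanishing moments of order $\le 1$ are used.

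The main obstacle is carrying out the Taylor step correctly on $\mathbb{H}^n$. Two points need care: matching left versus right invariance (the kernel is evaluated at $u^{-1}\cdot y$, so the expansion in $u$ is naturally in right-invariant fields), and checking that translated moments still vanish. To handle the first, I would apply \cite[Corollary 1.44]{Foll-St} to the function $f(u):=K_I(u^{-1}\cdot y)$, using whichever version (left or right) matches the translation structure. I would also note that right-invariant fields have the same homogeneity, so the homogeneity estimate $|Y^J K_I(x)|\lesssim \rho(x)^{2-Q-d(I)-d(J)}$ holds equally for them.
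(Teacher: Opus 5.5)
Your ingredients are the right ones, and they are the ones behind the paper's proof (which defers to \cite[Lemma 4.11]{rocha3}): cancellation of the atom against a Taylor polynomial of $u\mapsto K_I(u^{-1}\cdot y)$, the stratified Taylor inequality, homogeneity of the derivatives of $\rho^{-2n}$, and $\|a\|_{L^1}\lesssim\delta^{Q}\|\chi_B\|_\Phi^{-1}$ from $a_2)$. The step that fails is \emph{expanding to homogeneous degree $m$}. In the order-$m$ stratified Taylor inequality the supremum runs over $\rho(v)\le\beta^{m+1}\rho(u)$, so $\rho(v)$ can be as large as $\beta^{m+1}\delta$. The hypothesis only gives $\rho(y)\ge2\beta^{2}\delta$, so your claim that $\rho(v^{-1}\cdot y)\approx\rho(y)$ on that region does not follow. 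You cannot rule out $\beta^{m-1}>2$, since $\beta\ge1$ is a fixed structural constant and $m$ is any integer $\ge\max\{1,m_\Phi\}$ (e.g.\ $m_\Phi\ge Q$ when $i(\Phi)\le1$). In that case, for $\rho(y)$ near $2\beta^{2}\delta$ and $\rho(u)$ near $\delta$, the region contains $v=y$, where $X^{J}K_I(v^{-1}\cdot y)$ is singular, and the remainder bound is vacuous. The factor $\beta^{2}$ in the hypothesis is exactly the one attached to the expansion of homogeneous degree $1$, not $m$.

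The repair is to use only the moments of homogeneous degree $\le1$ (available since $m\ge1$): subtract the Taylor polynomial of $f(u)=K_I(u^{-1}\cdot y)$ at $e$ of homogeneous degree $1$. The stratified inequality \cite[Corollary 1.44]{Foll-St} then gives $|f(u)-P_y(u)|\lesssim\rho(u)^{2}\sup\{|X^{J}f(v)|:\rho(v)\le\beta^{2}\rho(u),\ |J|=2\}$, involving only $X_1,\dots,X_{2n}$. So the remainder has order exactly $2$; your closing remark about ``order $d(J)\ge1$'' should say this. Now $\rho(v)\le\beta^{2}\delta\le\rho(y)/2$ gives $\rho(v^{-1}\cdot y)\ge\rho(y)-\rho(v)\ge\rho(y)/2$. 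Since $X^{J}f(v)$ is, up to sign, a second-order right-invariant derivative of $K_I$ at $v^{-1}\cdot y$, homogeneity of degree $-Q-d(I)$ gives $|X^{J}f(v)|\lesssim\rho(y)^{-Q-d(I)}$. Integrating against $|a(z_0\cdot u)|$ yields $|(X^{I}b)(z)|\lesssim\delta^{Q}\|\chi_B\|_\Phi^{-1}\,\delta^{2}\rho(y)^{-Q-d(I)}$, which is the lemma, with no need for the extra factor $(\delta/\rho(y))^{m-1}$. Alternatively, you may keep the order-$m$ argument for $\rho(y)\ge2\beta^{m+1}\delta$ and handle $2\beta^{2}\delta\le\rho(y)<2\beta^{m+1}\delta$ by the crude bound $|K_I(u^{-1}\cdot y)|\lesssim\rho(y)^{2-Q-d(I)}$. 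This bound is valid because $\rho(u^{-1}\cdot y)\ge\rho(y)/2$, and you combine it with $\rho(y)^{2}\lesssim\beta^{2m+2}\delta^{2}$ in that range. As written, however, that intermediate range is not covered.
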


\begin{proof} 
The proof is similar to the one given in \cite[Lemma 4.11]{rocha3}, but considering now Definition \ref{atom def}. 
\end{proof}

The following result is crucial to get our main result.

\begin{proposition} \label{pointwise estimate} Let $a(\cdot)$ be an $(\Phi, p_{0}, m)$ - atom centered at the $\rho$ - ball 
$B=B(z_0, \delta)$. If $b(z) = (a \ast c_n \rho^{-2n})(z)$, then for all $z \in \mathbb{H}^{n}$
\begin{eqnarray} 
\label{N estimate}
N_{q, 2} \left(\widetilde{b}; \, z \right) &\lesssim& \| \chi_B \|^{-1}_{\Phi} \left[(M \chi_{B})(z) \right]^{\frac{2 + Q/q}{Q}} + 
\chi_{4 \beta^2 B}(z) (M a)(z)  \\
\notag
&+& \chi_{4 \beta^2 B}(z) \sum_{d(I)=2} (T^{*}_{I} a)(z),
\end{eqnarray}
where $\widetilde{b}$ is the class of $b$ in $E^{q}_{1}$, $M$ is the Hardy-Littlewood maximal operator and $(T^{*}_{I} a) (z) = \sup_{\epsilon >0} \left|\int_{\rho(w^{-1} \cdot z) > \epsilon} \, (X^{I} \rho^{-2n})(w^{-1} \cdot z) a(w) \, dw \right|$.
\end{proposition}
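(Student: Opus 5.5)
We bound $N_{q,2}(\widetilde b; z) \le \eta_{q,2}(b - P_z; z)$, where $P_z$ is the right Taylor polynomial of homogeneous degree at most $1$. Off the ball the natural representative is $b$ minus its degree-one Taylor polynomial at $z$, and near the ball it is $b$ itself (or $b$ minus a fixed polynomial). We split into two regions, mirroring the proof of the corresponding estimate in \cite{rocha3}.

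\emph{Case 1: $z$ far from the ball, $\rho(z_0^{-1}\cdot z) \ge 4\beta^2\delta$.} Let $P_z$ be the left Taylor polynomial of $b$ at $z$ of homogeneous degree $\le 1$, i.e. built from $b(z)$ and $X_i b(z)$, $i\le 2n$. We estimate $r^{-2}|b-P_z|_{q,B(z,r)}$ by splitting $r$.

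For $r < \rho(z_0^{-1}\cdot z)/(2\beta)$ the ball $B(z,r)$ stays in the region where Lemma \ref{a conv ro} applies. The stratified Taylor inequality (\cite[Corollary 1.44]{Foll-St}) gives $|b(w)-P_z(w)| \lesssim \rho(z^{-1}w)^2 \sup_{d(I)=2}|X^I b|$ on a $\beta$-enlarged ball. With $d(I)=2$, Lemma \ref{a conv ro} bounds this supremum by $\delta^{2+Q}\|\chi_B\|_\Phi^{-1}\rho(z_0^{-1}z)^{-Q-2}$.

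For large $r$ we bound $|b|_{q,B(z,r)}$ and $|P_z|_{q,B(z,r)}$ separately. Lemma \ref{a conv ro} controls $b(z)$ and $X_ib(z)$, and the local $L^q$ size of $b$ near $B$ comes from Young/Hardy–Littlewood–Sobolev: $a \in L^{p_0}$ and $\rho^{-2n}$ is locally in weak $L^{Q/(Q-2)}$. Since $q<\frac{n+1}{n}$, we have $\rho^{-2n}\in L^q_{loc}$ and the $L^q$ average of $b$ over balls containing $B$ is controlled.

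Combining the ranges, every term is bounded by $\|\chi_B\|_\Phi^{-1}(\delta/\rho(z_0^{-1}z))^{Q+2}$ times harmless factors. The worst exponent comes from the range $r\sim\rho(z_0^{-1}z)$ when the $L^q$ average over a ball containing $B$ is taken, which gives $(\delta/\rho)^{2+Q/q}$. Since $(M\chi_B)(z)\approx(\delta/\rho(z_0^{-1}z))^Q$, this yields the first term of (\ref{N estimate}) with exponent $(2+Q/q)/Q$.

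\emph{Case 2: $z\in 4\beta^2 B$.} We take $P_z$ again to be the degree-one Taylor polynomial, which is legitimate since $b$ is $C^1$ when $a\in L^{p_0}$ with $p_0>Q$. If $p_0$ is smaller, we instead use the representative $b$ minus a polynomial built from averages. For $r\lesssim\delta$ we use the pointwise second-order control $|b(w)-P_z(w)|\lesssim \rho(z^{-1}w)^2(\,Ma(z)+\sum_{d(I)=2}T^*_I a(z))$ on $B(z,r)$. This follows by writing $X^I b = \mathrm{p.v.}\,a\ast X^I\rho^{-2n} + c\,a$ for $d(I)=2$, so that $X^Ib$ is controlled by $T_I^*a + |a|$ and hence, after averaging, by $T^*_Ia$ and $Ma$. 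For $r\gtrsim\delta$ we argue as in Case 1, with $\rho(z_0^{-1}z)$ replaced by $\delta$, which reduces to the $\|\chi_B\|_\Phi^{-1}$ term since $M\chi_B\approx1$ there.

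The main obstacle is the small-$r$ estimate in Case 2: getting a genuinely pointwise second-order Taylor bound in terms of the maximal singular integrals $T_I^*a$ at the fixed point $z$, rather than in terms of $X^Ib$ on the whole ball. I would first follow \cite[Proposition 4.12]{rocha3} verbatim, noting that the atom size condition only enters through the normalization $\|\chi_B\|_\Phi^{-1}$ in place of $|B|^{-1/p}$.
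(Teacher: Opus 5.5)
Your outline follows the same route the paper relies on: the paper's proof transplants \cite[Proposition 4.12]{rocha3}, with $|B|^{-1/p}$ replaced by $\|\chi_B\|_\Phi^{-1}$, and uses Lemma \ref{a conv ro}. Your Case 1 is sound.

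\textbf{The gap.} The small-$r$ estimate in Case 2 does not work by the mechanism you propose. The stratified Taylor inequality bounds $|b(w)-P_z(w)|$ by $\rho(z^{-1}\cdot w)^2\sup\{|X^Ib(u)| : u\in B(z,\beta^2\rho(z^{-1}\cdot w)),\ d(I)=2\}$. But $X^Ib=\mathrm{p.v.}\,(a\ast X^I c_n\rho^{-2n})+c\,a$ is not locally bounded, and its values at points $u\neq z$ are not controlled by $Ma(z)+T_I^*a(z)$; averaging cannot repair a supremum. In fact the pointwise target $|b(w)-P_z(w)|\lesssim\rho(z^{-1}\cdot w)^2\,(Ma(z)+\sum_{d(I)=2}T_I^*a(z))$ is false. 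Suppose $a$ has a spike of height $A$ on a tiny ball $B(w,\epsilon)$ with $\rho(z^{-1}\cdot w)=r/2$. Then $b(w)-P_z(w)$ is of order $A\epsilon^{2}$, while the right-hand side is of order $A\epsilon^{Q}r^{2-Q}$. Only an $L^q$-averaged bound holds, and proving it is where $q<\frac{n+1}{n}$ is needed a second time.

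\textbf{The missing idea: Calder\'on's splitting at scale $r$.} Fix $z$, $r$ and $C\geq 2\beta^2$. Write $a=a\chi_{B(z,Cr)}+a\chi_{B(z,Cr)^c}=:a_1+a_2$ and $b_i=a_i\ast c_n\rho^{-2n}$. Use the single representative $b-P_z$, where $P_z$ is built from $b(z)$ and $(a\ast X_j c_n\rho^{-2n})(z)$, $j\leq 2n$. These integrals converge absolutely whenever $Ma(z)<\infty$, so no separate device for small $p_0$ is needed. Also $P_z$ does not depend on $r$, as $\eta_{q,2}$ requires. Then:
\begin{itemize}
\item Minkowski's inequality and $\|\rho^{-2n}\|_{L^q(B(e,R))}\approx R^{Q/q-2n}$ give $|b_1|_{q,B(z,r)}\lesssim r^2 Ma(z)$.
\item Dyadic annuli give $|b_1(z)|\lesssim r^2Ma(z)$ and $|(a_1\ast X_j\rho^{-2n})(z)|\lesssim r\,Ma(z)$, so the $a_1$-part of $P_z$ is $O(r^2Ma(z))$ on $B(z,r)$.
\item $b_2$ is smooth on $B(z,\beta^2 r)$, and its degree-one Taylor polynomial at $z$ is exactly the $a_2$-part of $P_z$. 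Here $X^Ib_2(z)$ is a truncated singular integral bounded by $T_I^*a(z)$. For $u\in B(z,\beta^2 r)$, the smoothness of $X^I\rho^{-2n}$ gives $|X^Ib_2(u)-X^Ib_2(z)|\lesssim\rho(z^{-1}\cdot u)\int_{\rho(v^{-1}\cdot z)>Cr}|a(v)|\rho(v^{-1}\cdot z)^{-Q-1}\,dv\lesssim Ma(z)$. So Taylor's inequality applied to $b_2$ costs only $r^2(Ma(z)+\sum_{d(I)=2}T_I^*a(z))$.
\end{itemize}
Together these give $r^{-2}|b-P_z|_{q,B(z,r)}\lesssim Ma(z)+\sum_{d(I)=2}T_I^*a(z)$ for every $r>0$.

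Consequently your separate treatment of $r\gtrsim\delta$ in Case 2 is unnecessary. As written it was also unjustified, since Lemma \ref{a conv ro} gives no control of $b(z)$ or $X_jb(z)$ when $z\in 4\beta^2B$.
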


\begin{proof} We point out that the argument used in the proof of \cite[Proposition 4.12]{rocha3}, to obtain the pointwise inequality
(4.9) therein, works in this setting as well, but considering now the conditions $a1)$, $a2)$ and $a3)$ given in Definition 
\ref{atom def} of $(\Phi, p_0, m)$ - atom. These conditions are similar to those of the atoms in classical context (see p. 71-72 
in \cite{Foll-St}). Then, this observation and Lemma \ref{a conv ro} allow us to get (\ref{N estimate}).
\end{proof}

\section{Main results} \label{main thm}

We are now in a position to prove our main results.

\begin{theorem} \label{principal thm} Let $Q=2n+2$, $1 < q < \frac{n+1}{n}$ and let $\Phi$ be an Orlicz function such that 
$Q \, (2 + \frac{Q}{q})^{-1} < i(\Phi) \leq I(\Phi) < \infty$. Then the Heisenberg 
sub-laplacian $\mathcal{L}$ is a bijective mapping from $\mathcal{H}^{\Phi}_{q, 2}(\mathbb{H}^{n})$ onto $H^{\Phi}(\mathbb{H}^{n})$. 
Moreover, there exist two positive constant $c_1$ and $c_2$ such that
\begin{equation} \label{doble ineq}
c_1 \|G \|_{\mathcal{H}^{\Phi}_{q, 2}(\mathbb{H}^{n})} \leq \| \mathcal{L}G \|_{H^{\Phi}(\mathbb{H}^{n})} \leq 
c_2 \|G \|_{\mathcal{H}^{\Phi}_{q, 2}(\mathbb{H}^{n})}
\end{equation}
hold for all $G \in \mathcal{H}^{\Phi}_{q, 2}(\mathbb{H}^{n})$.
\end{theorem}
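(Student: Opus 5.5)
The proof splits into four steps: the right-hand inequality in (\ref{doble ineq}), injectivity, surjectivity with the left-hand inequality, and a final consistency check on the representative.

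\textbf{Step 1: the upper bound.} Let $G \in \mathcal{H}^{\Phi}_{q,2}(\mathbb{H}^n)$ and let $g$ be a representative of $G$ with $\eta_{q,2}(g;z_0)<\infty$ at some point $z_0$ (Lemma \ref{puntual 1}). By Proposition \ref{g distrib}, $g\in\mathcal{S}'(\mathbb{H}^n)$, so $\mathcal{L}G=\mathcal{L}g$ is defined (Remark \ref{LG definition}).

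Fix a radial $\phi\in\mathcal{S}(\mathbb{H}^n)$ with $\int\phi\neq 0$. Proposition \ref{Lg dist} gives $M_\phi^{dis}(\mathcal{L}g)(z)\lesssim \eta_{q,2}(g;z)$ for every $z$. Since any two representatives differ by a polynomial of degree $\le 1$, which $\mathcal{L}$ annihilates, we may take the infimum over representatives. This yields $M^{dis}_\phi(\mathcal{L}G)\lesssim N_{q,2}(G;\cdot)$.

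Theorem \ref{norm comparable} then gives $\|\mathcal{L}G\|_{H^\Phi}\approx\|M^{dis}_\phi \mathcal{L}G\|_\Phi\lesssim \|G\|_{\mathcal{H}^\Phi_{q,2}}$. This is the right-hand inequality, and it also shows that $\mathcal{L}$ maps $\mathcal{H}^{\Phi}_{q,2}$ into $H^\Phi$.

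\textbf{Step 2: injectivity.} Injectivity is exactly Theorem \ref{sub-lap inject}.

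\textbf{Step 3: surjectivity and the lower bound.} Let $f\in H^\Phi(\mathbb{H}^n)$. Theorem \ref{infinite atomic decomp} gives $f=\sum_j\lambda_j a_j$ in $\mathcal{S}'$, with $(\Phi,\infty,m)$-atoms $a_j$ supported on balls $B_j$. By Remark \ref{atomo infinito}, each $a_j$ is also a $(\Phi,p_0,m)$-atom for any $p_0$ with $\max\{1,I(\Phi),q\}<p_0<\infty$. Set $b_j=a_j\ast c_n\rho^{-2n}$ and let $\widetilde b_j$ be its class in $E^q_1$.

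Proposition \ref{pointwise estimate} bounds $N_{q,2}(\widetilde b_j;\cdot)$ by three terms, which we sum with weights $\lambda_j$ and estimate in $L^\Phi$ separately.

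\emph{First term.} Put $s=(2+Q/q)/Q$. The hypothesis $i(\Phi)>Q(2+Q/q)^{-1}$ gives $s\,i(\Phi)>1$, so $\Phi_{1/s}$ has lower type $>1$ (Lemma \ref{lower upper fits}). Using Lemma \ref{fits}, we rewrite $\|\sum_j \lambda_j\|\chi_{B_j}\|_\Phi^{-1}(M\chi_{B_j})^{s}\|_\Phi$ as a $\Phi_{1/s}$-norm raised to the power $1/s$. Theorem \ref{Feff-Stein max ineq} with $r=s$ then bounds it by $\|\{\sum_j(\lambda_j\chi_{B_j}/\|\chi_{B_j}\|_\Phi)^{s}\}^{1/s}\|_\Phi$. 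If $s\ge1$, the embedding $\ell^{\theta}\subset\ell^{s}$ for $\theta\le 1$ bounds this further by the atomic quantity (\ref{norma atomica}) with $\theta<\min\{1,i(\Phi)\}$. If $s<1$, take $\theta=s$ directly.

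\emph{Second term.} The terms $\chi_{4\beta^2B_j}Ma_j$ are handled by Proposition \ref{max on atoms} with $r=4\beta^2$.

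\emph{Third term.} For $\chi_{4\beta^2B_j}T^*_Ia_j$ we repeat the proof of Proposition \ref{max on atoms}. The only input needed there is the $L^{p_0}$-boundedness of $M$, and here we replace it by the $L^{p_0}$-boundedness of the maximal singular integrals $T_I^*$: the kernels $X^I\rho^{-2n}$ with $d(I)=2$ are homogeneous Calder\'on--Zygmund kernels of degree $-Q$ on $\mathbb{H}^n$, as used in \cite{rocha3}.

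\emph{Assembling.} These three estimates combine to
\[
\Big\|\sum_j\lambda_jN_{q,2}(\widetilde b_j;\cdot)\Big\|_\Phi\lesssim\|f\|_{H^\Phi}.
\]
In particular the series $\sum_j\lambda_jN_{q,2}(\widetilde b_j;z)$ is finite a.e. Lemma \ref{series in Eqk} then shows that $\sum_j\lambda_j\widetilde b_j$ converges in $E^q_1$ to some $G$ with $N_{q,2}(G;\cdot)\le\sum_j\lambda_jN_{q,2}(\widetilde b_j;\cdot)$. Hence $\|G\|_{\mathcal{H}^\Phi_{q,2}}\lesssim\|f\|_{H^\Phi}$.

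\textbf{Step 4: identifying $\mathcal{L}G$.} It remains to show $\mathcal{L}G=f$. Lemma \ref{series in Eqk}(ii) gives a representative $g=\sum_j\lambda_j g_j$, converging in $L^q_{loc}$, where $g_j=b_j-p_j$ with $p_j\in\mathcal{P}_1$. I would show that this series also converges in $\mathcal{S}'$. The route is to use the bound $\eta_{q,2}(g_j;z_0)=N_{q,2}(\widetilde b_j;z_0)$, summable at a fixed $z_0$, together with the growth estimate $|g_j|_{q,B(z_0,r)}\le r^2\eta_{q,2}(g_j;z_0)$. Pairing with a Schwartz function $\varphi$ and integrating over dyadic annuli shows that $\sum_j\lambda_j|\langle g_j,\varphi\rangle|$ converges, as in Proposition \ref{g distrib}. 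Then
\[
\mathcal{L}g=\sum_j\lambda_j\mathcal{L}b_j=\sum_j\lambda_j a_j=f,
\]
using $\mathcal{L}b_j=a_j$ (Theorem \ref{fund solution}) and the fact that $\mathcal{L}$ kills $\mathcal{P}_1$. This gives surjectivity and the left-hand inequality, with $c_1$ coming from $\|G\|\lesssim\|f\|_{H^{\Phi}}$ and injectivity.

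I expect the main obstacles to be this $\mathcal{S}'$ convergence and the handling of the $T_I^*$ term.
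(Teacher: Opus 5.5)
Your proposal is correct in outline, and Steps 1--3 are the paper's argument: the upper bound from Proposition~\ref{Lg dist} and Theorem~\ref{norm comparable}, injectivity from Theorem~\ref{sub-lap inject}, and the bound $\|\sum_j\lambda_j N_{q,2}(\widetilde b_j;\cdot)\|_\Phi\lesssim\|f\|_{H^\Phi}$ via the atomic decomposition, Proposition~\ref{pointwise estimate}, and Proposition~\ref{max on atoms} for both $M$ and $T_I^*$.

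\textbf{Step 4 is a genuinely different route.} The paper uses $\kappa_\Phi(\sum_{j>J}\lambda_jN_{q,2}(\widetilde b_j;\cdot))\to0$ and Lemma~\ref{modular 2} to get convergence of the partial sums to $G$ in $\mathcal{H}^\Phi_{q,2}$. It then transports this through the continuity of $\mathcal{L}$ from Step 1. That tacitly identifies the $H^\Phi$-limit of $\sum_j\lambda_ja_j$ with its $\mathcal{S}'$-limit $f$. You instead prove absolute convergence of the distinguished representatives in $\mathcal{S}'$, from $|\langle g_j,\varphi\rangle|\le C_{z_0}\|\varphi\|_{\mathcal{S}(\mathbb{H}^{n}),N}\,\eta_{q,2}(g_j;z_0)$ with $N>Q+2$ (H\"older on dyadic annuli, as for Proposition~\ref{g distrib}). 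You then apply $\mathcal{L}$ termwise.

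This works and is more elementary. It needs finiteness of the series at one point only, and no embedding $H^\Phi\hookrightarrow\mathcal{S}'$. The paper's route additionally yields norm convergence of $\sum_j\lambda_j\widetilde b_j$ in $\mathcal{H}^\Phi_{q,2}$. Deriving the lower bound for an arbitrary $G$ from injectivity is also correct; the paper uses this tacitly.

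\textbf{The first-term estimate has to be repaired.} With the paper's convention $\Phi_s(t)=\Phi(t^s)$, the function of lower type $s\,p_\Phi^->1$ is $\Phi_s$, not $\Phi_{1/s}$. Lemma~\ref{fits} then gives $\|h\|_\Phi=\|h^{1/s}\|^{s}_{\Phi_s}$.

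To write $\sum_jc_j(M\chi_{B_j})^s$, with $c_j=\lambda_j\|\chi_{B_j}\|_\Phi^{-1}$, as $\sum_j(Mf_j)^s$, you need $f_j=c_j^{1/s}\chi_{B_j}$. Theorem~\ref{Feff-Stein max ineq} with $r=s$ then gives $\|\sum_jc_j\chi_{B_j}\|_\Phi$, which is an $\ell^1$-sum. The $\ell^s$-expression you wrote is not a valid bound. Take $K$ copies of one ball $B$ with equal coefficients $c$: the left side is at least $Kc\|\chi_B\|_\Phi$, but your right side is $K^{1/s}c\|\chi_B\|_\Phi$. Your conclusion still survives, because $\ell^\theta\subset\ell^1$ for $\theta\le1$.

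Your case split is also unnecessary, and its second branch is wrong. One has $s=\frac{2}{Q}+\frac1q>1$ if and only if $q<\frac{n+1}{n}$, which is exactly where this hypothesis enters. Moreover, Theorem~\ref{Feff-Stein max ineq} requires $r>1$, so ``if $s<1$, take $\theta=s$'' could not be carried out anyway.
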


\begin{proof} The injectivity of the sub-laplacion $\mathcal{L}$ in $\mathcal{H}^{\Phi}_{q, 2}(\mathbb{H}^{n})$ was proved in 
Theorem \ref{sub-lap inject}. Now, let $G \in \mathcal{H}^{\Phi}_{q, \, 2}(\mathbb{H}^{n})$, since $N_{q, 2}(G; z)$ is finite 
$\text{a.e.} \,\, z \in \mathbb{H}^{n}$, by $(i)$ in Lemma \ref{puntual 1} and Proposition \ref{g distrib} the unique 
representative $g$ of $G$ (which depends on $z$), satisfying $\eta_{q, 2}(g; z) = N_{q, 2}(G; z)$, is a function in 
$L^{q}_{loc}(\mathbb{H}^{n}) \cap \mathcal{S}'(\mathbb{H}^{n})$. In particular, for a radial function $\phi \in \mathcal{S}(\mathbb{H}^n)$ with $\int \phi = 1$, by Remark \ref{LG definition} and Proposition \ref{Lg dist} we get
\[
M_{\phi}^{dis}(\mathcal{L}G)(z) \leq C \| \phi \|_{\mathcal{S}(\mathbb{H}^{n}), N}  \,\,\, N_{q, \, 2}(G; z).
\]
Then, this inequality and Theorem \ref{norm comparable} give $\mathcal{L}G \in H^{\Phi}(\mathbb{H}^{n})$ and 
\begin{equation} \label{continuity}
\| \mathcal{L}G \|_{H^{\Phi}(\mathbb{H}^{n})} \leq C \, \| G \|_{\mathcal{H}^{\Phi}_{q, \, 2}(\mathbb{H}^{n})}.
\end{equation}
This proves the continuity of sublaplacian $\mathcal{L}$ from $\mathcal{H}^{\Phi}_{q, \, 2}(\mathbb{H}^{n})$ into $H^{\Phi}(\mathbb{H}^{n})$.

Next, we shall see that the operator $\mathcal{L}$ is onto. Given $f \in H^{\Phi}(\mathbb{H}^{n})$, by Theorem \ref{infinite atomic decomp}, there exist a sequence of nonnegative numbers $\{ \lambda_j \}_{j=1}^{\infty}$ and a sequence of $\rho$ - balls $\{B_j \}_{j=1}^{\infty}$ and 
$(\Phi, \infty, m)$ - atoms $a_j$ supported on $B_j$, such that $f= \sum_{j=1}^{\infty} \lambda_j a_j$ and
\begin{equation} \label{atomic ineq}
\left\Vert \left\{ \sum_{j} \left( \frac{\lambda_j \chi_{B_j}}{\Vert \chi_{B_j} \Vert_{\Phi}} \right)^{\theta} \right\}^{1/\theta} 
\right\Vert_{\Phi} \lesssim \|f \|_{H^{\Phi}(\mathbb{H}^{n})}, \,\,\,\,\,\,\,\, \forall \,\,\, 0 < \theta < \min \{ 1, i(\Phi) \}.
\end{equation}
For each $j \in \mathbb{N}$ we put $b_j(z)= (a_j \ast c_n \rho^{-2n})(z) = \int_{\mathbb{H}^{n}} c_n \rho(w^{-1} \cdot z)^{-2n} a_j(w) dw$, from Proposition \ref{pointwise estimate} we have
\[
N_{q, 2} \left(\widetilde{b}_j; \, z \right) \lesssim \| \chi_B \|^{-1}_{\Phi} \left[(M \chi_{B_j})(z) \right]^{\frac{2 + Q/q}{Q}} + 
\chi_{4 \beta^2 B_j}(z) (M a_j)(z) 
\]
\[
+ \chi_{4 \beta^2 B_j}(z) \sum_{d(I)=2} (T^{*}_{I} a_j)(z),
\]
so
\begin{eqnarray*}
\sum_{j=1}^{\infty} \lambda_j N_{q, 2} \left(\widetilde{b}_j; \, z \right) &\lesssim& \sum_{j=1}^{\infty} \lambda_j \| \chi_B \|^{-1}_{\Phi} 
\left[(M \chi_{B_j})(z) \right]^{\frac{2 + Q/q}{Q}} \\
&+& \sum_{j=1}^{\infty} \lambda_j \chi_{4 \beta^2 B_j}(z) (M a_j)(z) \\
&+& \sum_{j=1}^{\infty} \lambda_j \chi_{4 \beta^2 B_j}(z) \sum_{d(I)=2} (T^{*}_{I} a_j)(z) \\
&=:& J_1 + J_2 + J_3.
\end{eqnarray*}
To study $J_1$, by hypothesis, we have that $i(\Phi) > Q \, (2 + \frac{Q}{q})^{-1}$. Then, by Lemma \ref{fits},
\begin{eqnarray*}
\|J_1\|_{\Phi} & = & \left\| \sum_{j=1}^{\infty} \frac{\lambda_j}{\| \chi_B \|_{\Phi}} M(\chi_{B_j})(\cdot)^{\frac{2 + Q/q}{Q}} 
\right\|_{\Phi} \\
& = & \left\| \left\{ \sum_{j=1}^{\infty} \frac{\lambda_j}{\| \chi_B \|_{\Phi}} M(\chi_{B_j})(\cdot)^{\frac{2 + Q/q}{Q}} 
\right\}^{\frac{Q}{2 + Q/q}} \right\|_{\Phi_{\frac{2 + Q/q}{Q}}}^{\frac{2 + Q/q}{Q}}
\end{eqnarray*}
\begin{eqnarray*}
& \lesssim & \left\| \left\{ \sum_{j=1}^{\infty} \frac{\lambda_j}{\| \chi_B \|_{\Phi}} \chi_{B_j}  \right\}^{\frac{Q}{2 + Q/q}} 
\right\|_{\Phi_{\frac{2 + Q/q}{Q}}}^{\frac{2 + Q/q}{Q}} \\
& = & \left\| \sum_{j=1}^{\infty} \frac{\lambda_j}{\| \chi_B \|_{\Phi}} \chi_{B_j} \right\|_{\Phi} \\
& \leq & \left\| \left\{ \sum_{j=1}^{\infty} \left( \frac{\lambda_j}{\| \chi_B \|_{\Phi}} \chi_{B_j} \right)^{\theta} 
\right\}^{1/\theta}\right\|_{\Phi} \lesssim \|f \|_{H^{\Phi}(\mathbb{H}^{n})},
\end{eqnarray*}
where the first inequality follows from Lemma \ref{lower upper fits} and Theorem \ref{Feff-Stein max ineq}, 
since $i(\Phi) > Q \, (2 + \frac{Q}{q})^{-1}$ and $(2 + \frac{Q}{q})/Q > 1$, the embedding 
$\ell^{\theta}(\mathbb{N}) \hookrightarrow \ell^{1}(\mathbb{N})$ gives the second inequality, 
and (\ref{atomic ineq}) gives the last one.

To estimate $J_2$, we apply Proposition \ref{max on atoms} with $r = 4 \beta^2 \geq 1$, Remark \ref{atomo infinito} and 
(\ref{atomic ineq}). Then, we obtain
\[
\| J_2 \|_{\Phi} \lesssim \|f \|_{H^{\Phi}(\mathbb{H}^{n})}.
\]

To study $J_3$, by Theorem 3 in \cite{Folland} and Corollary 2, p. 36, in \cite{Elias} (see also {\bf 2.5}, p. 11, in \cite{Elias}), 
we have, for every multi-index $I$ with $d(I) = 2$, that the operator $T_{I}^{*}$ is bounded on $L^{p_0}(\mathbb{H}^n)$ for each 
$1 < p_0 < \infty$. So, Proposition \ref{max on atoms} holds with the operators $\{ T_{I}^{*} \}_{d(I) = 2}$ instead of $M$ and with the $(\Phi, \infty, m)$ - atoms $a_j$ (see Remark \ref{atomo infinito}). Then, by 
(\ref{atomic ineq}), we get
\[
\| J_3 \|_{\Phi} \lesssim \|f \|_{H^{\Phi}(\mathbb{H}^{n})}.
\]

Thus,
\[
\left\| \sum_{j=1}^{\infty} \lambda_j N_{q, 2} \left( \widetilde{b}_j; \, \cdot \right) \right\|_{\Phi} \lesssim 
\|f \|_{{H^{\Phi}(\mathbb{H}^n)}}.
\]
By Lemma \ref{modular 1}, $\kappa_{\Phi}\left( \sum_{j=1}^{\infty} \lambda_j N_{q, 2}(\widetilde{b}_j; \cdot) \right) < \infty$. Hence
\begin{equation}
\sum_{j=1}^{\infty} \lambda_j N_{q, 2}(\widetilde{b}_j; \, z) < \infty \,\,\,\,\,\, \text{a.e.} \, z \in \mathbb{H}^{n} \label{Nq}
\end{equation}
and
\begin{equation}
\kappa_{\Phi}\left( \sum_{j=M+1}^{\infty} \lambda_j N_{q, 2}(\widetilde{b}_j; \, \cdot) \right) \rightarrow 0, \,\,\,\, \text{as} \,\,  M \rightarrow \infty  \label{Nq2}.
\end{equation}
From (\ref{Nq}) and Lemma \ref{series in Eqk}, there exists a function $G$ such that $\sum_{j=1}^{\infty} \lambda_j \widetilde{b}_j = G$ 
in $E^{q}_{1}$ and
\[
N_{q, 2} \left( \left(G - \sum_{j=1}^{M} \lambda_j \widetilde{b}_j \right) ; \, z \right) \leq c \, 
\sum_{j=M+1}^{\infty} \lambda_j N_{q, 2}(\widetilde{b}_j; z).
\]
This estimate together with (\ref{Nq2}) and Lemma \ref{modular 2} implies
\[
\left\| G - \sum_{j=1}^{M} \lambda_j \widetilde{b}_j \right\|_{\mathcal{H}^{\Phi}_{q,2}} \rightarrow 0, \,\,\,\, \text{as} \,\,  M \rightarrow \infty.
\]
So $G \in \mathcal{H}^{\Phi}_{q,2}(\mathbb{H}^{n})$ and, by Proposition \ref{cerrado}, $G = \sum_{j=1}^{\infty} \lambda_j \widetilde{b}_j$ 
in $\mathcal{H}^{\Phi}_{q,2}(\mathbb{H}^{n})$. Since $\mathcal{L}$ is a continuous operator from $\mathcal{H}^{\Phi}_{q,2}(\mathbb{H}^{n})$ into $H^{\Phi}(\mathbb{H}^{n})$, we get
\[
\mathcal{L}G = \sum_j \lambda_j \mathcal{L} \widetilde{b}_j = \sum_j \lambda_j a_j = f,
\]
in $H^{\Phi}(\mathbb{H}^{n})$. This shows that $\mathcal{L}$ is onto $H^{\Phi}(\mathbb{H}^{n})$. Moreover,
\begin{equation} \label{continuity 2}
\| G\|_{\mathcal{H}^{\Phi}_{q,2}} = \left\| \sum_{j=1}^{\infty} \lambda_j \widetilde{b}_j \right\|_{\mathcal{H}^{\Phi}_{q,2}} \lesssim 
\left\| \sum_{j=1}^{\infty} \lambda_j N_{q, 2}(\widetilde{b}_j; \cdot) \right\|_{\Phi} \lesssim \|f \|_{H^{\Phi}} = 
\| \mathcal{L} G \|_{H^{\Phi}}.
\end{equation}
Finally, (\ref{continuity}) and (\ref{continuity 2}) give (\ref{doble ineq}), and so the proof is concluded.
\end{proof}

We shall now see that the case $0 < I(\Phi) < Q \, (2 + \frac{Q}{q})^{-1}$ is trivial.

\begin{theorem} \label{2nd thm}
If $1 < q < \frac{n+1}{n}$ and $\Phi$ is an Orlicz function with $0 < I(\Phi) < Q \, (2 + \frac{Q}{q})^{-1}$, then 
$\mathcal{H}^{\Phi}_{q, \, 2}(\mathbb{H}^{n}) = \{ 0 \}.$
\end{theorem}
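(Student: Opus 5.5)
Take $G \in \mathcal{H}^{\Phi}_{q,2}(\mathbb{H}^n)$ and show $G = 0$. The idea, as in the classical argument of Gatto--Jim\'enez--Segovia and in \cite{rocha3}, is that $N_{q,2}(G;\cdot)$ has a pointwise lower bound that decays too slowly at infinity to be $\Phi$-integrable when the upper type is small.

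First, by Lemma \ref{puntual 1}, pick $z_0$ with $N_{q,2}(G;z_0)<\infty$ and the unique representative $g$ with $\eta_{q,2}(g;z_0)=N_{q,2}(G;z_0)$. Suppose $g$ is not a polynomial of degree at most $1$ (i.e.\ $G\neq 0$). Then there is a $\rho$-ball $B=B(e,R)$ with $\|G\|_{q,B}>0$.

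For $z$ with $\rho(z)\geq R$, we have $B\subset B(z,2\rho(z))$. Hence, for any representative $h$ of $G$,
\[
\eta_{q,2}(h;z)\geq (2\rho(z))^{-2}\,|h|_{q,B(z,2\rho(z))}\geq c\,\rho(z)^{-2}\Big(\tfrac{|B|}{|B(z,2\rho(z))|}\Big)^{1/q}|h|_{q,B}\gtrsim \rho(z)^{-2-Q/q}\,\|G\|_{q,B}.
\]
Taking the infimum over $h$ gives
\[
N_{q,2}(G;z)\geq c_0\,\rho(z)^{-(2+Q/q)}\quad\text{for }\rho(z)\geq R,
\]
with $c_0=c\|G\|_{q,B}>0$. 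This pointwise estimate is the key step, and it is straightforward.

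Next, choose $p$ with $I(\Phi)<p<Q(2+Q/q)^{-1}$, so that $\Phi$ is of upper type $p$. Then Lemma \ref{lower estim}(ii) gives $\Phi(t)\gtrsim t^{p}$ for $0<t\leq 1$. Enlarging $R$ so that $c_0\rho(z)^{-(2+Q/q)}\leq 1$ on $\{\rho\geq R\}$, and using that $\Phi$ is nondecreasing,
\[
\int_{\mathbb{H}^n}\Phi(N_{q,2}(G;z))\,dz\gtrsim\int_{\rho(z)\geq R}\rho(z)^{-p(2+Q/q)}\,dz=\infty,
\]
since $p(2+Q/q)<Q$; this follows in polar coordinates via (\ref{homog dim}). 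On the other hand, Lemma \ref{modular 1} (which uses upper type) shows that $N_{q,2}(G;\cdot)\in L^\Phi$ forces this modular to be finite. This is a contradiction, so $g\in\mathcal{P}_1$, i.e.\ $G=0$. Note that neither $\lambda$-scaling issues nor lower type are needed here.

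The main point to check is the pointwise lower bound. In particular, one must make sure that the infimum over representatives is uniform. This holds because $\|G\|_{q,B}$ is defined as exactly that infimum, and it is positive: if it were $0$, a minimizing sequence of polynomials in the finite-dimensional space $\mathcal{P}_1$ would converge, forcing $g|_B\in\mathcal{P}_1$. One then needs $g\in\mathcal{P}_1$ globally, which can be arranged by choosing $R$ large enough that $g|_{B(e,R)}\notin\mathcal{P}_1$. Such an $R$ exists if $g\notin\mathcal{P}_1$, since the restrictions to increasing balls determine the polynomial.
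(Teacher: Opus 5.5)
Your proof is correct and takes the same route as the paper. Both arguments derive the lower bound $N_{q,2}(G;z)\gtrsim \rho(z)^{-(2+Q/q)}$ for large $\rho(z)$ from the inclusion $B(e,R)\subset B(z,2\rho(z))$. Both then apply Lemma \ref{lower estim}(ii) with an upper type below $Q(2+Q/q)^{-1}$, so the modular diverges, contradicting Lemma \ref{modular 1}. Your finite-dimensionality argument that $\|G\|_{q,B}>0$ for a suitable ball also fills in the step the paper dismisses as ``easy to check''.
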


\begin{proof} Let $G \in \mathcal{H}^{\Phi}_{q, \, 2}(\mathbb{H}^{n})$ and assume $G \neq 0$. Then there exists $g \in G$ 
that is not a polynomial of homogeneous degree less or equal to $1$. It is easy to check that there exist a positive constant 
$c \in (0, 1]$ and a $\rho$ - ball $B = B(e, r)$ with $r > 1$ such that
\[
\int_{B} |g(w) - P(w)|^{q} \, dw \geq c > 0,
\]
for every $P \in \mathcal{P}_{1}$.

Let $z$ be a point such that $\rho(z) > r$ and let $\delta = 2 \rho(z)$. Then $B(e, r) \subset B(z, \delta)$. If $h \in G$, then 
$h = g - P$ for some $P \in \mathcal{P}_{1}$ and
\[
\delta^{-2}|h|_{q, B(z, \delta)} \geq c \rho(z)^{-2-Q/q}.
\]
So $N_{q,2}(G; \, z) \geq c \, \rho(z)^{-(2+Q/q)}$, for $\rho(z) > r$. Since $\Phi$ is an Orlicz function of positive upper type 
$p_{\Phi}^{+}$ with $I(\Phi) \leq p_{\Phi}^{+} < Q(2+Q/q)^{-1}$ and $0 < c \, \rho(z)^{-(2+Q/q)} < 1$ on $\rho(z) > r$, by 
Lemma \ref{lower estim} - (ii), we have that
\begin{eqnarray*}
\int_{\mathbb{H}^n} \Phi\left( N_{q,2}(G; z) \right) dz &\geq& \, \int_{\rho(z) > r} \Phi\left( c \rho(z)^{-(2+Q/q)} \right) \, dz \\
&\gtrsim& \int_{\rho(z) > r} \rho(z)^{-(2+Q/q)p^{+}_{\Phi}} \, dz = \infty,
\end{eqnarray*}
which gives a contradiction (see Lemma \ref{modular 1}). Thus $\mathcal{H}^{\Phi}_{q, 2}(\mathbb{H}^{n}) = \{0\}$, 
if $0 < I(\Phi) < Q(2+Q/q)^{-1}$.
\end{proof}

Pablo Rocha, Instituto de Matem\'atica (INMABB), Departamento de Matem\'atica, Universidad Nacional del Sur (UNS)-CONICET, Bah\'ia Blanca, Argentina. \\
{\it e-mail:} pablo.rocha@uns.edu.ar

\end{document}